\definecolor{darkblue}{rgb}{0,0,0.7}
\newcommand{\ip}[2]{\left\langle {#1}, {#2} \right\rangle}
\newcommand{\bdsb}[1]{\boldsymbol{#1}}
\newtheorem{lem}{Lemma}
\newtheorem{corollary}{Corollary}
\newtheorem{thm}{Theorem}
\newtheorem{assumption}{Assumption}
\title{Convergence and Applications of a Gossip-based Gauss-Newton Algorithm}
\author{Xiao Li, {\it Student Member, IEEE},  and Anna Scaglione, {\it Fellow, IEEE}
\thanks{This work was supported by the TCIPG project sponsored by Department of Energy under the Award DE-OE0000097.}
\thanks{%
  The authors are with the
  Department of Electrical and Computer Engineering,
  University of California, Davis,
  One Shields Avenue, Kemper Hall,
  Davis, California 95616-5294
  (email : \{eceli,ascaglione\}@ucdavis.edu).}
}
\begin{document}

\maketitle

\graphicspath{{./figure/}}

\begin{abstract}
The Gauss-Newton algorithm is a popular and efficient centralized method for solving non-linear least squares problems. In this paper, we propose a multi-agent distributed version of this algorithm, named Gossip-based Gauss-Newton (GGN) algorithm, which can be applied in general problems with non-convex objectives. Furthermore, we analyze and present sufficient conditions for its convergence and show numerically that the GGN algorithm achieves performance comparable to the centralized algorithm, with graceful degradation in case of network failures. More importantly, the GGN algorithm provides significant performance gains compared to other distributed first order methods.
\end{abstract}

\begin{keywords}
Gauss-Newton, gossip, distributed, convergence
\end{keywords}

\vspace{-0.6cm}
\section{Introduction}

Numerical algorithms for solving non-linear least squares (NLLS) problems are well studied and understood \cite{nocedal1999numerical}. Popular methods are the so called {\it Newton} and {\it Gauss-Newton} algorithms.  Newton algorithms are second order methods that use the Hessian of the objective function to stabilize and accelerate local convergence \cite{dennis1996numerical,boyd2004convex}, while Gauss-Newton simplifies the computation of the Hessian particularly for NLLS problems by ignoring the higher order derivatives \cite{bjorck1996numerical}. The Gauss-Newton algorithm is commonly used for power systems state estimation \cite{monticelli2000electric}, localization \cite{mensing2006positioning}, frequency estimation \cite{stoica1989maximum}, Kalman filtering \cite{bell1993iterated}, medical imaging \cite{schweiger2005gauss}. Given the fact that for some of these problems the data are acquired over a wide area, in this paper we are interested in the decentralized implementation of the Gauss-Newton algorithm in a network, via {\it gossiping}.
%
%
Since their introduction \cite{tsitsiklis1984problems}, gossip algorithms have been extensively investigated \cite{karp2000randomized,olfati2004consensus}, as surveyed in \cite{dimakis2010gossip}. Deterministic and randomized protocols for gossip algorithms with synchronous or asynchronous updates have been further studied \cite{kempe2003gossip,boyd2006randomized} and applied in different areas in networked control and distributed signal processing, such as distributed Kalman filtering \cite{olfati2007consensus} or convex optimization problems \cite{johansson2008subgradient}.

Our work is closely related with the recent developments in the area of distributed optimization via {\it network diffusion}, which evolved from the incremental methods in \cite{bertsekas1997new,nedic2001incremental} and gossip-based sub-gradient algorithms in \cite{johansson2008subgradient} onto fully decentralized and randomized algorithms. The distributed algorithms analyzed in \cite{nedic2009distributed,ram2010distributed,nedich2010asynchronous,srivastava2011distributed,kar2008distributed} tackle convex optimization problems through either synchronous or asynchronous communications. These techniques combine a local descent step with a network diffusion step. The convergence of these diffusion algorithms typically requires {\it convexity} and a diminishing step-size, which results in slow convergence in general \cite{matei2011performance}. Recently, \cite{chen2011diffusion} assumes {\it local strong convexity} and proposes a diffusion optimization scheme for general convex problems by using stochastic gradients with a constant step-size. Furthermore, the convergence analysis of network diffusion algorithms has also been developed for adaptive formulations using a constant step-size for linear filtering problems \cite{lopes2008diffusion,cattivelli2010diffusion,cattivelli2008diffusion}, or using a diminishing step-size for non-linear invertible systems \cite{kar2008distributed}. Despite the simplicity of first order methods in diffusion algorithms, they generally suffer from slow convergence in contrast to Newton-type algorithms.

Recently, a gossip-based Newton method was derived in \cite{wei2010distributed} to solve network utility maximization problems and later applied to power flow estimation \cite{ilic2012toward}. The algorithm relies on the diagonal structure of the Hessian matrix and its convergence  is proven under the hypothesis that the error of the computed Newton descent is bounded.  In addition, the method is developed specifically for {\it strictly convex} problems, where the variables are completely separable for each distributed agent (i.e., its Hessian is block diagonal), while NLLS problems are oftentimes non-convex and non-separable. Although there have been some ad-hoc applications of the Gauss-Newton methods via network average consensus in sensor networks \cite{bejar2010distributed,cheng2005distributed,calafiore2010distributed} or incremental methods in acoustic sources localization \cite{zhao2007information} that relax these assumptions, a thorough study of the algorithm performance in the general case is still missing.


Motivated by this background, in this paper, we propose and study the performance of the Gossip-based Gauss-Newton (GGN) algorithm, for general NLLS problems that are non-separable and non-convex.
We also showcase its performance in power system state estimation (PSSE) \cite{schweppe1974static,larson1970state} for system monitoring and control. Recently, the development of distributed PSSE schemes has received considerable attention \cite{brice1982multiprocessor,kurzyn1983real,yang2011transition,gomez2011multilevel,falcao1995parallel,lin1992distributed,ebrahimian2000state,van1981two,zhao2005multi,jiang2007distributed} to achieve wide area awareness in the expanding power grid. Most of these algorithms hierarchically aggregate the information from distributed control areas under the assumption that there are redundant measurements available at each area to uniquely identify the local state variables (i.e., local observability). Such condition is not required by the GGN algorithm in this paper, similar to the recent works in \cite{xie2012fully,kekatos2012distributed}. In comparison, the proposed GGN algorithm is very different in terms of the network communications and algorithm convergence. The method in \cite{xie2012fully} is motivated by the diffusion algorithm in \cite{kar2008distributed} (similar to \cite{nedic2009distributed} in an adaptive setting), which is a first order sub-gradient method. On the other hand, our approach converges much faster and our communication model is more flexible and robust. The authors in \cite{kekatos2012distributed} used the {\it Alternating Direction Method of Multipliers} (ADMM) to distribute the state estimation procedure by decomposing the state variables in different areas so that each agent estimates a local state. This is in contrast to the global state considered in this paper. Furthermore, the communications entailed by ADMM is constrained by the power grid topology, while the communication model considered in this paper is decoupled from the grid topology and more flexible in terms of network reconfigurations and random failures. Also, the numerical tests in \cite{kekatos2012distributed} are based exclusively on a linear model using Phasor Measurement Unit (PMU) data, while the algorithm convergence in general is not discussed.

The challenge associated with PSSE  is the presence of multiple stationary points due to the non-convexity of the NLLS objective. This fact confirms the importance of deriving the sufficient conditions for the convergence of the GGN, provided in this paper. These conditions indicate how close the algorithm needs to be initialized around the global minimizer in order to converge to it. The criterion has practical implications in the power grid application, since it can be met by deploying judiciously PMUs (see \cite{li2013optimal}). In the simulations, we show how our GGN algorithm performs compared to the PSSE diffusion algorithms in \cite{xie2012fully} and \cite{kar2008distributed} in an adaptive setting with streaming data.

{\bf Synopsis:} In Section \ref{problem_statement}, we define the NLLS problems and provide the distributed NLLS formulation in a network. Then, the proposed GGN algorithm is described in detail in Section \ref{GGN} and its convergence analysis follows in Section \ref{convergence_analysis}. We formulate the PSSE application in Section \ref{PSSE} as a NLLS problem and solve it using the proposed GGN algorithm. Finally, the convergence and performance of the GGN algorithm is demonstrated for PSSE problems in Section \ref{numerical_results}.

{\bf Notation:} We denote vectors (matrices) by boldface lower-case (upper-case) symbols, and the set of real (complex) numbers by $\mathbb{R}$ ($\mathbb{C}$). The magnitude of a complex number $x$ is denoted by $|x|=\sqrt{xx^\ast}$, where $x^\ast$ is the conjugate of $x$. The transpose, conjugate transpose, and inverse of a non-singular matrix $\mathbf{X}$ are denoted by $\mathbf{X}^T$, $\mathbf{X}^H$ and $\mathbf{X}^{-1}$, respectively. The inner product between two vectors $\mathbf{x},\mathbf{y}\in\mathbb{C}^{N\times 1}$ is defined accordingly as $\ip{\mathbf{x}}{\mathbf{y}} = \sum_{n=1}^N y_n^\ast x_n$. The $\mathbf{W}$-weighted  Euclidean norm of a vector $\mathbf{x}$ is denoted by $\left\|\mathbf{x}\right\|_{\mathbf{W}}=\sqrt{\mathbf{x}^H\mathbf{W}\mathbf{x}}$, and the conventional Euclidean norm is written as $\left\|\mathbf{x}\right\|$. The $2$-norm of a matrix $\mathbf{A}$ is denoted by $\|\mathbf{A}\|$ and the {\it Frobenius} norm is denoted by $\|\mathbf{A}\|_F$. Given a matrix $\mathbf{A}=[\mathbf{a}_1,\cdots,\mathbf{a}_N]$ where $\mathbf{a}_n$ is a column vector, the vectorization operator is defined as $\mathrm{vec}(\mathbf{A})=[\mathbf{a}_1^T,\cdots,\mathbf{a}_N^T]^T$.

\section{Problem Statement}\label{problem_statement}
Let $\mathbf{x}\in\mathbb{R}^N$ be an unknown parameter vector associated with a specific network, belonging to a compact convex set $\mathbb{X}$. The network objective is described by a vector-valued continuously differentiable function $\mathbf{g}(\mathbf{x})=[g_1(\mathbf{x}),\cdots,g_M(\mathbf{x})]^T$ with $M$ outputs, defined as $g_m : \mathbb{R}^N \rightarrow \mathbb{R}$, $m=1,\cdots,M$. Note that $\{g_m\}_{m=1}^M$ are not necessarily convex. Then, a non-linear least squares (NLLS) problem for the network is
\begin{align}\label{centralized_NLLS}
	\underset{\mathbf{x}\in\mathbb{X}}{\min}~~ \|\mathbf{g}(\mathbf{x})\|^2.
\end{align}
Throughout this paper, we assume the following about \eqref{centralized_NLLS}:

\begin{assumption}\label{lipshitz}
~
\begin{enumerate}
    \item The vector function is continuous, differentiable, and bounded for $\mathbf{x}\in\mathbb{X}$ with
        \begin{align}\label{epsilon_max}
        \|\mathbf{g}(\mathbf{x})\| \leq  \epsilon_{\max} .
        \end{align}
    \item The $M\times N$ Jacobian $\mathbf{G}(\mathbf{x})={\partial \mathbf{g}(\mathbf{x})}/{\partial \mathbf{x}^T}$ is full-column rank for all $\mathbf{x}\in\mathbb{X}$. Denote by $\lambda_{\min}(\cdot)$ and $\lambda_{\max}(\cdot)$ the minimum and maximum eigenvalues and let
		\begin{align*}
			\sigma_{\min} &= \underset{\mathbf{x}\in\mathbb{X}}{\min}~\sqrt{\lambda_{\min}\left(\mathbf{G}^T(\mathbf{x})\mathbf{G}(\mathbf{x})\right)},\\
			\sigma_{\max} &= \underset{\mathbf{x}\in\mathbb{X}}{\max}~\sqrt{\lambda_{\max}\left(\mathbf{G}^T(\mathbf{x})\mathbf{G}(\mathbf{x})\right)},
		\end{align*}
	    with $0<\sigma_{\min}\leq\sigma_{\max}<\infty$.
    \item The Jacobian $\mathbf{G}(\mathbf{x})$ satisfies the Lipschitz condition
		\begin{align*}
			\left\|\mathbf{G}(\mathbf{x})-\mathbf{G}(\mathbf{x}')\right\|
            &\leq \omega \left\|\mathbf{x}-\mathbf{x}'\right\|,\quad \mathbf{x},\mathbf{x}'\in\mathbb{X},
		\end{align*}
		where $\omega>0$ is a Lipschitz constant.
\end{enumerate}
\end{assumption}

\subsection{Centralized Gauss-Newton Algorithm}
When data and functions are available at a central point, the Gauss-Newton method starts from some initial point $\mathbf{x}^0$ and solves the NLLS problem iteratively \cite{bjorck1996numerical}
\begin{align}\label{algorithm_update}
	\mathbf{x}^{k+1} = P_{\mathbb{X}}\left[\mathbf{x}^k-\alpha_k\mathbf{d}^k\right], \quad k=1, 2, \cdots,
\end{align}
where $\alpha_k$ is the step-size in the $k$-th iteration and $P_{\mathbb{X}}[\cdot]$ is a projection onto the constrained set $\mathbb{X}$. According to Assumption \ref{lipshitz}, the Gauss-Newton Hessian matrix $\mathbf{G}^T(\mathbf{x}^k)\mathbf{G}(\mathbf{x}^k)$ is positive definite, hence the resulting $\mathbf{d}^k$ constitutes a descent direction of the objective function
\begin{align}\label{central_descent}
	\mathbf{d}^k
	&=\left[\mathbf{G}^T(\mathbf{x}^k)\mathbf{G}(\mathbf{x}^k)\right]^{-1}
	    \mathbf{G}^T(\mathbf{x}^k)\mathbf{g}(\mathbf{x}^k),
\end{align}
where $\mathbf{G}(\mathbf{x})$ is the $M\times N$ Jacobian matrix of $\mathbf{g}(\mathbf{x})$. In this paper, we assume that fixed points always exist for the update \eqref{algorithm_update}, which corresponds to the set of the stationary points of the cost function satisfying the first order condition
\begin{align}\label{fixed_joint}
	\mathbf{G}^T (\mathbf{x}^\star)\mathbf{g}(\mathbf{x}^\star) = \mathbf{0},\quad \mathbf{x}^\star\in\mathbb{X}.
\end{align}
Note that if $\alpha_k$ is chosen differently at each iteration, the algorithm is called the {\it damped Gauss-Newton} method while $\alpha_k=\alpha$ corresponds to the {\it undamped Gauss-Newton} method. Under Assumption \ref{lipshitz}, it is well-known from \cite{nocedal1999numerical,bjorck1996numerical} that if the step-size $\alpha_k$ is chosen according to the Wolfe condition, the Gauss-Newton iteration converges to a stationary point of the cost function. Since many NLLS problems are non-convex by nature, the focus in this paper is to study the local convergence property of the algorithm to an arbitrary fixed point $\mathbf{x}^\star\in\mathbb{X}$.

\subsection{Distributed Formulation}
Although the convergence of centralized Gauss-Newton algorithms is well studied \cite{bjorck1996numerical} under Assumption \ref{lipshitz}, it is not immediately clear that similar local convergence properties can be maintained for the decentralized version. As shown in Fig. \ref{fig.proposed_architecture}, suppose there are $I$ distributed {\it agents}, and the $i$-th {\it agent} only knows a subset function $\mathbf{g}_i: \mathbb{R}^N\rightarrow \mathbb{R}^{M_i}$ from \eqref{centralized_NLLS}, i.e.
\begin{align}
	\mathbf{g}(\mathbf{x}) =[\mathbf{g}_1^T(\mathbf{x}),\ldots,\mathbf{g}_I^T(\mathbf{x})	]^T
\end{align}
with $M=\sum_{i=1}^I M_i$. In this setting, the goal is to obtain
\begin{align}\label{minimizer}
	\widehat{\mathbf{x}} = \arg\underset{\mathbf{x}\in\mathbb{X}}{\min} ~ \sum_{i=1}^I \left\|\mathbf{g}_i(\mathbf{x})\right\|^2,
\end{align}
where each agent has only partial knowledge of the global cost function. Based on Assumption \ref{lipshitz}, we have the following results on the distributed formulation.

\begin{corollary}\label{ass_lipschitz}
Let Assumption \ref{lipshitz} hold. Given that the partial Jacobian $\mathbf{G}_i(\mathbf{x})=\partial \mathbf{g}_i(\mathbf{x})/\partial\mathbf{x}^T$ is a sub-matrix of the full Jacobian $\mathbf{G}(\mathbf{x})$, then we have (cf. \cite[Corollary 3.1.3]{horntopics})
\begin{align*}
	\left\|\mathbf{G}_i(\mathbf{x})-\mathbf{G}_i(\mathbf{x}')\right\|
       &\leq \omega \left\|\mathbf{x}-\mathbf{x}'\right\|,\quad \mathbf{x},\mathbf{x}'\in\mathbb{X}.
\end{align*}
and furthermore the following conditions (cf. \cite[Theorem 12.4]{eriksson2004applied}) for arbitrary $\mathbf{x},\mathbf{x}'\in\mathbb{X}$
\begin{align*}
    \left\|\mathbf{G}_i^T(\mathbf{x})\mathbf{g}_i(\mathbf{x})-\mathbf{G}_i^T(\mathbf{x}')\mathbf{g}_i(\mathbf{x}')\right\|    &\leq \nu_{\delta}\left\|\mathbf{x}-\mathbf{x}'\right\|\\
    \left\|\mathbf{G}_i^T(\mathbf{x})\mathbf{G}_i(\mathbf{x})-\mathbf{G}_i^T(\mathbf{x}')\mathbf{G}_i(\mathbf{x}')\right\|    &\leq \nu_{\Delta}\left\|\mathbf{x}-\mathbf{x}'\right\|,
\end{align*}
where $\nu_{\delta}\geq \omega ( \epsilon_{\max}  + \sigma_{\max})$ and $\nu_{\Delta}\geq 2 \sigma_{\max} \omega$ are the associated Lipschitz constants.
\end{corollary}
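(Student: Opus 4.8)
The plan is to prove the three inequalities in order, exploiting two structural facts throughout: $\mathbf{G}_i(\mathbf{x})$ is the row-submatrix of $\mathbf{G}(\mathbf{x})$ formed by agent $i$'s $M_i$ outputs, and $\mathbf{g}_i(\mathbf{x})$ is the matching subvector of $\mathbf{g}(\mathbf{x})$. For the first bound I would note that $\mathbf{G}_i(\mathbf{x}) - \mathbf{G}_i(\mathbf{x}')$ is exactly the submatrix of $\mathbf{G}(\mathbf{x}) - \mathbf{G}(\mathbf{x}')$ obtained by retaining the same rows. The singular-value interlacing statement cited from \cite[Corollary 3.1.3]{horntopics} guarantees that deleting rows cannot increase the spectral norm, so $\|\mathbf{G}_i(\mathbf{x}) - \mathbf{G}_i(\mathbf{x}')\| \leq \|\mathbf{G}(\mathbf{x}) - \mathbf{G}(\mathbf{x}')\|$, and Assumption \ref{lipshitz}(3) closes the argument. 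The same interlacing fact supplies the two uniform bounds needed later, namely $\|\mathbf{G}_i(\mathbf{x})\| \leq \|\mathbf{G}(\mathbf{x})\| \leq \sigma_{\max}$ and $\|\mathbf{g}_i(\mathbf{x})\| \leq \|\mathbf{g}(\mathbf{x})\| \leq \epsilon_{\max}$ (the latter since a sub-block has norm no larger than the whole vector).

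Before treating the two product inequalities, I would record that $\mathbf{g}_i$ is itself Lipschitz. Since $\mathbb{X}$ is convex, the segment $\mathbf{x}' + t(\mathbf{x}-\mathbf{x}')$, $t\in[0,1]$, stays in $\mathbb{X}$, and the fundamental theorem of calculus gives $\mathbf{g}_i(\mathbf{x}) - \mathbf{g}_i(\mathbf{x}') = \int_0^1 \mathbf{G}_i(\mathbf{x}' + t(\mathbf{x}-\mathbf{x}'))(\mathbf{x}-\mathbf{x}')\,dt$; bounding the integrand by $\sigma_{\max}\|\mathbf{x}-\mathbf{x}'\|$ yields $\|\mathbf{g}_i(\mathbf{x}) - \mathbf{g}_i(\mathbf{x}')\| \leq \sigma_{\max}\|\mathbf{x}-\mathbf{x}'\|$. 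This is the ingredient not furnished directly by Assumption \ref{lipshitz}, which only asserts Lipschitz continuity of the Jacobian.

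The remaining two bounds then follow from an add-and-subtract (telescoping) step together with the triangle inequality, which is precisely the product rule for bounded Lipschitz functions invoked from \cite[Theorem 12.4]{eriksson2004applied}. For the second, writing $\mathbf{G}_i^T(\mathbf{x})\mathbf{g}_i(\mathbf{x}) - \mathbf{G}_i^T(\mathbf{x}')\mathbf{g}_i(\mathbf{x}') = \mathbf{G}_i^T(\mathbf{x})[\mathbf{g}_i(\mathbf{x}) - \mathbf{g}_i(\mathbf{x}')] + [\mathbf{G}_i^T(\mathbf{x}) - \mathbf{G}_i^T(\mathbf{x}')]\mathbf{g}_i(\mathbf{x}')$ and bounding the two terms using $\|\mathbf{G}_i\| \leq \sigma_{\max}$, the Lipschitz bound on $\mathbf{g}_i$, the first inequality, and $\|\mathbf{g}_i\| \leq \epsilon_{\max}$ produces a constant of the product-rule form (bound on $\mathbf{G}_i$)$\cdot$(Lipschitz constant of $\mathbf{g}_i$) $+$ (Lipschitz constant of $\mathbf{G}_i$)$\cdot$(bound on $\mathbf{g}_i$), which may be taken as $\nu_\delta$. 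For the third, the symmetric split $\mathbf{G}_i^T(\mathbf{x})\mathbf{G}_i(\mathbf{x}) - \mathbf{G}_i^T(\mathbf{x}')\mathbf{G}_i(\mathbf{x}') = \mathbf{G}_i^T(\mathbf{x})[\mathbf{G}_i(\mathbf{x}) - \mathbf{G}_i(\mathbf{x}')] + [\mathbf{G}_i^T(\mathbf{x}) - \mathbf{G}_i^T(\mathbf{x}')]\mathbf{G}_i(\mathbf{x}')$ bounds each summand by $\sigma_{\max}\,\omega\,\|\mathbf{x}-\mathbf{x}'\|$, giving exactly $\nu_\Delta = 2\sigma_{\max}\omega$.

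I do not expect a deep obstacle; the corollary is essentially a bookkeeping exercise. The two points that require genuine care are (a) justifying the submatrix spectral-norm inequality rather than assuming it, which is where the Horn reference does the work, and (b) extracting a Lipschitz constant for $\mathbf{g}_i$ through the integral representation, since Assumption \ref{lipshitz} supplies only the Lipschitz constant of the Jacobian. Once these are in place, the two telescoping splittings and the triangle inequality are routine.
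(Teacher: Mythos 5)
Your proof is correct and takes essentially the same route as the paper, which offers no argument of its own beyond citing the two facts you make explicit: the submatrix singular-value interlacing result (the Horn reference) for the first inequality and the uniform bounds $\|\mathbf{G}_i\|\leq\sigma_{\max}$, $\|\mathbf{g}_i\|\leq \epsilon_{\max} $, and the bounded-Lipschitz product rule (the Eriksson reference) for the other two, with your integral representation correctly supplying the missing Lipschitz constant $\sigma_{\max}$ for $\mathbf{g}_i$. One caveat worth recording: your telescoping split actually yields $\nu_\delta=\sigma_{\max}^2+\omega \epsilon_{\max} $, whereas the paper displays $\omega( \epsilon_{\max} +\sigma_{\max})$ --- the paper's expression implicitly uses $\omega$ rather than $\sigma_{\max}$ as the Lipschitz constant of $\mathbf{g}_i$, which Assumption \ref{lipshitz} does not support, so the two coincide only when $\omega=\sigma_{\max}$; since any number exceeding a valid Lipschitz constant is itself a valid Lipschitz constant, taking $\nu_\delta\geq\max\left\{\sigma_{\max}^2+\omega \epsilon_{\max} ,\;\omega( \epsilon_{\max} +\sigma_{\max})\right\}$ reconciles your (correct) derivation with the statement, and your $\nu_\Delta=2\sigma_{\max}\omega$ matches the paper exactly.
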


\begin{figure}[t]
\centering
\includegraphics[width=0.9\linewidth]{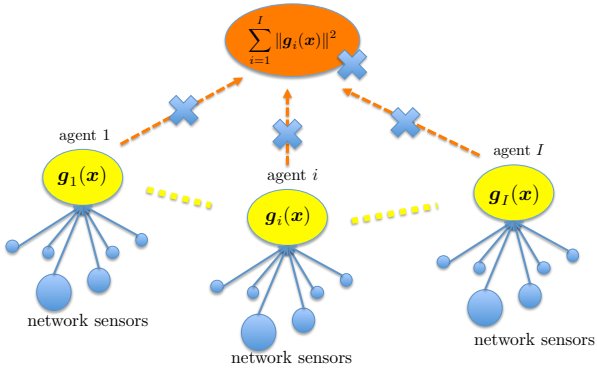}
\vspace{-0.2cm}
\caption{Schematic of multi-agent computation structure.}\label{fig.proposed_architecture}
\vspace{-0.4cm}
\end{figure}

In the distributed setting, it is difficult to coordinate the step-size at different agents to satisfy the Wolfe condition \cite{nocedal1999numerical} in a global sense. A variable step-size is also quite inconvenient, because of the difficulties of coordinating a change in the step-size across a network. As a result, we study the {\it undamped Gauss-Newton} case with a constant step-size $\alpha\in (0,1]$ i.e.
\begin{align}\label{undamped_GN}
	\mathbf{x}_i^{k+1} = P_{\mathbb{X}}\left[\mathbf{x}_i^k-\alpha\mathbf{d}_i^k\right],
\end{align}
where the exact decentralized descent is given by
\begin{align}\label{decentralized_descent}
	\mathbf{d}_i^k
	&=\left[\mathbf{G}^T(\mathbf{x}_i^k)\mathbf{G}(\mathbf{x}_i^k)\right]^{-1}
	    \mathbf{G}^T(\mathbf{x}_i^k)\mathbf{g}(\mathbf{x}_i^k).
\end{align}

According to \eqref{decentralized_descent}, each agent requires the computation of
\begin{align}
 	\mathbf{G}^T (\mathbf{x}_i^k) \mathbf{G}(\mathbf{x}_i^k) &= \sum_{j=1}^{I}\mathbf{G}_j^T(\mathbf{x}_i^k) \mathbf{G}_j(\mathbf{x}_i^k)\label{sum_Hessian}\\
	\mathbf{G}^T(\mathbf{x}_i^k)\mathbf{g}(\mathbf{x}_i^k) &= \sum_{j=1}^{I}\mathbf{G}_j^T(\mathbf{x}_i^k)\mathbf{g}_j(\mathbf{x}_i^k)\label{sum_grad},
\end{align}
while the $i$-th agent has only partial information available to compute $\mathbf{G}_i^T(\mathbf{x}_i^k) \mathbf{G}_i(\mathbf{x}_i^k)$ and $\mathbf{G}_i^T(\mathbf{x}_i^k)\mathbf{g}_i(\mathbf{x}_i^k)$. In the next section, we introduce the GGN algorithm.

\section{Gossip-based Gauss-Newton (GGN) Algorithm}\label{GGN}
The proposed GGN algorithm implements the update in \eqref{central_descent} in a fully distributed manner. There are two time scales in the GGN algorithm, one is the time for Gauss-Newton {\it update} and the other is the gossip {\it exchange} between every two Gauss-Newton updates. Throughout the rest of the paper, we consistently use {\it update} (denoted by ``$k$") for the Gauss-Newton algorithm and {\it exchange} (denoted by ``$\ell$") for network gossiping. We assume that all the network agents have a synchronous clock that determines the time instants $\tau_k$ for the $k$-th algorithm update across the network. Between two updates $[\tau_k,\tau_{k+1})$, the agents exchange information via network gossiping at time $\tau_{k,\ell}\in[\tau_k,\tau_{k+1})$ for $\ell=1,\cdots,\ell_k$. 


Next, we describe the local update model for the GGN algorithm at each distributed agent in Section \ref{update_model}, and introduce in Section \ref{gossip_model} the gossip model for every exchange $\ell=1,\cdots,\ell_k$ that takes place between every two updates.

\subsection{Local Update Model}\label{update_model}
Let $\mathbf{x}_i^k$ be the local iterate at the $i$-th agent after the $k$-th update. For convenience, let
\begin{align}\label{centralized_information}
	\mathbf{q}(\mathbf{x}_i^k) &= \frac{1}{I}\sum_{j=1}^{I}\mathbf{G}_j^T(\mathbf{x}_i^k)\mathbf{g}_j(\mathbf{x}_i^k),\\
	\mathbf{Q}(\mathbf{x}_i^k) &= \frac{1}{I}\sum_{j=1}^{I}\mathbf{G}_j^T(\mathbf{x}_i^k)\mathbf{G}_j(\mathbf{x}_i^k).
\end{align}

The ``exact descent" in \eqref{decentralized_descent}, if it were to be computed at the $i$-th agent for the $(k+1)$-th update, would be
\begin{align}\label{exact_local_descent}
	\mathbf{d}_i^k
	= \mathbf{Q}^{-1}(\mathbf{x}_i^k)\mathbf{q}(\mathbf{x}_i^k),
\end{align}
which is impossible to obtain in a distributed setting. This is because of the fact that agent $j$ is not aware of the iterate $\mathbf{x}_i^k$ at other agents $i\neq j$ as well as that each node only knows its own mapping $\mathbf{g}_j$ and $\mathbf{G}_j$. In fact, the available information at the $i$-th agent after the $k$-th Gauss-Newton update is $\mathbf{G}_i^T(\mathbf{x}_i^k)\mathbf{g}_i(\mathbf{x}_i^k)$ and $\mathbf{G}_i^T(\mathbf{x}_i^k)\mathbf{G}_i(\mathbf{x}_i^k)$. Therefore, we propose to use an average surrogate for $\mathbf{q}(\mathbf{x}_i^k)$ and $\mathbf{Q}(\mathbf{x}_i^k)$
\begin{align}\label{H_h_avg}
	\bar{\mathbf{h}}_k &=  \frac{1}{I}\sum_{i=1}^{I}\mathbf{G}_i^T(\mathbf{x}_i^k)\mathbf{g}_i(\mathbf{x}_i^k),\\
	\bar{\mathbf{H}}_k &=  \frac{1}{I}\sum_{i=1}^{I}\mathbf{G}_i^T(\mathbf{x}_i^k)\mathbf{G}_i(\mathbf{x}_i^k),
\end{align}
which can be obtained via network gossiping.

After the $k$-th update by each agent at $\tau_k$, the network enters gossip exchange stage $[\tau_k,\tau_{k+1})$ to compute the surrogate $\bar{\mathbf{h}}_k$ and $\bar{\mathbf{H}}_k$. Define the length-$N_{\mathcal{H}}$ local information vector (i.e., $N_{\mathcal{H}}=N(N+1)$) at the $i$-th agent for the $\ell$-th gossip exchange
\begin{align}\label{information_vec}
	\bdsb{\mathcal{H}}_{k,i}(\ell)
	=
	\begin{bmatrix}
		\mathbf{h}_{k,i}(\ell)\\
		\mathrm{vec}\left[\mathbf{H}_{k,i}(\ell)\right]
	\end{bmatrix},
\end{align}
with initial condition $\bdsb{\mathcal{H}}_{k,i}(0)$ given by
\begin{align}\label{h_H_definition}
	\mathbf{h}_{k,i}(0) &\triangleq \mathbf{G}_i^T(\mathbf{x}_i^k)\mathbf{g}_i(\mathbf{x}_i^k)\\
	\mathbf{H}_{k,i}(0) &\triangleq \mathbf{G}_i^T(\mathbf{x}_i^k)\mathbf{G}_i(\mathbf{x}_i^k).
\end{align}
The surrogates are the network averages of the initial conditions $\bar{\mathbf{h}}_k = \sum_{i=1}^{I}\mathbf{h}_{k,i}(0)/I$ and $\bar{\mathbf{H}}_k = \sum_{i=1}^{I}\mathbf{H}_{k,i}(0)/I$. Then, all the agents exchange their information $\bdsb{\mathcal{H}}_{k,i}(\ell)\rightarrow\bdsb{\mathcal{H}}_{k,i}(\ell+1)$ under the protocol described in Section \ref{gossip_model}.

After $\ell_k$ exchanges, the ``approximated descent" for the $(k+1)$-th update at the $i$-th agent  is
\begin{align}\label{local_descent}
	\mathbf{d}_i^k(\ell_k)
	&=  \mathbf{H}_{k,i}^{-1}(\ell_k) \mathbf{h}_{k,i}(\ell_k)
\end{align}
and the local estimate is updated as
\begin{align}\label{local_estimate}
	\mathbf{x}_i^{k+1} = P_{\mathbb{X}}\left[\mathbf{x}_i^k - \alpha\mathbf{d}_i^k(\ell_k)\right].
\end{align}

\subsection{Network Gossiping Model}\label{gossip_model}

Before describing the gossiping protocol, we first model the data exchange between different agents. We use the insights from \cite{tsitsiklis1984problems,blondel2005convergence,nedic2009distributed} and impose some rules on the agent communications over time. For each exchange, an agent $i$ communicates with its neighbor agent $j$ during $[\tau_k,\tau_{k+1})$. This is captured by a time-varying network graph $\mathcal{G}_{k,\ell}=(\mathcal{I},\mathcal{M}_{k,\ell})$ during $[\tau_{k,\ell},\tau_{k,\ell+1})$ for every GN update $k$ and gossip exchange $\ell$. The node set $\mathcal{I}=\{1,\cdots,I\}$ refers to the set of {\it agents}, and the edge set $\mathcal{M}_{k,\ell}$ is formed by the communication links in that particular gossip exchange. Associated to the graph is the adjacency matrix $\mathbf{A}_k(\ell)=[A_{ij}^{(k,\ell)}]_{I\times I}$
\begin{align}
	A_{ij}^{(k,\ell)} =
	\begin{cases}
		1, &\{i,j\}\in\mathcal{M}_{k,\ell}\\
		0, &\mathrm{otherwise}
	\end{cases}.
\end{align}

\begin{assumption}\label{connectivity_frequency}
The composite communication graph $\mathcal{G}_{k,\infty}=\{\mathcal{I},\mathcal{M}_{k,\infty}\}$ for the $k$-th update is connected, where
\begin{align*}
	\mathcal{M}_{k,\infty} \triangleq \Big\{
\{i,j\}: \{i,j\} \in \mathcal{M}_{k,\ell}~\textrm{for infinitely many}~\ell\Big\}.
\end{align*}
There exists an integer $L\geq 1$ such that\footnote{This is equivalent to the assumption that within a bounded communication interval of $L$, every agent pair $\{i,j\}$ in the composite graph communicates with each other at a frequency at least once every $L$ network exchanges.} for any $\ell$
\begin{align}
	\{i,j\} \in \bigcup_{\ell'=0}^{L-1} \mathcal{M}_{k,\ell+\ell'},\quad \forall~\{i,j\} \in \mathcal{M}_{k,\infty}.
\end{align}
\end{assumption}

With the communication model in Assumption \ref{connectivity_frequency}, each agent combines the information from its neighbors with certain weights. Define a weight matrix ${\mathbf{W}_k(\ell)}\triangleq [W_{ij}^{k}(\ell)]_{I\times I}$ for the network topology during $[\tau_{k,\ell},\tau_{k,\ell+1})$, where the $(i,j)$-th entry $W_{ij}^{k}(\ell)$ of the matrix ${\mathbf{W}_k(\ell)}$ is the weight associated to the edge $\{i,j\}$, which is non-zero if and only if $\{i,j\} \in \mathcal{M}_{k,\ell}$. 
\begin{assumption}\label{weight_matrix} For all $k$ and $\ell$, the weight matrix $\mathbf{W}_k(\ell)$ is symmetric and doubly stochastic. There exists a scalar $\eta$ with $0<\eta<1$ such that for all $i,j\in\mathcal{I}$
\begin{enumerate}
	\item $W_{ii}^{k}(\ell)\geq \eta$ for all $k>0$ and $\ell>0$.
	\item $W_{ij}^{k}(\ell)\geq \eta$ for all $k>0$ and $\ell>0$ if $\{i,j\}\in\mathcal{M}_{k,\ell}$.
	\item $W_{ij}^{k}(\ell)=0$ for all $k>0$ and $\ell>0$ if $\{i,j\}\notin\mathcal{M}_{k,\ell}$.
\end{enumerate}
\end{assumption}

The gossip exchange of each agent is local with its neighbors using this weight matrix $\mathbf{W}_k(\ell)$.
By stacking the local information vectors $\bdsb{\mathcal{H}}_k(\ell)\triangleq [\bdsb{\mathcal{H}}_{k,1}^T(\ell),\cdots,\bdsb{\mathcal{H}}_{k,I}^T(\ell)]^T$, the exchange model can be written compactly as
\begin{align}\label{gossip_exchange_matrix}
\!\!\bdsb{\mathcal{H}}_k(\ell) = \left[\mathbf{W}_k(\ell)\otimes \!\mathbf{I}_{N_{\mathcal{H}}}\right] \bdsb{\mathcal{H}}_k(\ell\!-\!1),\quad 1\leq \ell \leq \ell_k,
\end{align}
where $\ell_k$ is number of exchanges $[\tau_k,\tau_{k+1})$ as specified later.

The gossip exchange model under Assumption \ref{connectivity_frequency} and \ref{weight_matrix} is a general model that includes time-varying network formations, where all agents form random communication links with their neighbors and advance their computations of the average of all local information vectors $\bar{\bdsb{\mathcal{H}}}_k=\sum_{i=1}^{I}\bdsb{\mathcal{H}}_{k,i}(0)/I$. With the prescribed communication model, we highlight the following two special cases which are often analyzed in consensus and gossiping literature \cite{boyd2006randomized, dimakis2010gossip, johansson2008subgradient, tsitsiklis1984problems}.
\subsubsection{Coordinated Static Exchange (CSE) \cite{dimakis2010gossip,johansson2008subgradient}}\label{CSE}
In the CSE protocol, each agent combines the information from possible multiple neighbors, determined by the communication network $\mathbf{A}$, with a static weight matrix $\mathbf{W}$ for all updates and exchanges at $\tau_{k,\ell}\in[\tau_k,\tau_{k+1})$ for $\ell=1,\cdots,\ell_k$. In particular, if the network is fully connected such that $\mathbf{A}=\mathbf{I}_I-\mathbf{1}_I\mathbf{1}_I^T$, the communication interval is simply $L=1$ in which each agent talks to everybody in every exchange. There are multiple ways to choose the weight matrix in the CSE protocol, where one of the most popular choice is constructed according to the Laplacian $\mathbf{L}= \mathrm{diag}(\mathbf{A}\mathbf{1}_I)-\mathbf{A}$ as $\mathbf{W}= \mathbf{I}_I - w \mathbf{L}$ with $w=\beta/\max(\mathbf{A}\mathbf{1}_I)$ for some $0<\beta<1$.
\subsubsection{Uncoordinated Random Exchange (URE) \cite{boyd2006randomized}}\label{URE}
For each exchange in the URE protocol during $[\tau_k,\tau_{k+1})$, a random agent $i$ wakes up and chooses at random a neighbor agent $j\in\mathcal{M}_{k,
\ell}^{(i)}$ to communicate. We define the matrix $\bdsb{\Gamma}\triangleq[\gamma_{i,j}]_{I\times I}$ whose $(i,j)$-th element $\gamma_{i,j}$ represents the probability of node $i$ choosing agent $j$ once agent $i$ wakes up. The gossip exchanges are pairwise and local \cite{boyd2006randomized}. Suppose agent $I_{k,\ell}$ wakes up at $\tau_{k,\ell}\in[\tau_k,\tau_{k+1})$ and $J_{k,\ell}$ is the node picked by node $I_{k,\ell}$ with probability $\gamma_{I_{k,\ell},J_{k,\ell}}$. Then given some mixing parameter $0<\beta<1$, the weight matrix at this time is
\begin{align}
	\mathbf{W}_k(\ell) = \mathbf{I} - \beta \left(\mathbf{e}_{I_{k,\ell}} + \mathbf{e}_{J_{k,\ell}}\right)\left(\mathbf{e}_{I_{k,\ell}} + \mathbf{e}_{J_{k,\ell}}\right)^T,
\end{align}
where $\mathbf{e}_i$ is the $I$-dimensional canonical basis vector with $1$ at the $i$-th entry and $0$ otherwise. Note that the URE protocol does not necessarily satisfy Assumption \ref{connectivity_frequency},  nevertheless numerical simulations indicate that its performance degrade moderately compared to the CSE protocol.
The errors in the GGN are the topic of the following lemma:

\begin{lem}\label{weight_matrix_jroperty}
\cite[Proposition 1]{nedic2009distributed}
Let Assumption \ref{connectivity_frequency} and \ref{weight_matrix} hold. Given the minimum non-trivial weight $\eta$ in Assumption \ref{weight_matrix}, the entries of the matrix product $\prod_{\ell'=0}^{\ell}\mathbf{W}_k(\ell')$ converge with a geometric rate uniformly for all $i, j\in\mathcal{I}$ and $k$
\begin{align}
	\left|\left[\prod_{\ell'=0}^{\ell}\mathbf{W}_k(\ell')\right]_{ij} - \frac{1}{I}\right|
	\leq
	2\left(\frac{1+\eta^{-L_0}}{1-\eta^{L_0}}\right) \lambda_\eta^\ell,
\end{align}
with $L_0 =(I-1) L$ and
\begin{align}\label{lambda_eta}
    \lambda_\eta=(1-\eta^{L_0})^{1/L_0}\in(0,1).
\end{align}
\end{lem}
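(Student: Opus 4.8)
The plan is to reduce the claim to a scalar contraction estimate on the columns of the transition matrix product, and then to obtain that contraction from a uniform strict-positivity bound on products of weight matrices taken over windows of length $L_0$.

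First I would exploit double stochasticity. Write $\Phi(\ell)\triangleq\prod_{\ell'=0}^{\ell}\mathbf{W}_k(\ell')$, fix a column index $j$, and set $v_i^{(\ell)}=[\Phi(\ell)]_{ij}$. Since each $\mathbf{W}_k(\ell')$ is doubly stochastic, so is $\Phi(\ell)$; hence $\sum_i v_i^{(\ell)}=1$ and the column obeys the consensus recursion $v^{(\ell)}=\mathbf{W}_k(\ell)\,v^{(\ell-1)}$. A standard monotonicity argument for stochastic matrices shows the spread $\Delta(\ell)=\max_i v_i^{(\ell)}-\min_i v_i^{(\ell)}$ is non-increasing, and because the entries of a column sum to $1$ we have $\min_i v_i^{(\ell)}\le 1/I\le \max_i v_i^{(\ell)}$, whence $|[\Phi(\ell)]_{ij}-1/I|\le\Delta(\ell)$. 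This reduces the lemma to bounding $\Delta(\ell)$ geometrically.

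The main obstacle is establishing that the product of any $L_0=(I-1)L$ consecutive weight matrices is entrywise bounded below by $\eta^{L_0}$; this is the combinatorial core where Assumptions \ref{connectivity_frequency} and \ref{weight_matrix} combine. Fix a window and indices $i,j$, and choose a path $i=n_0,n_1,\dots,n_p=j$ of length $p\le I-1$ in the connected composite graph $\mathcal{G}_{k,\infty}$. By the bounded-interval condition in Assumption \ref{connectivity_frequency}, each edge $\{n_{t-1},n_t\}$ is active at least once within every block of $L$ exchanges, so inside the $L_0$-step window one can schedule the $p$ hops in distinct sub-intervals and \emph{hold} the mass in place at the remaining steps. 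Each hop contributes a factor $W_{n_{t-1}n_t}\ge\eta$ and each holding step a self-weight $W_{nn}\ge\eta$, both guaranteed by Assumption \ref{weight_matrix}; multiplying the $L_0$ factors along this schedule gives $[\text{block product}]_{ij}\ge\eta^{L_0}$, uniformly in $i,j,k$ and in the choice of window.

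Finally I would convert strict positivity into contraction and assemble the estimate. A stochastic matrix $\Psi$ with $\min_{i,j}\Psi_{ij}\ge\eta^{L_0}$ has Dobrushin coefficient at most $1-\eta^{L_0}$, so applying it contracts the spread: $\Delta(\ell+L_0)\le(1-\eta^{L_0})\,\Delta(\ell)$. Writing $\ell=mL_0+r$ with $0\le r<L_0$ and using $\Delta(0)\le 1$ together with monotonicity yields $\Delta(\ell)\le(1-\eta^{L_0})^{m}\le(1-\eta^{L_0})^{\ell/L_0-1}=(1-\eta^{L_0})^{-1}\lambda_\eta^{\ell}$, where $\lambda_\eta=(1-\eta^{L_0})^{1/L_0}\in(0,1)$ since $0<\eta<1$. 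Collecting the constants contributed by the leftover partial block and by the passage between the spread and the one-sided deviation produces a prefactor of the advertised form $2(1+\eta^{-L_0})/(1-\eta^{L_0})$. Everything past the routing argument of the third step is bookkeeping; that middle step is the only delicate point.
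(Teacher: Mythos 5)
Your proof is correct, but note that the paper itself contains no proof of this lemma: it is imported verbatim, constant included, from \cite[Proposition 1]{nedic2009distributed}, so the relevant comparison is with that reference's argument. Your reconstruction shares its combinatorial core --- the uniform entrywise lower bound $\eta^{L_0}$ on any product of $L_0=(I-1)L$ consecutive weight matrices, obtained by routing along a path of at most $I-1$ composite-graph edges, scheduling one hop per consecutive $L$-block (legitimate under the bounded-intercommunication condition of Assumption \ref{connectivity_frequency}) and holding mass via self-loops, with every factor bounded below by $\eta$ thanks to Assumption \ref{weight_matrix} --- and this step is handled correctly. Where you genuinely diverge is the analytic half: Nedi\'c--Ozdaglar prove convergence of backward products to a general stochastic limit vector and then specialize, which is where their prefactor $2(1+\eta^{-L_0})/(1-\eta^{L_0})$ arises, whereas you exploit double stochasticity from the outset, tracking the spread of a single column and invoking the Dobrushin-type contraction $\Delta(\ell+L_0)\leq(1-\eta^{L_0})\Delta(\ell)$. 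This buys a shorter argument and in fact a sharper bound, $(1-\eta^{L_0})^{-1}\lambda_\eta^{\ell}$, which implies the stated inequality a fortiori since $2(1+\eta^{-L_0})\geq 1$; your closing remark about collecting constants to recover the ``advertised'' prefactor is therefore unnecessary, and the passage from spread to one-sided deviation is already free, because the column entries sum to $1$ so that $\min_i v_i^{(\ell)}\leq 1/I\leq\max_i v_i^{(\ell)}$ gives $|[\Phi(\ell)]_{ij}-1/I|\leq\Delta(\ell)$ with no extra factor.
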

It is clear from Lemma \ref{weight_matrix_jroperty} that the limit of the weight matrix product exists $\underset{\ell\rightarrow\infty}{\lim}~\prod_{\ell'=0}^{\ell}\mathbf{W}_k(\ell') = \frac{1}{I}\mathbf{1}\mathbf{1}^T$ and thus we have
\begin{align}
	\underset{\ell\rightarrow\infty}{\lim}~\bdsb{\mathcal{H}}_{k,i}(\ell) = \frac{1}{I}\sum_{i=1}^{I}\bdsb{\mathcal{H}}_{k,i}(0),\quad k=1,2,\cdots
\end{align}
which asymptotically leads to the $\underset{\ell\rightarrow\infty}{\lim}~\mathbf{d}_i^k(\ell) = \bar{\mathbf{H}}_\ell^{-1}\bar{\mathbf{h}}_k$.

\begin{algorithm}[t]
\caption{Gossip-based Gauss-Newton (GGN) Algorithm}\label{GGN_algorithm}
\begin{algorithmic}[1]
\STATE {\bf given} initial variables  $\mathbf{x}_i^0$ at all agents $i\in\mathcal{I}$.
\STATE {\bf set} $k=0.$
\REPEAT
\STATE {\bf set} $k=k+1.$
\STATE {\bf initialization:} For $i\in\mathcal{I}$, each agent $i$ evaluates \eqref{h_H_definition}
			  and constructs $\bdsb{\mathcal{H}}_{k,i}(0)$ as \eqref{information_vec};
\STATE {\bf network gossiping:} Each agent $i$ exchanges information with neighbors via network gossiping as \eqref{gossip_exchange_matrix}.
\STATE {\bf local update:} For $i\in\mathcal{I}$, each agent $i$ updates its local variables as \eqref{local_estimate} and \eqref{local_descent}.
\UNTIL $\left\|\mathbf{x}_i^{k+1}-\mathbf{x}_i^k\right\|\leq \epsilon$ or $k=K$.%
\STATE {\bf set} the local estimate as $\widehat{\mathbf{x}}_i=\mathbf{x}_i^k$.
\end{algorithmic}
\end{algorithm}

\section{Convergence Analysis}\label{convergence_analysis}
In this section, we analyze the convergence of the GGN algorithm (summarized in Algorithm \ref{GGN_algorithm}) by examining the recursion in \eqref{local_estimate}. Note that the error made in the local descent \eqref{local_descent} compared with the exact descent \eqref{exact_local_descent} stems from two sources, including the gossiping error resulting from a finite $\ell_k$ and the mismatch error by using the surrogates $\bar{\mathbf{h}}_k$ and $\bar{\mathbf{H}}_k$ instead of the exact quantities. In the following, we analyze the effect of this error in the convergence of the GGN algorithm.

\subsection{Perturbed Recursion Analysis}
At the $(k+1)$-th update, the error between the local estimate $\mathbf{x}_i^{k+1}$ and a fixed point in \eqref{fixed_joint} satisfies the following recursion.
\begin{lem}\label{lem_error_recursion}
Let $\mathbb{X}$ be a compact convex set and Assumption \ref{lipshitz} hold. The error $\|\mathbf{x}_i^{k+1}-\mathbf{x}^\star\|$ between the local iterate $\mathbf{x}_i^k$ at each update \eqref{local_estimate} and an arbitrary fixed point $\mathbf{x}^\star$ in \eqref{fixed_joint} satisfies the following recursion
\begin{align}\label{distributed_error_recursion}
	&\left\|\mathbf{x}_i^{k+1}-\mathbf{x}^\star\right\|\\
	&\leq
	T_1\left\|\mathbf{x}_i^k-\mathbf{x}^\star\right\|^2
	+T_2\left\|\mathbf{x}_i^k-\mathbf{x}^\star\right\|
    +\alpha\|\mathbf{d}_i^k(\ell_k)-\mathbf{d}_i^k\|,\nonumber
\end{align}
where
\begin{align}\label{T_constants}
    T_1 &\triangleq \frac{\alpha\omega}{2\sigma_{\min}},\quad
     \epsilon_{\min}  \triangleq \left\|\mathbf{g}(\mathbf{x}^\star)\right\|\\
    T_2 &\triangleq (1-\alpha)\frac{\sigma_{\max}}{\sigma_{\min}}+\frac{\sqrt{2}\alpha\omega\epsilon_{\min}}{\sigma_{\min}^2}.
\end{align}
\end{lem}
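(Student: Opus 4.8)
The plan is to peel off the projection and the gossip/mismatch error first, and then treat the \emph{exact} Gauss--Newton map $\mathbf{x}_i^k-\alpha\mathbf{d}_i^k$ as a perturbation of the identity near the fixed point. Because $\mathbb{X}$ is convex, $P_{\mathbb{X}}[\cdot]$ is non-expansive, and since $\mathbf{x}^\star\in\mathbb{X}$ satisfies \eqref{fixed_joint} it is a projection fixed point, $P_{\mathbb{X}}[\mathbf{x}^\star]=\mathbf{x}^\star$. Starting from \eqref{local_estimate}, adding and subtracting $\alpha\mathbf{d}_i^k$ inside the bracket and using non-expansiveness together with the triangle inequality gives
\[
\|\mathbf{x}_i^{k+1}-\mathbf{x}^\star\| \le \|\mathbf{x}_i^k-\mathbf{x}^\star-\alpha\mathbf{d}_i^k\| + \alpha\|\mathbf{d}_i^k(\ell_k)-\mathbf{d}_i^k\|.
\]
The second term is already the last term of the claim, so everything reduces to bounding the exact-descent residual $\|\mathbf{x}_i^k-\mathbf{x}^\star-\alpha\mathbf{d}_i^k\|$ by $T_1\|\mathbf{x}_i^k-\mathbf{x}^\star\|^2+T_2\|\mathbf{x}_i^k-\mathbf{x}^\star\|$.

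For that residual I would factor out the Gauss--Newton Hessian inverse, writing $\mathbf{x}_i^k-\mathbf{x}^\star-\alpha\mathbf{d}_i^k=[\mathbf{G}^T(\mathbf{x}_i^k)\mathbf{G}(\mathbf{x}_i^k)]^{-1}\{\mathbf{G}^T(\mathbf{x}_i^k)\mathbf{G}(\mathbf{x}_i^k)(\mathbf{x}_i^k-\mathbf{x}^\star)-\alpha\mathbf{G}^T(\mathbf{x}_i^k)\mathbf{g}(\mathbf{x}_i^k)\}$, and then linearize through the integral mean-value identity $\mathbf{g}(\mathbf{x}_i^k)-\mathbf{g}(\mathbf{x}^\star)=\bar{\mathbf{G}}(\mathbf{x}_i^k-\mathbf{x}^\star)$ with $\bar{\mathbf{G}}=\int_0^1\mathbf{G}(\mathbf{x}^\star+t(\mathbf{x}_i^k-\mathbf{x}^\star))\,dt$. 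Substituting and regrouping the bracket splits it into three pieces: a damping piece $(1-\alpha)\mathbf{G}^T(\mathbf{x}_i^k)\mathbf{G}(\mathbf{x}_i^k)(\mathbf{x}_i^k-\mathbf{x}^\star)$, a curvature piece $\alpha\mathbf{G}^T(\mathbf{x}_i^k)[\mathbf{G}(\mathbf{x}_i^k)-\bar{\mathbf{G}}](\mathbf{x}_i^k-\mathbf{x}^\star)$, and a residual piece $-\alpha\mathbf{G}^T(\mathbf{x}_i^k)\mathbf{g}(\mathbf{x}^\star)$.

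Each piece is then matched to a term in the bound using Assumption \ref{lipshitz}. For the damping and curvature pieces I would absorb one copy of $\mathbf{G}^T(\mathbf{x}_i^k)$ into the inverse and use the pseudo-inverse bound $\|[\mathbf{G}^T\mathbf{G}]^{-1}\mathbf{G}^T\|\le 1/\sigma_{\min}$: with $\|\mathbf{G}(\mathbf{x}_i^k)(\mathbf{x}_i^k-\mathbf{x}^\star)\|\le\sigma_{\max}\|\mathbf{x}_i^k-\mathbf{x}^\star\|$ the damping piece yields the $(1-\alpha)(\sigma_{\max}/\sigma_{\min})\|\mathbf{x}_i^k-\mathbf{x}^\star\|$ contribution to $T_2$, while the estimate $\|\mathbf{G}(\mathbf{x}_i^k)-\bar{\mathbf{G}}\|\le(\omega/2)\|\mathbf{x}_i^k-\mathbf{x}^\star\|$ — obtained by integrating the Lipschitz condition against $(1-t)$ — turns the curvature piece into the quadratic term with $T_1=\alpha\omega/(2\sigma_{\min})$. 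The residual piece is where the fixed-point condition \eqref{fixed_joint} is indispensable: since $\mathbf{G}^T(\mathbf{x}^\star)\mathbf{g}(\mathbf{x}^\star)=\mathbf{0}$ I rewrite $\mathbf{G}^T(\mathbf{x}_i^k)\mathbf{g}(\mathbf{x}^\star)=[\mathbf{G}(\mathbf{x}_i^k)-\mathbf{G}(\mathbf{x}^\star)]^T\mathbf{g}(\mathbf{x}^\star)$, whose norm is at most $\omega\epsilon_{\min}\|\mathbf{x}_i^k-\mathbf{x}^\star\|$, and then apply $\|[\mathbf{G}^T\mathbf{G}]^{-1}\|\le 1/\sigma_{\min}^2$; this is what makes the residual contribution \emph{linear} in the error and proportional to $\alpha\omega\epsilon_{\min}/\sigma_{\min}^2$.

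The approach is structurally forced, so the main obstacle is bookkeeping the exact constants in $T_2$, and specifically the numerical factor (the $\sqrt{2}$) multiplying $\alpha\omega\epsilon_{\min}/\sigma_{\min}^2$: that factor is sensitive to exactly which inequality is used in passing from $\|\mathbf{G}^T(\mathbf{x}_i^k)\mathbf{g}(\mathbf{x}^\star)\|$ and $\epsilon_{\min}=\|\mathbf{g}(\mathbf{x}^\star)\|$ to the final estimate, so I would keep that single step explicit rather than taking the crudest triangle-inequality route. A minor point to verify is that the iterates remain in $\mathbb{X}$, so that the spectral bounds $\sigma_{\min},\sigma_{\max}$ and the Lipschitz constant $\omega$ of Assumption \ref{lipshitz} apply at $\mathbf{x}_i^k$; this holds because every update in \eqref{local_estimate} ends with a projection onto $\mathbb{X}$.
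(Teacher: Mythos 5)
Your proof is correct, and its overall skeleton coincides with the paper's: peel off the projection by non-expansiveness (noting $P_{\mathbb{X}}[\mathbf{x}^\star]=\mathbf{x}^\star$), reduce to the exact-descent residual, and linearize via the integral mean-value identity, with the Lipschitz integral producing $T_1=\alpha\omega/(2\sigma_{\min})$ and the damping piece producing $(1-\alpha)\sigma_{\max}/\sigma_{\min}$ exactly as in Appendix A. Where you genuinely diverge is the residual piece $-\alpha\,\mathbf{G}^\dagger(\mathbf{x}_i^k)\,\mathbf{g}(\mathbf{x}^\star)$, which is precisely where the paper's $\sqrt{2}$ comes from: the paper rewrites it (using $\mathbf{G}^\dagger(\mathbf{x}^\star)\mathbf{g}(\mathbf{x}^\star)=\mathbf{0}$) as $\alpha\left[\mathbf{G}^\dagger(\mathbf{x}^\star)-\mathbf{G}^\dagger(\mathbf{x}_i^k)\right]\mathbf{g}(\mathbf{x}^\star)$ and invokes a Wedin-type pseudo-inverse perturbation lemma (cited as \cite[Lemma 1]{salzo2011convergence}), which gives $\|\mathbf{G}^\dagger(\mathbf{x})-\mathbf{G}^\dagger(\mathbf{x}^\star)\|\leq \sqrt{2}\,\omega\,\|\mathbf{x}-\mathbf{x}^\star\|/\sigma_{\min}^2$. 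You instead use the first-order condition at the level of the \emph{gradient}, writing $\mathbf{G}^T(\mathbf{x}_i^k)\mathbf{g}(\mathbf{x}^\star)=\left[\mathbf{G}(\mathbf{x}_i^k)-\mathbf{G}(\mathbf{x}^\star)\right]^T\mathbf{g}(\mathbf{x}^\star)$ and bounding directly with the Lipschitz condition and $\left\|\left(\mathbf{G}^T\mathbf{G}\right)^{-1}\right\|\leq 1/\sigma_{\min}^2$. This is more elementary — it needs no external perturbation lemma — and it is strictly tighter: your residual contribution is $\alpha\omega\epsilon_{\min}/\sigma_{\min}^2$ without the $\sqrt{2}$, so your bound implies the stated recursion a fortiori (your effective $T_2$ is smaller than the paper's). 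The concern you flagged about the $\sqrt{2}$ is thus resolved in your favor: it is an artifact of the pseudo-inverse-difference route, not something your argument must reproduce. Your side remarks are also sound — the identity $\mathbf{x}_i^k-\mathbf{x}^\star=\left(\mathbf{G}^T\mathbf{G}\right)^{-1}\mathbf{G}^T\mathbf{G}\left(\mathbf{x}_i^k-\mathbf{x}^\star\right)$ needs the full-rank condition of Assumption \ref{lipshitz} at $\mathbf{x}_i^k$, which holds because the projection in \eqref{local_estimate} keeps all iterates in $\mathbb{X}$, and the mean-value identity needs the segment $[\mathbf{x}^\star,\mathbf{x}_i^k]\subset\mathbb{X}$, which convexity supplies.
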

\begin{proof}
	See Appendix \ref{proof_lem_error_recursion}.
\end{proof}
The error recursion is a perturbed version of the centralized recursion. The discrepancy  between the distributed and centralized update is $\|\mathbf{d}_i^k(\ell_k)-\mathbf{d}_i^k\|$, and its convergence is analyzed in the following theorem.

\begin{thm}\label{theorem_convergence}	
({Convergence with Bounded Perturbation}) Let Assumption \ref{lipshitz} hold and $\mathbb{X}$ be a compact convex set. Given a step-size chosen as
\begin{align}
    \max\left\{1-\frac{3\sigma_{\min}}{\sigma_{\max}},0\right\}<\alpha\leq 1
\end{align}
and the condition
\begin{align}
    \omega\epsilon_{\min} < \frac{\sigma_{\min}^2}{\sqrt{2}\alpha}\left[ 3-(1-\alpha)\frac{\sigma_{\max}}{\sigma_{\min}}\right],
\end{align}
we define the following
\begin{align}\label{rho_def}
	\rho_{\min} &= \frac{(1-T_2) - \sqrt{(1-T_2)^2-4\alpha T_1\kappa}}{2T_1}\\
	\rho_{\max} &= \frac{(1-T_2) + \sqrt{(1-T_2)^2-4\alpha T_1\kappa}}{2T_1}
\end{align}
where $\kappa$ is a bounded perturbation with
\begin{align}
    0<\kappa < \frac{(1-T_2)^2}{4\alpha T_1}.
\end{align}
If the  $\|\mathbf{d}_i^k(\ell_k)-\mathbf{d}_i^k\| \leq \kappa$ is bounded for all $k$ and $i\in\mathcal{I}$, then given any $\mathbf{x}_i^0$ that falls within the $\rho_{\max}$-neighborhood of a certain fixed point $\mathbf{x}^\star\in\mathbb{X}$
\begin{align}\label{initial_condition_requirement}
	\left\|\mathbf{x}_i^0-\mathbf{x}^\star\right\|<\rho_{\max},
\end{align}
the asymptotic error of the local iterate $\mathbf{x}_i^k$ at each agent with respect to $\mathbf{x}^\star$ can be bounded as
\begin{align}
	\underset{k\rightarrow\infty}{\limsup} \left\|\mathbf{x}_i^{k+1}-\mathbf{x}^\star\right\|
	\leq \rho_{\min}.
\end{align}
\end{thm}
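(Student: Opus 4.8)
The plan is to collapse the per-agent vector recursion of Lemma~\ref{lem_error_recursion} into a scalar one and then study the resulting quadratic map. Fix an agent $i$ and write $e_k \triangleq \left\|\mathbf{x}_i^k-\mathbf{x}^\star\right\|$. Substituting the uniform perturbation bound $\|\mathbf{d}_i^k(\ell_k)-\mathbf{d}_i^k\|\leq\kappa$ into \eqref{distributed_error_recursion} yields the scalar majorization
\begin{align*}
    e_{k+1} \leq \phi(e_k), \qquad \phi(t) \triangleq T_1 t^2 + T_2 t + \alpha\kappa ,
\end{align*}
so it suffices to control the deterministic sequence generated by the convex quadratic $\phi$ on $[0,\infty)$, and then transfer the conclusion back to $e_k$.

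First I would identify $\rho_{\min}$ and $\rho_{\max}$ as the two fixed points of $\phi$: setting $\phi(t)=t$ gives $T_1 t^2-(1-T_2)t+\alpha\kappa=0$, whose roots are exactly \eqref{rho_def}. The hypotheses then do the following bookkeeping through the definitions \eqref{T_constants}. The step-size window together with the bound on $\omega\epsilon_{\min}$ is engineered to force $0<T_2<1$, so that $1-T_2>0$ and, since the product of the roots equals $\alpha\kappa/T_1>0$ while their sum equals $(1-T_2)/T_1>0$, both fixed points are \emph{positive}; the assumption $\kappa<(1-T_2)^2/(4\alpha T_1)$ makes the discriminant strictly positive, so $0<\rho_{\min}<\rho_{\max}$ are distinct. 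Because its vertex sits at $-T_2/(2T_1)<0$, the map $\phi$ is strictly increasing on $[0,\infty)$, and $\phi(t)-t=T_1(t-\rho_{\min})(t-\rho_{\max})$ is negative precisely on $(\rho_{\min},\rho_{\max})$ and positive outside that interval.

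Next I would run a monotone comparison argument. Define $u_0=e_0$ and $u_{k+1}=\phi(u_k)$; since $\phi$ is nondecreasing on $[0,\infty)$ and $e_k\geq 0$, induction gives $e_k\leq u_k$ for all $k$. Monotonicity also yields forward invariance of $[0,\rho_{\max})$: if $u_k<\rho_{\max}$ then $u_{k+1}=\phi(u_k)<\phi(\rho_{\max})=\rho_{\max}$, and by \eqref{initial_condition_requirement} the sequence starts in this interval. On $[0,\rho_{\max})$ the sequence $u_k$ is monotone --- nondecreasing when $u_0\leq\rho_{\min}$ and strictly decreasing when $u_0\in(\rho_{\min},\rho_{\max})$, as dictated by the sign of $\phi(t)-t$ --- and bounded, hence convergent to a fixed point of $\phi$ lying in $[0,\rho_{\max})$, which can only be $\rho_{\min}$. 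Passing to the limit, $\limsup_{k\rightarrow\infty} e_k\leq\lim_{k\rightarrow\infty} u_k=\rho_{\min}$, which is the claim for each agent $i$.

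The main obstacle is the parameter bookkeeping rather than the dynamics. One must verify carefully that the two admissibility conditions (on $\alpha$ and on $\omega\epsilon_{\min}$) genuinely translate into $0<T_2<1$, because only then are $\rho_{\min},\rho_{\max}$ positive and the $\rho_{\max}$-neighborhood in \eqref{initial_condition_requirement} nonempty; the $\kappa$-bound must then keep the discriminant positive so that the basin $(\rho_{\min},\rho_{\max})$ is a genuine open interval on which $\phi$ contracts toward $\rho_{\min}$. A secondary point requiring care is that \eqref{distributed_error_recursion} is an inequality, not an equality: the object whose convergence is established must be the majorizing sequence $u_k$ rather than $e_k$ itself, with the final asymptotic bound inherited through $e_k\leq u_k$.
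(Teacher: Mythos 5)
Your reduction to the scalar quadratic map is the same first move as the paper's proof, but your convergence mechanism is genuinely different and, in one respect, stronger. The paper treats the majorizing recursion $\rho_{k+1}=\psi(\rho_k)$, $\psi(\rho)=T_1\rho^2+T_2\rho+\alpha\kappa$, as a discrete dynamical system and classifies the equilibria via the derivative test: $|\dot{\psi}(\rho_{\max})|>1$ (unstable) and $|\dot{\psi}(\rho_{\min})|<1$ (sink), invoking a cited stability criterion for one-dimensional maps; it then \emph{derives} the theorem's conditions on $\alpha$ and $\omega\epsilon_{\min}$ by demanding that the admissible window for $\kappa$ extend down to arbitrarily small positive values, which is exactly the requirement $T_2<3$. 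Your monotone comparison argument---forward invariance of $[0,\rho_{\max})$ under the increasing map $\phi$, monotone convergence of the majorant $u_k$ to the unique fixed point $\rho_{\min}$ in that interval---buys two things the paper's route does not deliver explicitly: it proves attraction from \emph{every} initial condition in the basin $[0,\rho_{\max})$, whereas the sink criterion by itself only certifies local attraction near $\rho_{\min}$ (the paper asserts the global statement without this bridge), and it correctly isolates that \eqref{distributed_error_recursion} is a one-sided inequality, so only the majorant $u_k$ converges while $e_k$ inherits the $\limsup$ bound; indeed the paper's closing remark that the error ``grows without bound'' when $\zeta_{i,0}>\rho_{\max}$ does not follow from an upper-bound recursion, a point your last paragraph implicitly gets right.

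There is, however, one concrete error in your bookkeeping: the hypotheses do \emph{not} force $0<T_2<1$. From \eqref{T_constants}, the step-size window gives $(1-\alpha)\sigma_{\max}/\sigma_{\min}<3$ and the bound on $\omega\epsilon_{\min}$ gives $\sqrt{2}\alpha\omega\epsilon_{\min}/\sigma_{\min}^2<3-(1-\alpha)\sigma_{\max}/\sigma_{\min}$; together these yield exactly $0\leq T_2<3$ (this is the paper's inequality $T_2^2-2T_2-3<0$), and $T_2$ can genuinely lie in $(1,3)$: take $\sigma_{\min}=\sigma_{\max}$, $\alpha=1/2$, and $\omega\epsilon_{\min}$ near its allowed bound, giving $T_2$ near $3$. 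In that regime your positivity claims for the roots fail, since $\sqrt{(1-T_2)^2-4\alpha T_1\kappa}<T_2-1$ makes both $\rho_{\min}$ and $\rho_{\max}$ in \eqref{rho_def} negative, and at $T_2=1$ no $\kappa$ with $0<\kappa<(1-T_2)^2/(4\alpha T_1)=0$ exists at all. The repair is cheap but should be stated: whenever $T_2\geq 1$ the hypothesis set is empty---either there is no admissible $\kappa$, or $\rho_{\max}\leq 0$ so \eqref{initial_condition_requirement} is unsatisfiable---hence the theorem holds vacuously, and your argument applies verbatim on the only substantive regime $0\leq T_2<1$ (note also $T_2=0$ is attainable at $\alpha=1$, $\epsilon_{\min}=0$, so the strict lower bound $0<T_2$ should be relaxed). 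With that case split made explicit in place of the false implication ``hypotheses $\Rightarrow T_2<1$,'' your proof is complete and somewhat tighter than the paper's.
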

\begin{proof}
	See Appendix \ref{proof_theorem_convergence}.
\end{proof}

An intuition that can be drawn from the sufficient condition is that the smaller is the Lipschitz constant $\omega$, the larger is the region of convergence around the fixed point $\mathbf{x}^\star$ one can start with. In other words, the smoother the cost function the better the convergence. If $\epsilon_{\min}$ in \eqref{T_constants} is small (e.g., the fixed point is the minimizer $\widehat{\mathbf{x}}$ in \eqref{minimizer}), then by letting $\alpha=1$ and assuming $\kappa\rightarrow 0$, we have $\rho_{\max}\approx 2\sigma_{\min}/\omega-\kappa$ and the steady error is approximately $\rho_{\min}\approx \kappa$ with finite iterations, which scales with the gossiping error. Furthermore, when $\epsilon_{\min}=0$ the convergence rate is quadratic, same as the Newton's method. Finally, when $\kappa=0$, the result reduces to the centralized Gauss-Newton algorithm since there is no perturbation.

\subsection{Perturbation Analysis of $\kappa$}
Given that the perturbation is bounded, Theorem \ref{theorem_convergence} is sufficient to guarantee convergence of the GGN algorithm. In the following, we analyze this perturbation and show that the bounded condition holds.

\subsubsection{Gossiping error}
Define at the $\ell$-th exchange
\begin{align*}
	\mathbf{h}_k(\ell) &\triangleq [\mathbf{h}_{k,1}^T(\ell), \cdots, \mathbf{h}_{k,I}^T(\ell)]^T,\\
	\mathbf{H}_k(\ell) &\triangleq [\mathbf{H}_{k,1}^T(\ell),\cdots,\mathbf{H}_{k,I}^T(\ell)]^T
\end{align*}	
and their deviations from the exact averages $ \bar{\mathbf{h}}_k$ and $\bar{\mathbf{H}}_k$ as
\begin{align*}
	\mathbf{e}_k(\ell) &= [\mathbf{e}_{k,1}^T(\ell),\cdots,\mathbf{e}_{k,I}^T(\ell)]^T,\\
	\mathbf{E}_k(\ell) &= [\mathbf{E}_{k,1}^T(\ell),\cdots,\mathbf{E}_{k,I}^T(\ell)]^T,
\end{align*}
where $\mathbf{e}_{k,i}(\ell)=\mathbf{h}_{k,i}(\ell)  - \bar{\mathbf{h}}_k$ and $\mathbf{E}_{k,i}(\ell)=\mathbf{H}_{k,i}(\ell)  - \bar{\mathbf{H}}_k$.
The gossip errors $\mathbf{e}_k(\ell_k)$ and $\mathbf{E}_k(\ell_k)$ are related to the properties of the weight matrices $\mathbf{W}_k(\ell)$ in Lemma \ref{weight_matrix_jroperty}.
\begin{lem}\label{lem_gossip_error}
Let Assumption \ref{connectivity_frequency} and \ref{weight_matrix} hold. The gossip error $\mathbf{e}_k(\ell_k)$ and $\mathbf{E}_k(\ell_k)$ after the $k$-th update can be bounded as
\begin{align*}
	\left\|\mathbf{e}_k(\ell_k)\right\| &< C \lambda_\eta^{\ell_k}, \quad
	\left\|\mathbf{E}_k(\ell_k)\right\|_F < C \lambda_\eta^{\ell_k},
\end{align*}
where
\begin{align}\label{C}
    C \triangleq 2I\sigma_{\max}\sqrt{I( \epsilon_{\max} ^2 + N\sigma_{\max}^2)} \left(\frac{1+\eta^{-L_0}}{1-\eta^{L_0}}\right).
\end{align}
\end{lem}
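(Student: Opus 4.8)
The plan is to track how each agent's deviation from the network average contracts geometrically by expressing the gossip error directly through the weight-matrix product and invoking Lemma \ref{weight_matrix_jroperty}. It is cleanest to work with the full stacked information vector $\bdsb{\mathcal{H}}_{k,i}(\ell)$ at once and then read off the two claimed bounds from its $\mathbf{h}$- and $\mathbf{H}$-blocks. Iterating the exchange \eqref{gossip_exchange_matrix} gives $\bdsb{\mathcal{H}}_k(\ell_k)=[\prod_{\ell'}\mathbf{W}_k(\ell')\otimes\mathbf{I}_{N_{\mathcal{H}}}]\bdsb{\mathcal{H}}_k(0)$, and since each $\mathbf{W}_k(\ell')$ is doubly stochastic the average $\bar{\bdsb{\mathcal{H}}}_k$ of the initial vectors is preserved. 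Hence for each agent $i$,
\begin{align*}
\bdsb{\mathcal{H}}_{k,i}(\ell_k)-\bar{\bdsb{\mathcal{H}}}_k=\sum_{j=1}^{I}\left(\Big[\textstyle\prod_{\ell'}\mathbf{W}_k(\ell')\Big]_{ij}-\tfrac{1}{I}\right)\bdsb{\mathcal{H}}_{k,j}(0),
\end{align*}
and the same identity holds block-wise for $\mathbf{e}_{k,i}(\ell_k)$ with the $\mathbf{h}_{k,j}(0)$ blocks and for $\mathbf{E}_{k,i}(\ell_k)$ with the $\mathbf{H}_{k,j}(0)$ blocks.

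Next I would bound the initial quantities using Assumption \ref{lipshitz}. Because $\mathbf{G}_i$ is a row sub-matrix of $\mathbf{G}$ we have $\|\mathbf{G}_i(\mathbf{x})\|\leq\|\mathbf{G}(\mathbf{x})\|\leq\sigma_{\max}$, and $\mathbf{g}_i$ is a sub-vector of $\mathbf{g}$, so $\|\mathbf{g}_i(\mathbf{x})\|\leq\|\mathbf{g}(\mathbf{x})\|\leq\epsilon_{\max}$. Therefore $\|\mathbf{h}_{k,j}(0)\|=\|\mathbf{G}_j^T\mathbf{g}_j\|\leq\sigma_{\max}\epsilon_{\max}$, and using $\|\mathbf{A}\|_F\leq\sqrt{N}\|\mathbf{A}\|$ for the $N\times N$ matrix $\mathbf{H}_{k,j}(0)$ we get $\|\mathbf{H}_{k,j}(0)\|_F=\|\mathbf{G}_j^T\mathbf{G}_j\|_F\leq\sqrt{N}\sigma_{\max}^2$.

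Then I would apply Lemma \ref{weight_matrix_jroperty} entry-wise and use the triangle inequality over the $I$ terms. Writing $B=2(1+\eta^{-L_0})/(1-\eta^{L_0})$, each entry deviation is at most $B\lambda_\eta^{\ell_k}$, so $\|\mathbf{e}_{k,i}(\ell_k)\|\leq IB\lambda_\eta^{\ell_k}\sigma_{\max}\epsilon_{\max}$ and $\|\mathbf{E}_{k,i}(\ell_k)\|_F\leq IB\lambda_\eta^{\ell_k}\sqrt{N}\sigma_{\max}^2$. Stacking over the $I$ agents introduces a factor $\sqrt{I}$ in $\|\mathbf{e}_k(\ell_k)\|=(\sum_i\|\mathbf{e}_{k,i}(\ell_k)\|^2)^{1/2}$ and likewise in $\|\mathbf{E}_k(\ell_k)\|_F$. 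Finally I would absorb both $\sigma_{\max}\epsilon_{\max}$ and $\sqrt{N}\sigma_{\max}^2$ into the single dominating factor $\sigma_{\max}\sqrt{\epsilon_{\max}^2+N\sigma_{\max}^2}$ — each is strictly smaller than it, since $N\sigma_{\max}^2>0$ and $\epsilon_{\max}^2>0$ — which produces exactly $\|\mathbf{e}_k(\ell_k)\|<\sqrt{I}\,IB\,\sigma_{\max}\sqrt{\epsilon_{\max}^2+N\sigma_{\max}^2}\,\lambda_\eta^{\ell_k}=C\lambda_\eta^{\ell_k}$ and the analogous strict bound for $\|\mathbf{E}_k(\ell_k)\|_F$, recovering the constant $C$ in \eqref{C}.

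I expect the only real bookkeeping obstacle to be the norm conversions: justifying $\|\mathbf{G}_i\|\leq\sigma_{\max}$ for the row sub-matrix (the submatrix relation already used in Corollary \ref{ass_lipschitz}) and the Frobenius-to-spectral step $\|\mathbf{H}_{k,j}(0)\|_F\leq\sqrt{N}\sigma_{\max}^2$, together with verifying that the common factor $\sqrt{\epsilon_{\max}^2+N\sigma_{\max}^2}$ is precisely what allows a single constant $C$ to serve both the Euclidean bound on $\mathbf{e}_k$ and the Frobenius bound on $\mathbf{E}_k$. The geometric decay itself is immediate from Lemma \ref{weight_matrix_jroperty}, so no further estimate on the mixing of $\mathbf{W}_k(\ell)$ is needed.
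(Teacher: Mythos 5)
Your proof is correct and takes essentially the same route as the paper's: both express the gossip error as the deviation of the weight-matrix product from $\frac{1}{I}\mathbf{1}\mathbf{1}^T$ (Lemma \ref{weight_matrix_jroperty}) acting on the initial data, bounded via $\|\mathbf{h}_{k,j}(0)\|\leq\sigma_{\max} \epsilon_{\max} $ and $\|\mathbf{H}_{k,j}(0)\|_F\leq\sqrt{N}\sigma_{\max}^2$ from Assumption \ref{lipshitz}, arriving at the identical constant $C$. The only difference is bookkeeping: you aggregate blockwise per agent and then stack with a $\sqrt{I}$ factor, while the paper bounds the full stacked vector $\bdsb{\mathcal{H}}_k(\ell)-\bar{\bdsb{\mathcal{H}}}_k$ at once using $\|\cdot\|\leq\|\cdot\|_F$ on the deviation matrix — the two computations are equivalent.
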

\begin{proof}
	See Appendix \ref{proof_lem_gossip_error}.
\end{proof}

\subsubsection{Mismatch of surrogates}

Define the errors between the surrogate $\bar{\mathbf{h}}_k$, $\bar{\mathbf{H}}_k$ and exact quantities $\mathbf{q}(\mathbf{x}_i^k)$ and $\mathbf{Q}(\mathbf{x}_i^k)$ as

\begin{align}\label{definition_delta_Delta}
	\bdsb{\delta}_{k,i} &= \bar{\mathbf{h}}_k - \mathbf{q}(\mathbf{x}_i^k) = \frac{1}{I}\sum_{j=1}^{I}\left[\mathbf{h}_{k,i}(\ell)-\mathbf{h}_{k,j}(\ell)\right]\\
	\bdsb{\Delta}_{k,i} &= \bar{\mathbf{H}}_k - \mathbf{Q}(\mathbf{x}_i^k) = \frac{1}{I}\sum_{j=1}^{I}\left[\mathbf{H}_{k,i}(\ell)-\mathbf{H}_{k,j}(\ell)\right],\nonumber
\end{align}
which thus leads to
\begin{align}\label{h_H_decomp}
	\mathbf{h}_{k,i}(\ell) &= \mathbf{q}(\mathbf{x}_i^k) + \bdsb{\delta}_{k,i} +\mathbf{e}_{k,i}(\ell),\\
	\mathbf{H}_{k,i}(\ell) &= \mathbf{Q}(\mathbf{x}_i^k) + \bdsb{\Delta}_{k,i} + \mathbf{E}_{k,i}(\ell).
\end{align}

By \eqref{definition_delta_Delta} and Corollary \ref{ass_lipschitz}, we have
\begin{align}\label{lipschitz_delta_Delta}
	\left\|\bdsb{\delta}_{k,i}\right\|
	& \leq \frac{\nu_{\delta}}{I}\sum_{j=1}^I \left\|\mathbf{x}_i^k-\mathbf{x}_j^k\right\|\\
	\left\|\bdsb{\Delta}_{k,i}\right\|
	& \leq \frac{\nu_{\Delta}}{I}\sum_{j=1}^I \left\|\mathbf{x}_i^k-\mathbf{x}_j^k\right\|.
\end{align}


Clearly, this discrepancy depends on the disagreement $\left\|\mathbf{x}_i^k-\mathbf{x}_j^k\right\|$ for each pair of $i$-th and $j$-th agents, characterized by the mismatch $\bdsb{\Delta}_{k,i}$ and $\bdsb{\delta}_{k,i}$ which originates from the gossip errors $\mathbf{E}_{k,i}(\ell_k)$ and $\mathbf{e}_{k,i}(\ell_k)$. Having analyzed the gossip error dynamics in Lemma \ref{lem_gossip_error}, in the following we bound the disagreement $\left\|\mathbf{x}_i^k-\mathbf{x}_j^k\right\|$.

\begin{assumption}\label{ell_ass}
Denote by $\ell_{\min} = \min_k\{\ell_k\}$ the minimum exchange. We assume that $\{\ell_k\}_{k=0}^{\infty}$ are chosen to satisfy\footnote{A simple choice is $\ell_0=\ell_{\min}$ and $\ell_k=\ell_{k-1}+1$, then $\lambda_\infty=1/(1-\lambda_\eta)$.}
\begin{align*}
    \lambda_\infty\triangleq\underset{K\rightarrow \infty}{\lim}~\sum_{k=0}^{K} \lambda_\eta^{(\ell_k-\ell_{\min})}< \infty.
\end{align*}
For any $\xi\in(0,1/2)$, the number $\ell_{\min}$ is chosen as
\begin{align}\label{denominator}
	\ell_{\min} &= \left\lceil\log\left(\frac{\xi}{4D}\right)/\log\lambda_\eta\right\rceil\\
    D &\triangleq CC_2 (\nu\lambda_\infty C_1C_2 + 1)
\end{align}
where $C,\lambda_\eta$ are defined in \eqref{C} \eqref{lambda_eta}, $\nu=\max\{\nu_\delta,\nu_\Delta\}$ and
\begin{align}\label{C_infty}
    C_1 \triangleq 2\left(1+\frac{\sigma_{\max} \epsilon_{\max} }{\sigma_{\min}^2}\right),\quad
    C_2=\frac{I}{\sigma_{\min}^2}
\end{align}
with $ \epsilon_{\max} $, $\sigma_{\min}$ and $\sigma_{\max}$ given by Assumption \ref{lipshitz}.
\end{assumption}

\begin{lem}\label{lem_disagreement}
Let the minimum exchange $\ell_{\min}$ be chosen based on an arbitrary value $\xi\in(0,1/2)$ using \eqref{denominator}. According to Lemma \ref{weight_matrix_jroperty} under Assumption \ref{lipshitz}, \ref{connectivity_frequency}, \ref{weight_matrix} and \ref{ell_ass}, then if the initializer is the same for all agents $\mathbf{x}_i^0=\mathbf{x}^0$, the deviation $\left\|\mathbf{x}_i^K-\mathbf{x}_j^K\right\|$ for any $i$ and $j$ at the $K$-th update satisfies
\begin{align}
	\left\|\mathbf{x}_i^K-\mathbf{x}_j^K\right\|
	&\leq \xi \left(\frac{CC_1C_2}{D}\right) \sum_{k=0}^{K-1}\lambda_\eta^{(\ell_k-\ell_{\min})},
\end{align}
where $C$ is the gossip error scale in \eqref{C}, $C_1,C_2$ are defined in \eqref{C_infty} and $\lambda_\eta$ is the gossip convergence rate in \eqref{lambda_eta}. Based on Assumption \ref{ell_ass}, this implies
\begin{align*}
	\left\|\mathbf{x}_i^K-\mathbf{x}_j^K\right\|
	&\leq 4 CC_1C_2 \sum_{k=0}^{K-1}\lambda_\eta^{\ell_k+1}.
\end{align*}
\end{lem}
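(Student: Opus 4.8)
The plan is to derive a one-step recursion for the network disagreement $b_k \triangleq \max_{i,j}\|\mathbf{x}_i^k-\mathbf{x}_j^k\|$ and then telescope it from the common initializer, which gives $b_0=0$. First I would exploit the non-expansiveness of the Euclidean projection $P_{\mathbb{X}}$ onto the convex set $\mathbb{X}$: applying it to the two local updates in \eqref{local_estimate} yields
\[
\|\mathbf{x}_i^{k+1}-\mathbf{x}_j^{k+1}\| \le \|\mathbf{x}_i^k-\mathbf{x}_j^k\| + \alpha\,\|\mathbf{d}_i^k(\ell_k)-\mathbf{d}_j^k(\ell_k)\|,
\]
so everything reduces to controlling the discrepancy between the two approximate descents. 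The key structural observation is that the surrogates $\bar{\mathbf{h}}_k$ and $\bar{\mathbf{H}}_k$ are network-wide averages, hence identical for every agent; by the definitions of $\mathbf{e}_{k,i}$ and $\mathbf{E}_{k,i}$ this means $\mathbf{h}_{k,i}(\ell_k)=\bar{\mathbf{h}}_k+\mathbf{e}_{k,i}(\ell_k)$ and $\mathbf{H}_{k,i}(\ell_k)=\bar{\mathbf{H}}_k+\mathbf{E}_{k,i}(\ell_k)$, so the common part cancels in the difference and only the gossip errors survive.

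Next I would bound each descent against the common reference $\bar{\mathbf{H}}_k^{-1}\bar{\mathbf{h}}_k$. Writing $\mathbf{d}_i^k(\ell_k)-\bar{\mathbf{H}}_k^{-1}\bar{\mathbf{h}}_k = \mathbf{H}_{k,i}^{-1}(\ell_k)\big(\mathbf{e}_{k,i}(\ell_k)-\mathbf{E}_{k,i}(\ell_k)\,\bar{\mathbf{H}}_k^{-1}\bar{\mathbf{h}}_k\big)$ and invoking Lemma \ref{lem_gossip_error} together with the eigenvalue bounds of Assumption \ref{lipshitz}, namely $\|\mathbf{Q}^{-1}(\mathbf{x})\|\le C_2$ and $\|\bar{\mathbf{H}}_k^{-1}\bar{\mathbf{h}}_k\|\le \sigma_{\max}\epsilon_{\max}/\sigma_{\min}^2$, I would obtain (provided the conditioning $\|\mathbf{H}_{k,i}^{-1}(\ell_k)\|\le 2C_2$ holds) a bound of the form $\|\mathbf{d}_i^k(\ell_k)-\bar{\mathbf{H}}_k^{-1}\bar{\mathbf{h}}_k\|\le C_1C_2\,C\lambda_\eta^{\ell_k}$, where $C_1$ collects exactly the factor $2(1+\sigma_{\max}\epsilon_{\max}/\sigma_{\min}^2)$. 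Two such terms through the triangle inequality give $\|\mathbf{d}_i^k(\ell_k)-\mathbf{d}_j^k(\ell_k)\|\le 2CC_1C_2\lambda_\eta^{\ell_k}$, and hence the recursion $b_{k+1}\le b_k + 2\alpha CC_1C_2\lambda_\eta^{\ell_k}$.

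The delicate step is justifying the conditioning $\|\mathbf{H}_{k,i}^{-1}(\ell_k)\|\le 2C_2$ uniformly in $k$. Since $\mathbf{H}_{k,i}(\ell_k)=\mathbf{Q}(\mathbf{x}_i^k)+\bdsb{\Delta}_{k,i}+\mathbf{E}_{k,i}(\ell_k)$ is close to the well-conditioned $\mathbf{Q}(\mathbf{x}_i^k)$ only when both the gossip error and the surrogate mismatch $\bdsb{\Delta}_{k,i}$ are small, and by \eqref{lipschitz_delta_Delta} the latter is governed by $\nu_\Delta b_k$, i.e. by the very disagreement we are bounding, this forces a bootstrapping (strong-induction) argument. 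Assuming the claimed bound on $b_k$ holds for all preceding updates, I would first deduce the uniform estimate $b_k\le \xi CC_1C_2\lambda_\infty/D$ using $\sum_k\lambda_\eta^{\ell_k-\ell_{\min}}\le\lambda_\infty$ from Assumption \ref{ell_ass}, and then show $\|\bdsb{\Delta}_{k,i}\|+\|\mathbf{E}_{k,i}(\ell_k)\|\le \xi/C_2 < 1/(2C_2)$. This is precisely where the definition $D = CC_2(\nu\lambda_\infty C_1C_2+1)$ is engineered: substituting it makes the governing ratio strictly less than one, so that a Neumann-series perturbation bound yields $\|\mathbf{H}_{k,i}^{-1}(\ell_k)\|\le 2C_2$ and closes the induction.

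Finally, with the recursion validated at every step, I would telescope from $b_0=0$ to obtain $b_K\le 2\alpha CC_1C_2\,\lambda_\eta^{\ell_{\min}}\sum_{k=0}^{K-1}\lambda_\eta^{\ell_k-\ell_{\min}}$, and then invoke the choice of $\ell_{\min}$ in \eqref{denominator}, which guarantees $\lambda_\eta^{\ell_{\min}}\le\xi/(4D)$. Since $\alpha\le 1$, the coefficient satisfies $2\alpha\lambda_\eta^{\ell_{\min}}\le \xi/(2D)\le \xi/D$, reproducing the first claimed bound; the second, cruder estimate then follows from the near-tightness of the ceiling in $\ell_{\min}$. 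I expect the main obstacle to be the self-consistency of this conditioning argument: one must verify that the uniform bound on $b_k$ assumed in the induction is exactly the one the recursion delivers, and that $D$ is chosen large enough to absorb the feedback of $b_k$ into $\bdsb{\Delta}_{k,i}$ without destroying the geometric decay in $\ell_k$.
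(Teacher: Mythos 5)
Your proposal is correct and its skeleton coincides with the paper's: induction (bootstrap) over the update index $k$, a Neumann-type perturbation bound to control the inverse of the perturbed Hessian, the observation that the definition of $D$ in \eqref{denominator} is engineered precisely so that $\left\|\mathbf{Q}^{-1}(\mathbf{x}_i^k)\left(\bdsb{\Delta}_{k,i}+\mathbf{E}_{k,i}(\ell_k)\right)\right\|<\xi<1/2$ closes the loop, and a final telescoping from the common initializer together with $\lambda_\eta^{\ell_{\min}}\leq \xi/(4D)$. Where you genuinely depart from the paper is in the decomposition of the descent discrepancy: the paper expands both $\left[\bar{\mathbf{H}}_k+\mathbf{E}_{k,i}(\ell_k)\right]^{-1}$ and $\left[\bar{\mathbf{H}}_k+\mathbf{E}_{k,j}(\ell_k)\right]^{-1}$ as infinite series and splits the difference $\mathbf{d}_i^k(\ell_k)-\mathbf{d}_j^k(\ell_k)$ into three terms $\mathbf{D}_1,\mathbf{D}_2,\mathbf{D}_3$ that are bounded separately (including geometric sums of differences of matrix powers), whereas you compare each agent's descent to the common reference $\bar{\mathbf{H}}_k^{-1}\bar{\mathbf{h}}_k$ via the exact resolvent identity $\mathbf{d}_i^k(\ell_k)-\bar{\mathbf{H}}_k^{-1}\bar{\mathbf{h}}_k=\mathbf{H}_{k,i}^{-1}(\ell_k)\bigl(\mathbf{e}_{k,i}(\ell_k)-\mathbf{E}_{k,i}(\ell_k)\bar{\mathbf{H}}_k^{-1}\bar{\mathbf{h}}_k\bigr)$ and the triangle inequality. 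Your identity is cleaner: it needs only the single conditioning estimate $\|\mathbf{H}_{k,i}^{-1}(\ell_k)\|\leq 2C_2$ (which you correctly recognize requires the same bootstrap through $\bdsb{\Delta}_{k,i}$ and \eqref{lipschitz_delta_Delta}, since $\bar{\mathbf{H}}_k=\mathbf{Q}(\mathbf{x}_i^k)+\bdsb{\Delta}_{k,i}$ at general $k$), and it avoids the paper's repeated series manipulations such as \eqref{second_term} and \eqref{third_term}. One bookkeeping caveat: at general $k$ you only get $\|\bar{\mathbf{H}}_k^{-1}\|\leq 2C_2$, not $C_2$, so the honest per-step constant is $4CC_2\left(1+2\sigma_{\max} \epsilon_{\max} /\sigma_{\min}^2\right)\lambda_\eta^{\ell_k}$ rather than your quoted $2CC_1C_2\lambda_\eta^{\ell_k}$; since $1+2x\leq 2(1+x)$ for $x\geq0$ and you have the extra slack factors $\alpha\leq 1$ and $1/2$ from $\lambda_\eta^{\ell_{\min}}\leq\xi/(4D)$, your telescoped bound still lands within the lemma's claimed constants, so this is cosmetic rather than a gap.
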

\begin{proof}
	See Appendix \ref{proof_lem_disagreement}.
\end{proof}

\begin{thm}\label{proposition_discrepancy}
Under Assumption  \ref{lipshitz}, \ref{connectivity_frequency}, \ref{weight_matrix} and \ref{ell_ass}, Given Lemma \ref{weight_matrix_jroperty}, \ref{lem_gossip_error} and \ref{lem_disagreement}, the discrepancy between the inexact and the exact descent is bounded for all $i$ and $k$
\begin{align}
	\left\|\mathbf{d}_i^k(\ell_k) - \mathbf{d}_i^k\right\|
	<\kappa,
\end{align}
by the finite perturbation $\kappa \triangleq 4 C_1 D \lambda_\eta^{(\ell_{\min}+1)}$, whose magnitude vanishes exponentially with respect to the minimum number of gossip exchanges $\lim_{\ell_{\min}\rightarrow\infty} \kappa = 0$.
\end{thm}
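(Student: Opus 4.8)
The plan is to read the discrepancy $\mathbf{d}_i^k(\ell_k)-\mathbf{d}_i^k$ as the error of a perturbed linear solve and to control it by a matrix-inverse perturbation argument. Abbreviate the exact quantities at the current iterate by $\mathbf{Q}=\mathbf{Q}(\mathbf{x}_i^k)=\tfrac1I\mathbf{G}^T(\mathbf{x}_i^k)\mathbf{G}(\mathbf{x}_i^k)$ and $\mathbf{q}=\mathbf{q}(\mathbf{x}_i^k)$, so that $\mathbf{d}_i^k=\mathbf{Q}^{-1}\mathbf{q}$ by \eqref{exact_local_descent}. Using the decomposition \eqref{h_H_decomp}, the gossiped quantities are $\mathbf{H}_{k,i}(\ell_k)=\mathbf{Q}+\bigl(\bdsb{\Delta}_{k,i}+\mathbf{E}_{k,i}(\ell_k)\bigr)$ and $\mathbf{h}_{k,i}(\ell_k)=\mathbf{q}+\bigl(\bdsb{\delta}_{k,i}+\mathbf{e}_{k,i}(\ell_k)\bigr)$. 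First I would split the error through the resolvent identity $\mathbf{H}^{-1}-\mathbf{Q}^{-1}=-\mathbf{H}^{-1}(\mathbf{H}-\mathbf{Q})\mathbf{Q}^{-1}$, obtaining
\begin{align*}
\mathbf{d}_i^k(\ell_k)-\mathbf{d}_i^k
= \mathbf{H}_{k,i}^{-1}(\ell_k)\bigl(\bdsb{\delta}_{k,i}+\mathbf{e}_{k,i}(\ell_k)\bigr)
- \mathbf{H}_{k,i}^{-1}(\ell_k)\bigl(\bdsb{\Delta}_{k,i}+\mathbf{E}_{k,i}(\ell_k)\bigr)\mathbf{Q}^{-1}\mathbf{q},
\end{align*}
which isolates the vector perturbation $\bdsb{\delta}_{k,i}+\mathbf{e}_{k,i}(\ell_k)$ and the matrix perturbation $\bdsb{\Delta}_{k,i}+\mathbf{E}_{k,i}(\ell_k)$ as the only sources of discrepancy.

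Next I would bound the three stable factors using Assumption \ref{lipshitz}. Since the eigenvalues of $\mathbf{Q}$ are those of $\mathbf{G}^T\mathbf{G}$ scaled by $1/I$, we get $\|\mathbf{Q}^{-1}\|\leq I/\sigma_{\min}^2=C_2$ and $\|\mathbf{Q}^{-1}\mathbf{q}\|=\|\mathbf{d}_i^k\|\leq \sigma_{\max}\epsilon_{\max}/\sigma_{\min}^2$. The crucial factor is $\|\mathbf{H}_{k,i}^{-1}(\ell_k)\|$: by a Neumann-series (Banach) perturbation bound, as soon as the matrix perturbation obeys $\|\bdsb{\Delta}_{k,i}+\mathbf{E}_{k,i}(\ell_k)\|\leq \tfrac12\lambda_{\min}(\mathbf{Q})=1/(2C_2)$, the matrix $\mathbf{H}_{k,i}(\ell_k)$ remains invertible with $\|\mathbf{H}_{k,i}^{-1}(\ell_k)\|\leq 2C_2$. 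Taking norms in the identity and factoring out the constant $2C_2\bigl(1+\sigma_{\max}\epsilon_{\max}/\sigma_{\min}^2\bigr)=C_1C_2$ (with $C_1,C_2$ as in \eqref{C_infty}) then gives
\begin{align*}
\left\|\mathbf{d}_i^k(\ell_k)-\mathbf{d}_i^k\right\|
\leq C_1C_2\max\bigl\{\left\|\bdsb{\delta}_{k,i}+\mathbf{e}_{k,i}(\ell_k)\right\|,\ \left\|\bdsb{\Delta}_{k,i}+\mathbf{E}_{k,i}(\ell_k)\right\|\bigr\}.
\end{align*}

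It remains to bound the two perturbations uniformly in $k$ and $i$. For the gossip terms, Lemma \ref{lem_gossip_error} yields $\|\mathbf{e}_{k,i}(\ell_k)\|,\|\mathbf{E}_{k,i}(\ell_k)\|\leq C\lambda_\eta^{\ell_k}$ with $C$ from \eqref{C}. For the surrogate mismatch, \eqref{lipschitz_delta_Delta} bounds $\|\bdsb{\delta}_{k,i}\|,\|\bdsb{\Delta}_{k,i}\|$ by $\tfrac{\nu}{I}\sum_{j}\|\mathbf{x}_i^k-\mathbf{x}_j^k\|$ with $\nu=\max\{\nu_\delta,\nu_\Delta\}$, and the disagreement itself is controlled by Lemma \ref{lem_disagreement} as $\|\mathbf{x}_i^k-\mathbf{x}_j^k\|\leq 4CC_1C_2\sum_{m=0}^{k-1}\lambda_\eta^{\ell_m+1}$. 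Summing the geometric tail via $\sum_{m}\lambda_\eta^{\ell_m-\ell_{\min}}\leq\lambda_\infty$ from Assumption \ref{ell_ass} turns every contribution into a multiple of $\lambda_\eta^{\ell_{\min}+1}$; collecting the constants into $D=CC_2(\nu\lambda_\infty C_1C_2+1)$ and multiplying by $C_1C_2$ reproduces exactly the claimed $\kappa=4C_1D\lambda_\eta^{\ell_{\min}+1}$. Because $\lambda_\eta\in(0,1)$, this bound vanishes as $\ell_{\min}\to\infty$, giving $\lim_{\ell_{\min}\to\infty}\kappa=0$.

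The main obstacle I anticipate is justifying the uniform invertibility bound $\|\mathbf{H}_{k,i}^{-1}(\ell_k)\|\leq 2C_2$, since the matrix perturbation $\bdsb{\Delta}_{k,i}$ that must stay below $1/(2C_2)$ itself depends on the inter-agent disagreement, whose estimate in Lemma \ref{lem_disagreement} is in turn derived from a descent-error bound — a potential circularity. The resolution is the design rule \eqref{denominator}: choosing $\ell_{\min}$ so that $4D\lambda_\eta^{\ell_{\min}}\leq\xi<1/2$ forces $C_2\|\bdsb{\Delta}_{k,i}+\mathbf{E}_{k,i}(\ell_k)\|<\tfrac12$, which simultaneously keeps $\mathbf{H}_{k,i}(\ell_k)$ safely invertible with the desired norm bound and closes the estimate self-consistently. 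I would therefore present the invertibility step explicitly tied to the condition $\xi<1/2$, emphasizing that it is precisely this threshold that makes $\|\mathbf{H}_{k,i}^{-1}(\ell_k)\|\leq 2C_2$ hold uniformly over all $k$ and $i$.
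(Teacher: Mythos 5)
Your proof is correct, and while it rests on the same skeleton as the paper's — the decomposition \eqref{h_H_decomp}, the gossip-error bound of Lemma \ref{lem_gossip_error}, the disagreement bound of Lemma \ref{lem_disagreement}, and the design rule \eqref{denominator} for $D$ and $\ell_{\min}$ — your handling of the perturbed inverse is genuinely different and cleaner. The paper expands $\left[\mathbf{Q}(\mathbf{x}_i^k)+\bdsb{\Delta}_{k,i}+\mathbf{E}_{k,i}(\ell_k)\right]^{-1}$ via the full Neumann series \eqref{expansion}, groups the result into three terms (the zeroth-order term acting on $\bdsb{\delta}_{k,i}+\mathbf{e}_{k,i}(\ell_k)$, the higher-order series acting on $\mathbf{Q}^{-1}\mathbf{q}$, and the series acting on the vector perturbation), and sums each geometric series separately; your one-step resolvent identity $\mathbf{H}^{-1}-\mathbf{Q}^{-1}=-\mathbf{H}^{-1}(\mathbf{H}-\mathbf{Q})\mathbf{Q}^{-1}$ collapses this to a two-term split in which all series bookkeeping is replaced by the single uniform bound $\|\mathbf{H}_{k,i}^{-1}(\ell_k)\|\leq 2C_2$. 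What the paper's route buys is an order-by-order display of the error; what yours buys is brevity and a transparent identification of the two perturbation sources. The smallness condition you need is exactly what the paper verifies in \eqref{R_bound}, namely $\|\mathbf{Q}^{-1}(\mathbf{x}_i^k)(\bdsb{\Delta}_{k,i}+\mathbf{E}_{k,i}(\ell_k))\|<\xi<1/2$ via the definition of $D$ in \eqref{denominator}, so your circularity worry is resolved the same way the paper resolves it — and note that no circularity actually arises in the paper's logical order, since Lemma \ref{lem_disagreement} is established independently by induction before Theorem \ref{proposition_discrepancy} is invoked.

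Two small points. First, your threshold should read $\lambda_{\min}(\mathbf{Q}(\mathbf{x}_i^k))\geq 1/C_2$ (a lower bound from Assumption \ref{lipshitz}, not an equality), and if you argue via eigenvalue perturbation rather than the Neumann bound you should note that $\mathbf{H}_{k,i}(\ell_k)$ is symmetric, being a weighted sum of the symmetric matrices $\mathbf{G}_j^T\mathbf{G}_j$, so Weyl's inequality applies. Second, your final constant collection inherits an off-by-one slack that is present in the paper itself: converting the $C_1\xi$-type estimate into $\kappa=4C_1D\lambda_\eta^{(\ell_{\min}+1)}$ uses both $\lambda_\eta^{\ell_{\min}}\leq \xi/(4D)$ (from the ceiling in \eqref{denominator}) and $\xi\leq 4D\lambda_\eta^{(\ell_{\min}+1)}$, which cannot hold simultaneously when $\lambda_\eta<1$; this affects only the constant prefactor of $\kappa$, not the claimed exponential decay $\lim_{\ell_{\min}\rightarrow\infty}\kappa=0$, and since you mirror the paper's own bookkeeping here it is not a gap in your argument.
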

\begin{proof}
	See Appendix \ref{proof_jroposition_discrepancy}.
\end{proof}

Theorem \ref{theorem_convergence} and \ref{proposition_discrepancy} indicate that if the exchanges $\ell_k$'s are large, then $\kappa\rightarrow 0$ and the recursion approaches the centralized version.  Note that Theorem \ref{theorem_convergence} and \ref{proposition_discrepancy} are proven using very pessimistic bounds. In Section \ref{numerical_results} the numerical results show the algorithm behaves well even with link failures, in spite of not meeting all the conditions and assumption stated.


\section{Application : Power System State Estimation}\label{PSSE}
A power network is characterized by vertices (called {\it buses}) representing simple interconnections, generators or loads, denoted by the set $\mathcal{N}\triangleq\{1,\cdots,N\}$. Transmission lines connecting these buses constitute the power grid topology, denoted by the edge set $\mathcal{E}$ with cardinality $|\mathcal{E}|=L$. 
The electrical parameters of the grid are characterized by the admittance matrix $\mathbf{Y}=[-Y_{nm}]_{N\times N}$, where $Y_{nm}=G_{nm}+\mathrm{i}B_{nm}$, $\{n,m\}\in\mathcal{E}$ is the line admittance, and the shunt admittance $\bar{Y}_{nm}=\bar{G}_{nm}+\mathrm{i}\bar{B}_{nm}$ associated with the $\Pi$-model\footnote{The $\Pi$-model is a circuit equivalent of a transmission line by abstracting two electric buses as a two-port network in the shape of a $\Pi$ connection \cite{monticelli1999state}.} of each transmission line $\{n,m\}\in\mathcal{E}$. Note that $Y_{nn}= - \sum_{l\neq n} (\bar{Y}_{nm} + Y_{nm})$ is defined as the self-admittance. 
The state of the power system corresponds to the voltage phasors at all buses, described by voltage phase and magnitude $\mathbf{x}=[\bdsb{\Theta}^T,\mathbf{V}^T]^T$, where $\bdsb{\Theta}\triangleq[\theta_1, \cdots,\theta_N]^T$ is the phase vector with $\theta_1$ being the slack bus phase, and $\mathbf{V}\triangleq \left[V_1,\cdots,V_N\right]^T$ contains the magnitude.

\begin{figure}
\centering
\includegraphics[width=0.9\linewidth]{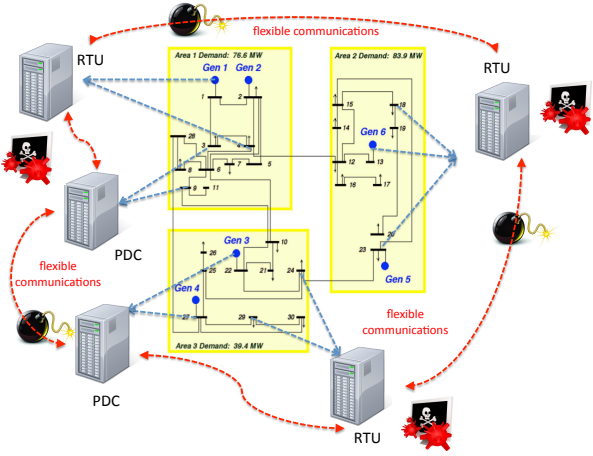}
\caption{Multi-site structure in IEEE-30 test case}\label{fig.IEEE_30_division}
\end{figure}

\subsection{Power Measurement Models}
Power measurements include the {\it active/reactive power injection} $(P_n,Q_n)$ for buses $n\in\mathcal{N}$, and the {\it active/reactive power flows} $(P_{nm},Q_{nm})$ on transmission lines $(n,m)\in\mathcal{E}$. The ensemble of these quantities can be stacked into the length-$2N$ power injection vector $\mathbf{f}_{\mathcal{I}}(\mathbf{x})$, as well as the length-$4L$ line flow vector $\mathbf{f}_{\mathcal{F}}(\mathbf{x})$ respectively
\begin{align}
	\mathbf{f}_{\mathcal{I}}(\mathbf{x}) &= [P_1(\mathbf{x}),\cdots,P_N(\mathbf{x}), Q_1(\mathbf{x}),\cdots,Q_N(\mathbf{x})]^T\\
	\mathbf{f}_{\mathcal{F}}(\mathbf{x}) &= [\cdots,P_{nm}(\mathbf{x}),\cdots,\cdots,Q_{nm}(\mathbf{x}), \cdots]^T
\end{align}
and expressed in relation to the state $\mathbf{x}$ as in \cite{monticelli1999state} 
\begin{align*}
	P_n(\mathbf{x})
	&= V_n\sum_{m\neq n}^{N} V_m \left( G_{nm}\cos\theta_{nm} + B_{nm} \sin\theta_{nm}\right)\\
	Q_n(\mathbf{x})
	&= V_n\sum_{m\neq n}^{N} V_m \left( G_{nm}\sin\theta_{nm} - B_{nm} \cos\theta_{nm}\right)\\
	P_{nm}(\mathbf{x})
	&= V_n^2G_{nm} - V_nV_m \left(G_{nm}\cos\theta_{nm} + B_{nm} \sin\theta_{nm}\right)\\
	Q_{nm}(\mathbf{x})
	&= -V_n^2B_{nm} - V_nV_m \left(G_{nm}\sin\theta_{nm} - B_{nm} \cos\theta_{nm}\right),
\end{align*}
where $\theta_{nm}=\theta_n-\theta_m$. 
By stacking the power flow equations and the measurements into vectors $\mathbf{f}(\mathbf{x}) \triangleq [ \mathbf{f}_{\mathcal{I}}^T(\mathbf{x}),\mathbf{f}_{\mathcal{F}}^T(\mathbf{x})]^T$ and $\mathbf{z}  \triangleq [\mathbf{z}_{\mathcal{I}}^T, \mathbf{z}_{\mathcal{F}}^T]^T$, the measurement ensemble in the presence of measurement error $\bdsb{\varepsilon}\triangleq [ \bdsb{\varepsilon}_{\mathcal{I}}^T, \bdsb{\varepsilon}_{\mathcal{F}}^T]^T$ is
\begin{align}\label{meas-model_all}
	\mathbf{z} = \mathbf{f}(\bar{\mathbf{x}}) + \bdsb{\varepsilon},
\end{align}
where $\bar{\mathbf{x}}=[\bar{\theta}_1,\cdots,\bar{\theta}_N,\bar{V}_1,\cdots,\bar{V}_N]^T$ is the true state.

%
%

\subsection{Formulation and Solution for the PSSE}
A reasonable abstraction of the data acquisition architecture in power systems is as an interconnected multi-site infrastructure, with $I$ {\it sites} in which the $i$-th {\it site} covers a subset of buses $n\in\mathcal{N}_i$ satisfying $\mathcal{N}_j\bigcap \mathcal{N}_i = \varnothing$ and $\mathcal{N}_i,\mathcal{N}_j\subset\mathcal{N}$ (Fig. \ref{fig.IEEE_30_division}). The $i$-th {\it site} temporally aligns and aggregates a snapshot of $M_i$ local measurements of $\{z_{i,m}\}_{m=1}^{M_i}$ within the site or on the lines that connect its own site with others. The $i$-th site's measurements are selected from the ensemble in \eqref{meas-model_all} as
\begin{align}\label{meas-model}
	\mathbf{z}_i = \mathbf{T}_i\mathbf{z} = \mathbf{f}_i(\mathbf{x}) + \mathbf{T}_i\bdsb{\varepsilon},
\end{align}
where $\mathbf{f}_i(\mathbf{x})\triangleq \mathbf{T}_i\mathbf{f}(\mathbf{x})$, and $\mathbf{T}_i \triangleq \mathrm{diag}[
\mathbf{T}_{i,\mathcal{I}},\mathbf{T}_{i,\mathcal{F}}]$ is a block diagonal binary matrix selecting the corresponding measurements at the $i$-th site. Specifically, $\mathbf{T}_{i,\mathcal{I}}\in\{0,1\}^{M_{i,\mathcal{I}}\times 2N}$ and $\mathbf{T}_{i,\mathcal{F}}\in\{0,1\}^{M_{i,\mathcal{F}}\times 4L}$ are selection matrices with each row having only one ``$1$" entry located at the index of the corresponding element in $\mathbf{f}(\mathbf{x})$ measured by field devices. The number of measurements recorded by each agent is $M_i=
M_{i,\mathcal{I}}+M_{i,\mathcal{F}} = \mathrm{Tr}(\mathbf{T}_i^T\mathbf{T}_i)$.

The universally accepted problem formulation for static state estimation is to solve a weighted NLLS problem that fits the estimated state to the power measurements \cite{schweppe1974static, larson1970state}. Assuming $\mathbb{E}\{\bdsb{\varepsilon}\bdsb{\varepsilon}^T\}=\sigma^2\mathbf{I}$, the state is estimated as 
\begin{align}\label{centralized_SE}
	 \widehat{\mathbf{x}} = \underset{\mathbf{x}\in\mathbb{X}}{\min}&
	 ~~\sum_{i=1}^{I} \left\|\mathbf{z}_i-\mathbf{f}_i(\mathbf{x})\right\|^2
\end{align}
where $\mathbb{X} \triangleq \left\{ \theta_n \in [-\theta_{\max},\theta_{\max}], V_n\in [0, V_{\max}], n\in\mathcal{N}\right\}$ with $\theta_{\max}$ and $V_{\max}$ being the phase angle and voltage limit. By letting $\mathbf{g}_i(\mathbf{x}) \triangleq \mathbf{z}_i-\mathbf{f}_i(\mathbf{x})$ and $\mathbf{G}_i(\mathbf{x}) \triangleq -{\partial\mathbf{f}_i(\mathbf{x})}/{\partial\mathbf{x}^T}$, the problem can be solved using the proposed GGN algorithm.

In many practical scenarios \cite{kar2008distributed,cattivelli2008diffusion,cattivelli2010diffusion,lopes2008diffusion}, many similar NLLS problems in a network take the form
\begin{align}\label{adaptive}
	\widehat{\mathbf{x}}[t] = \underset{\mathbf{x}\in\mathbb{X}}{\min}&
	 ~~\sum_{i=1}^{I} \left\|\mathbf{z}_i[t]-\mathbf{f}_i(\mathbf{x})\right\|^2,
\end{align}
where $\mathbf{z}_i[t]\in\mathbb{R}^{M_i}$ is a snapshot of measurements taken at agent $i$ at time $t$. In this scenario, the GGN algorithm can be readily applied to track the state by initializing $\mathbf{x}_i^0[t]$ with the previous local estimate $\widehat{\mathbf{x}}_i[t]$. In the following, we show the performance of the GGN algorithm in estimating and tracking the state of power systems using real-time measurements.

\section{Numerical Results}\label{numerical_results}

\nocite{UK_grid}

In this section, we compare the GGN algorithm cost in \eqref{centralized_SE} and Mean Square Error (MSE) with that of the algorithms \cite{xie2012fully}. We also show numerically that the GGN algorithm can process measurement adaptively and compare it against the method in \cite{kar2008distributed}. Given the distributed estimates $\{\widehat{V}_{i,n}^{(k)},\widehat{\theta}_{i,n}^{(k)}\}$ at each update, the local MSE relative to $\bar{V}_n$'s and $\bar{\theta}_n$'s is
\begin{align}
    \mathrm{MSE}_{V,i}^{(k)}  &= \mathbb{E}\left\{ \sum_{n=1}^N (\widehat{V}_{i,n}^{(k)}-\bar{V}_n)^2 \right\}/N\label{MSE_V}\\
    \mathrm{MSE}_{\Theta,i}^{(k)}  &=  \mathbb{E}\left\{ \sum_{n=1}^N (\widehat{\theta}_{i,n}^{(k)}-\bar{\theta}_n)^2  \right\}/N.\label{MSE_Theta}
\end{align}
The results are averaged over $10^3$ experiments. We also show the MSE  of the GGN algorithm using the URE protocol in the presence of random link failures.

We considered the IEEE-30 bus ($N=30$) model in MATPOWER 4.0. The initialization is $1$ for voltage magnitude and $0$ for the phase. We take one snapshot of the load profile from the UK National Grid load curve from \cite{UK_grid} and scale the base load from MATPOWER on the load buses. Then we run the Optimal Power Flow (OPF) program to determine the generation dispatch for that snapshot. This gives us the true state $\bar{\mathbf{x}}$ and $\mathbf{f}(\bar{\mathbf{x}})$ in per unit (p.u.) values. Measurements $\{\mathbf{z}_i\}_{i=1}^{I}$ by are generated adding independent Gaussian errors $\varepsilon_{i,m}\sim \mathcal{N}(0,\sigma^2)$ with $\sigma^2=10^{-6}$.

\begin{figure}[t]
\begin{center}
{\subfigure[][Objective value]{\resizebox{0.45\textwidth}{!}{\includegraphics{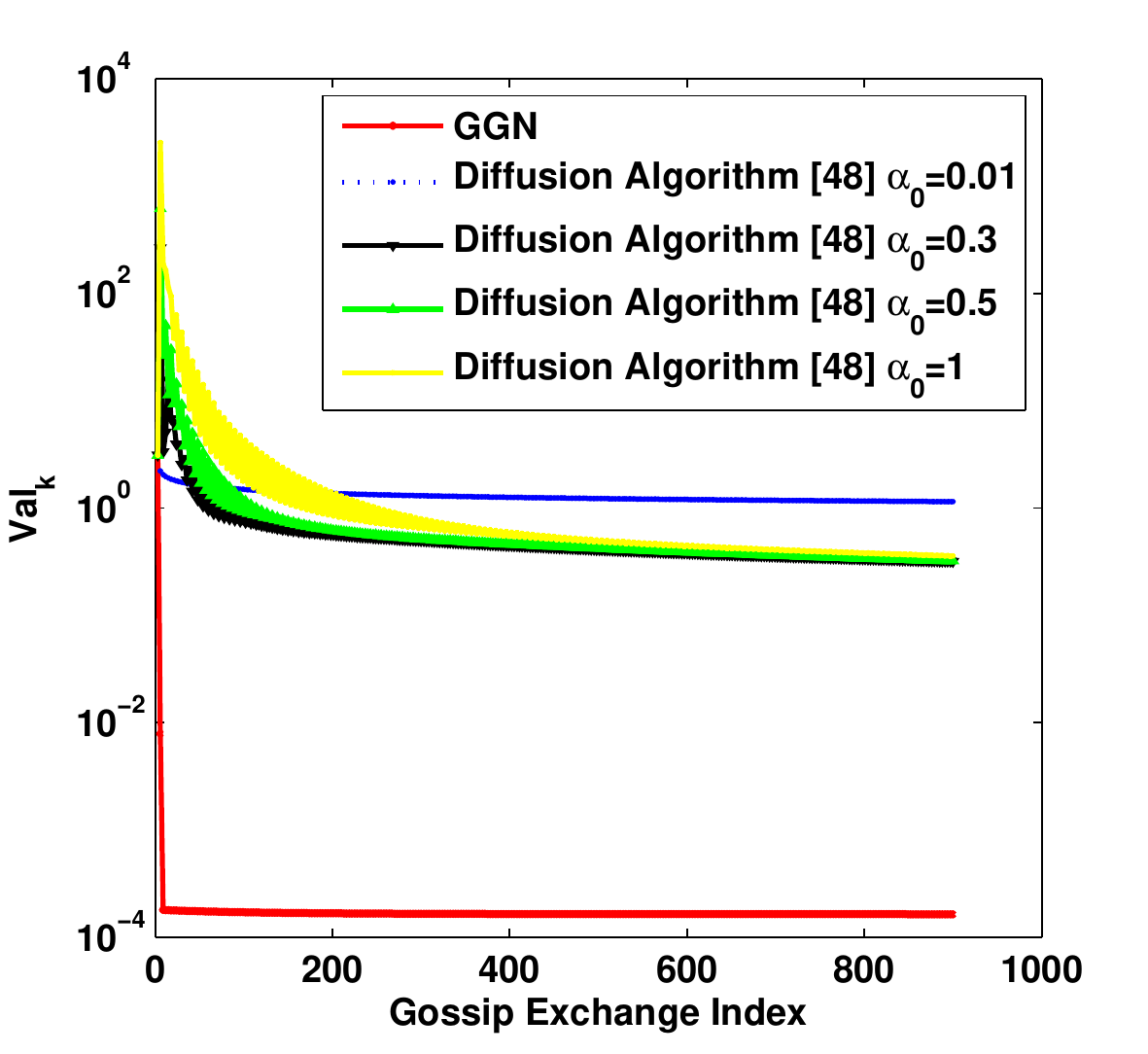}\label{fig.diff_obj1}}}}\\
\vspace{-0.5cm}
{\subfigure[][Gradient norm]{\resizebox{0.45\textwidth}{!}{\includegraphics{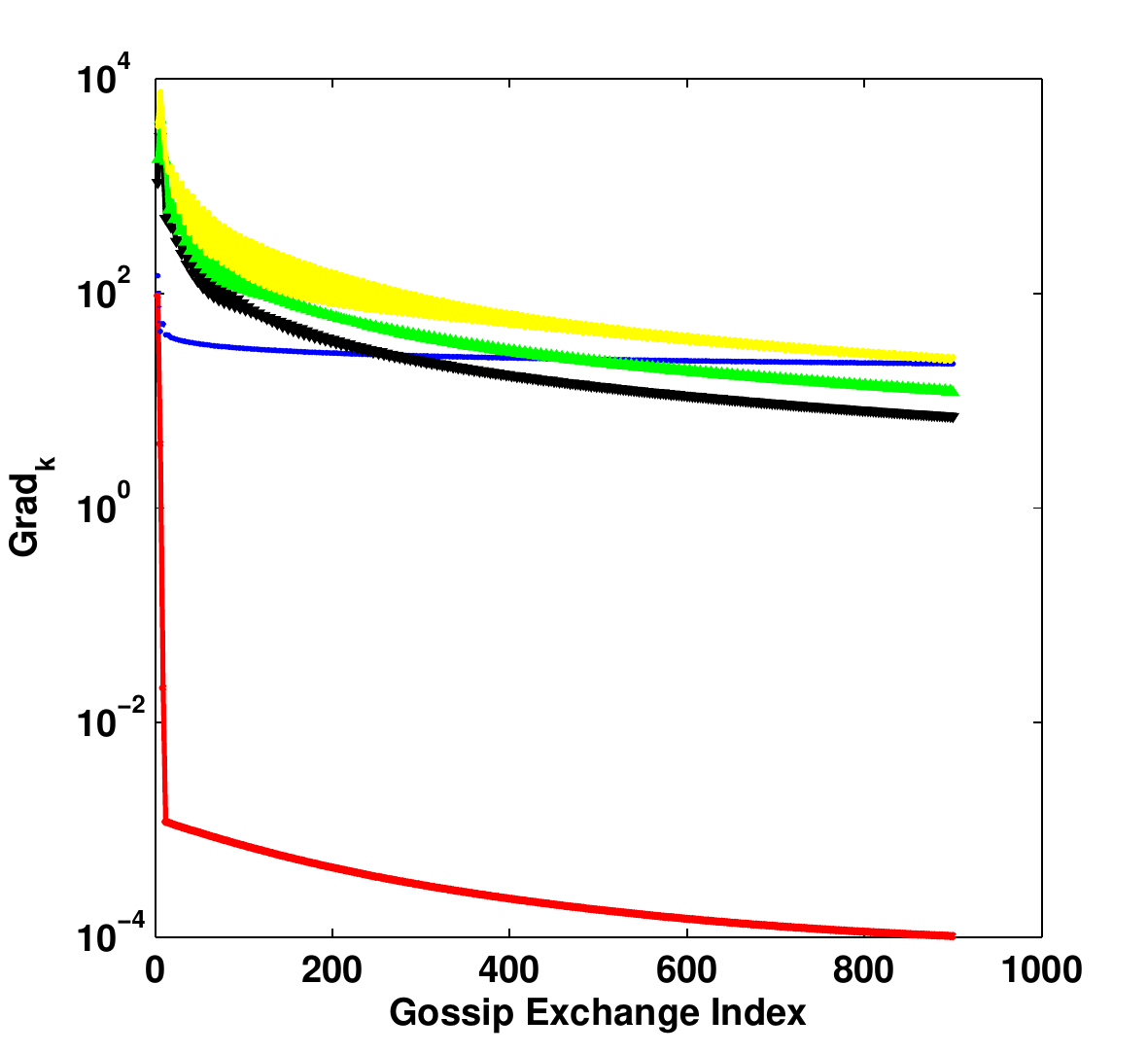}\label{fig.diff_grad}}}}
\end{center}\vspace{-0.1cm}
\caption{Comparison between the GGN algorithm (CSE Protocol) and diffusion algorithm in \cite{xie2012fully} with $\ell_{\min}=3$ exchanges}
\vspace{-0.4cm}
\end{figure}

\subsection{Comparison with Diffusion Algorithms under CSE Protocol}
Here we evaluate the performance of the GGN algorithm against the diffusion algorithm for PSSE in \cite{xie2012fully}, and its extension to adaptive processing in \cite{kar2008distributed}. To make a fair comparison in terms of communication costs and accuracy, we exploit the CSE protocol used in \cite{xie2012fully,kar2008distributed} for our method, where the exchange is coordinated and synchronous. For simplicity, we divide the system into $I=3$ sites as in Fig. \ref{fig.IEEE_30_division} and the communication graph is fully connected, giving an adjacency matrix $\mathbf{A}= \mathbf{1}_I\mathbf{1}_I^T-\mathbf{I}$. The weight matrix is constructed according to the Laplacian $\mathbf{L}= \mathrm{diag}(\mathbf{A}\mathbf{1}_I)-\mathbf{A}$ as $\mathbf{W}= \mathbf{I}_I - w \mathbf{L}$ with $w=\beta/\max(\mathbf{A}\mathbf{1}_I)$ and $\beta = 0.3$. The step-size is $\alpha_{\rm GGN}=0.5$ for the GGN algorithm while $\alpha_{\textrm{diff},\ell} = 0.01 \ell^{-1}, 0.3 \ell^{-1}, 0.5 \ell^{-1}, \ell^{-1}$ for \cite{xie2012fully,kar2008distributed}. The network diffusion algorithm takes place at each exchange $\ell$, while the GGN algorithm runs $\ell_k=\ell_0=\ell_{\min}=3$ exchanges for each update. Therefore, the exchange index $\ell$ in the GGN algorithm is the remainder of the index $\ell$ in the diffusion algorithm divided by $3$.

\subsubsection{Estimation on Static Measurements}
In this subsection, we show the comparison between our approach and that in \cite{xie2012fully} over $900$ exchanges overall. In particular, the comparison is on the global objective \eqref{centralized_SE} evaluated with local estimates
\begin{align}\label{Val}
	\mathrm{Val}_k = \sum_{i=1}^{I} \left\|\mathbf{z}_i-\mathbf{f}_i(\mathbf{x}_i^k)\right\|^2
\end{align}
and the following term to evaluate the optimality in \eqref{fixed_joint}
\begin{align}
	\mathrm{Grad}_k=\sum_{i=1}^I \left\|\mathbf{G}_i^T(\mathbf{x}_i^k)(\mathbf{z}_i-\mathbf{f}_i(\mathbf{x}_i^k))\right\|.
\end{align}
which are plotted against the total number of gossip exchanges so that the comparison is performed on the same time scale.

\begin{figure*}[!t]
\begin{center}
{\subfigure[][Objective value]{\resizebox{0.32\textwidth}{!}{\includegraphics{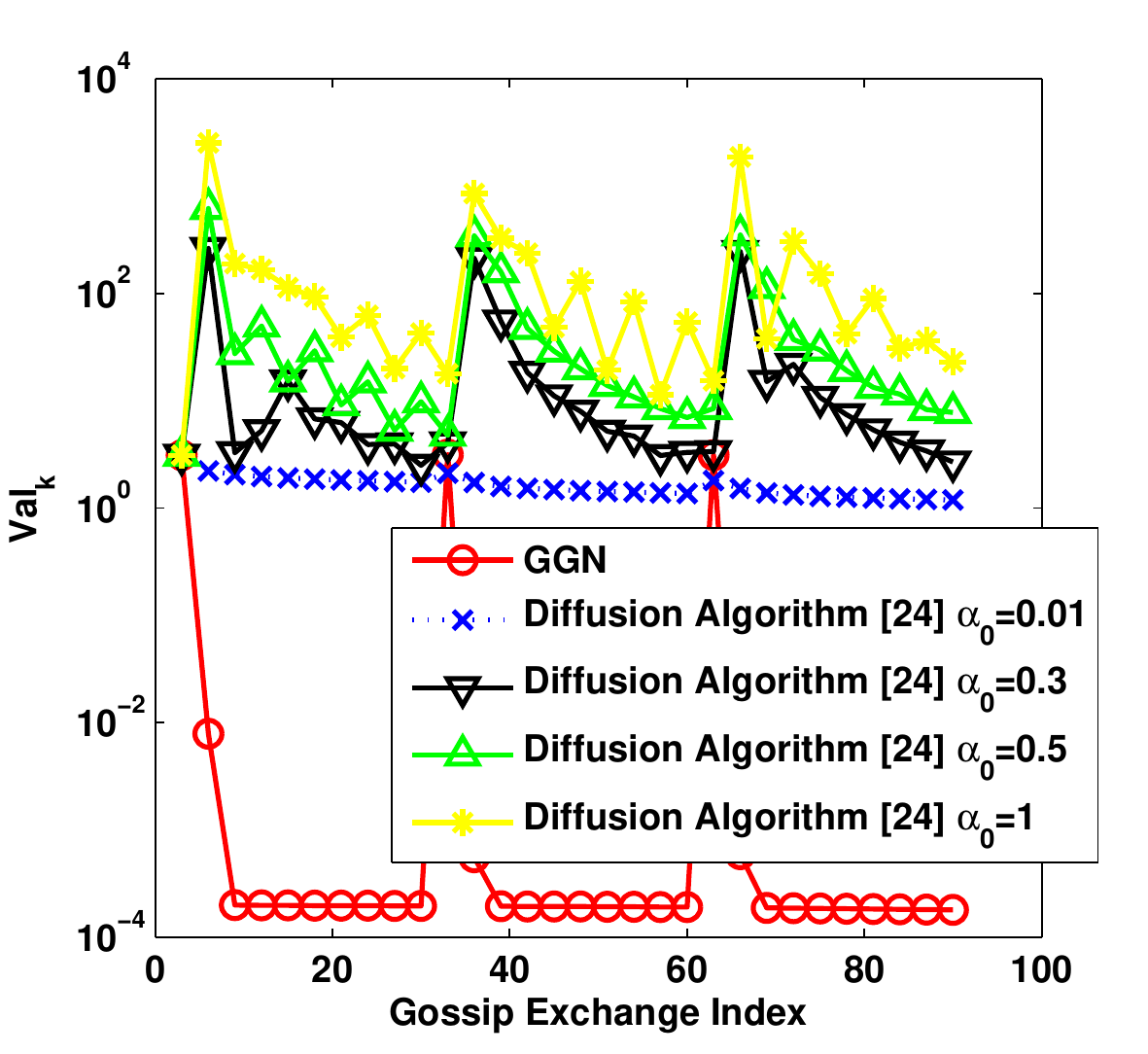}\label{fig.diff_obj_dynamic}}}}
\vspace{-0.2cm}
{\subfigure[][$\mathrm{MSE}_V$ Comparison]{\resizebox{0.32\textwidth}{!}{\includegraphics{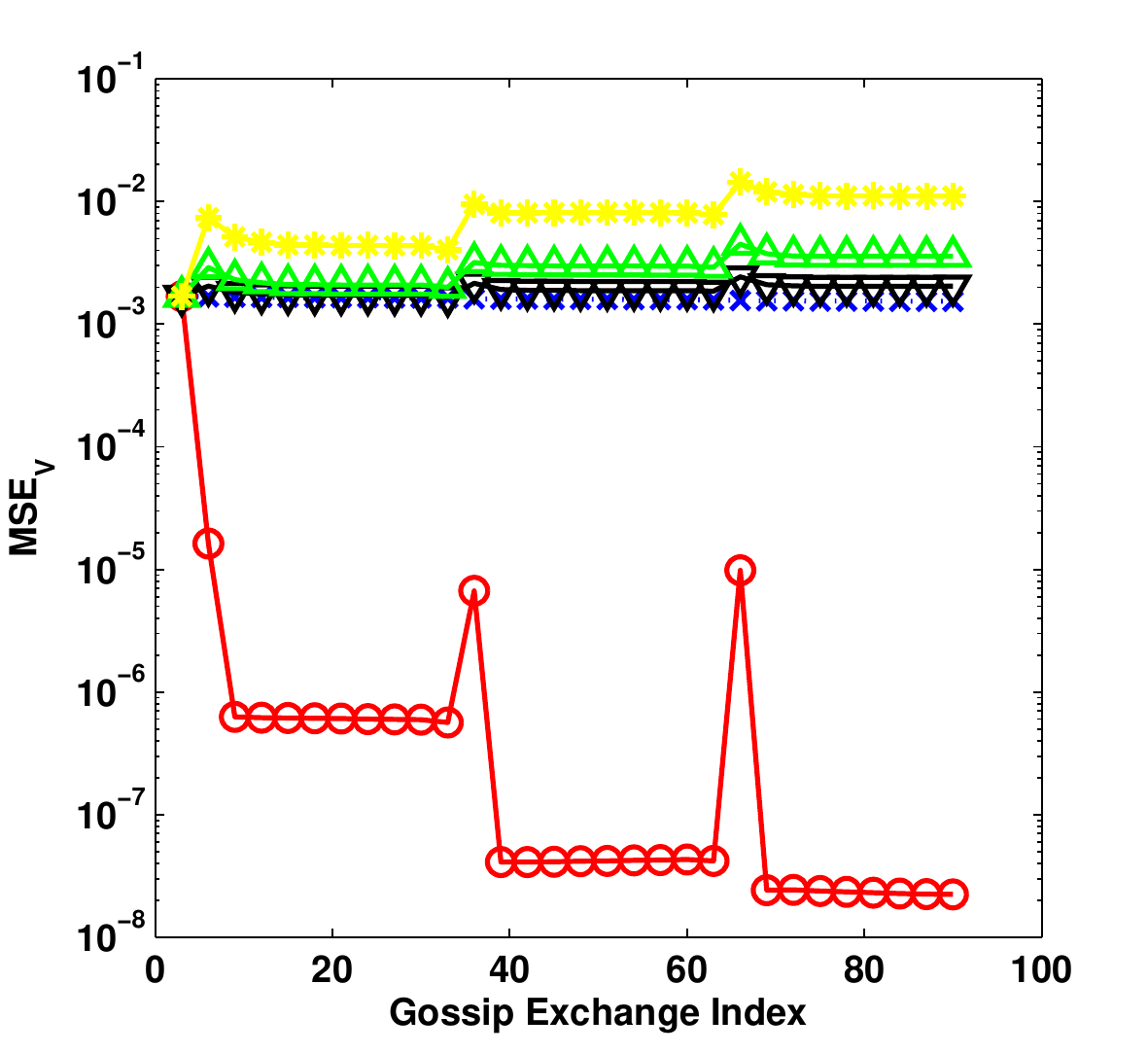}\label{fig.diff_Vm_dynamic}}}}
\vspace{-0.2cm}
{\subfigure[][$\mathrm{MSE}_\Theta$ Comparison]{\resizebox{0.32\textwidth}{!}{\includegraphics{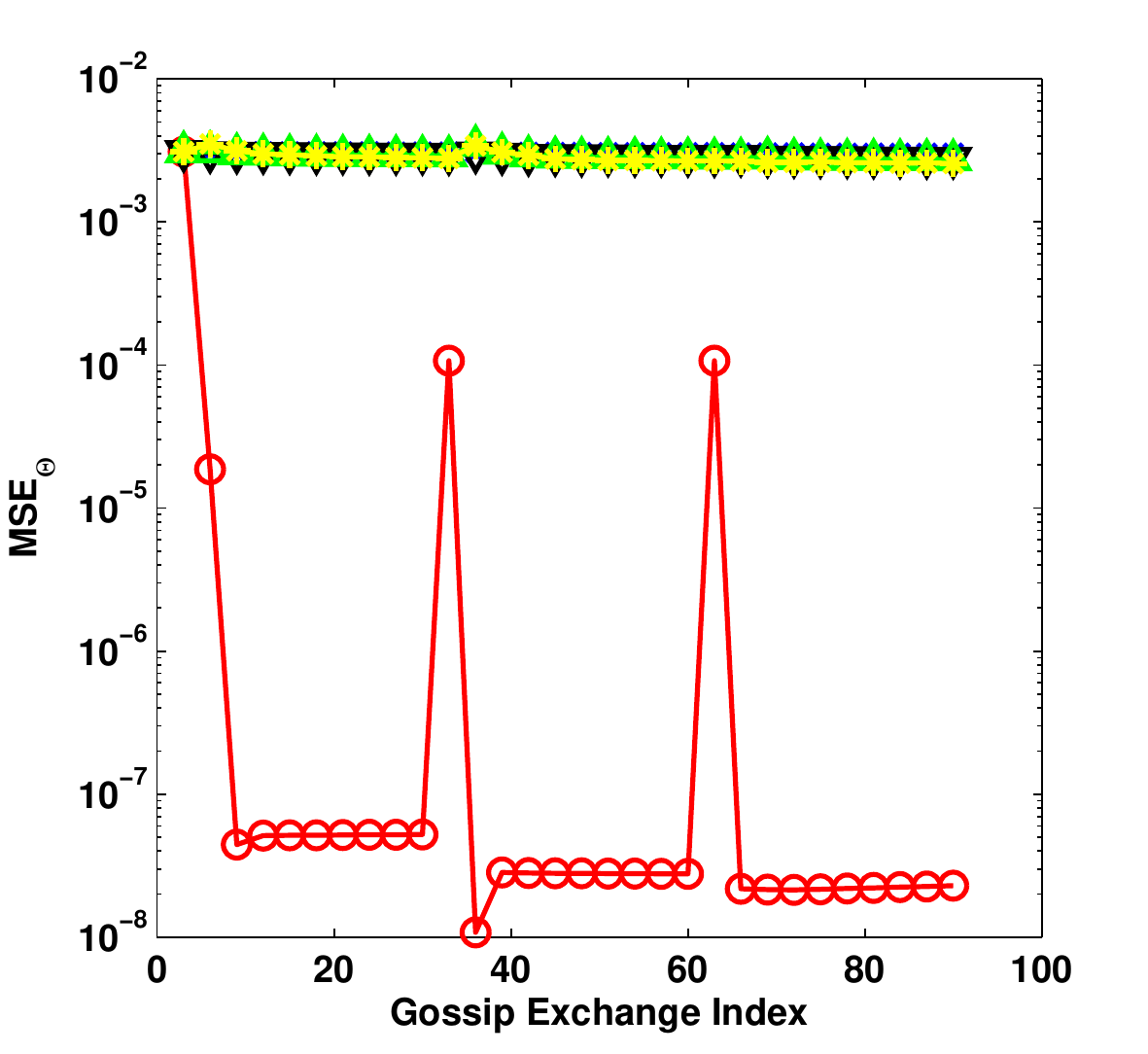}\label{fig.diff_Va_dynamic}}}}
\vspace{0.2cm}
\end{center}
\vspace{-0.2cm}
\caption{Comparison between GGN Algorithm (CSE Protocol) and \cite{kar2008distributed} against $\ell$ with $\ell_{\min}=3$ exchanges for every update.}
\end{figure*}

\begin{figure*}[!t]
\begin{center}
{\subfigure[][Objective value]{\resizebox{0.32\textwidth}{!}{\includegraphics{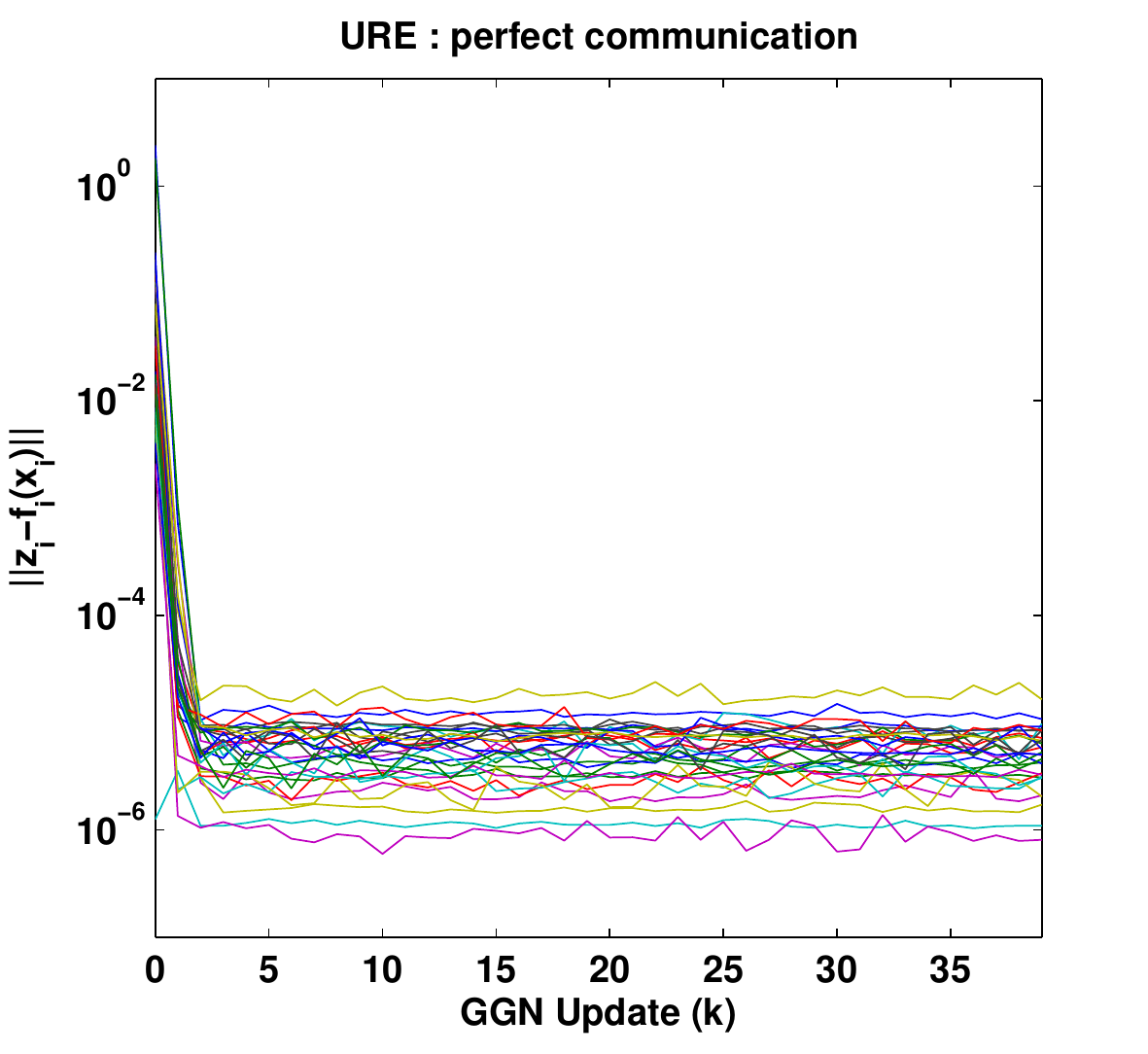}}}}
{\subfigure[][$\mathrm{MSE}_V$ Comparison]{\resizebox{0.32\textwidth}{!}{\includegraphics{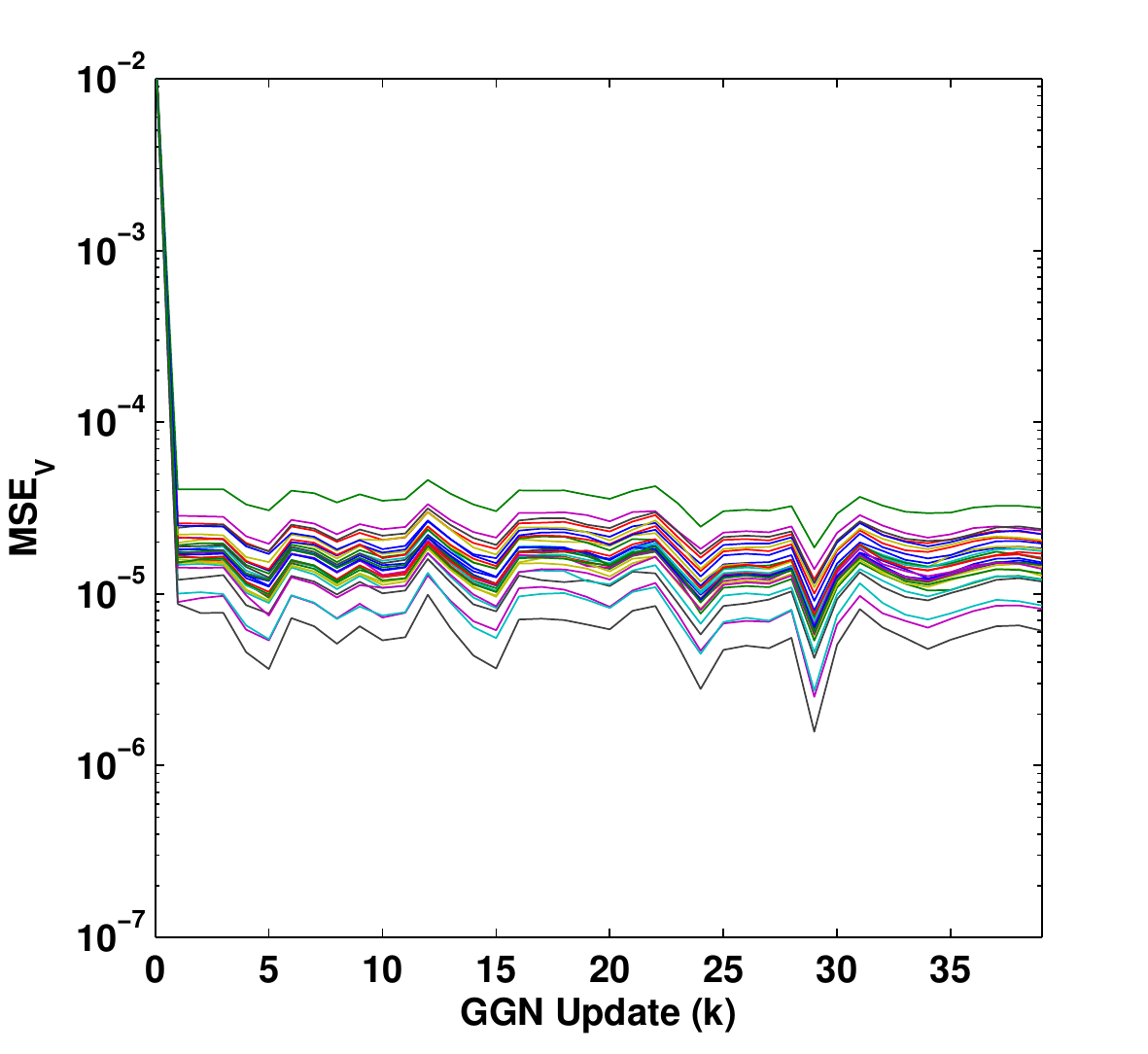}}}}
{\subfigure[][$\mathrm{MSE}_\Theta$ Comparison]{\resizebox{0.32\textwidth}{!}{\includegraphics{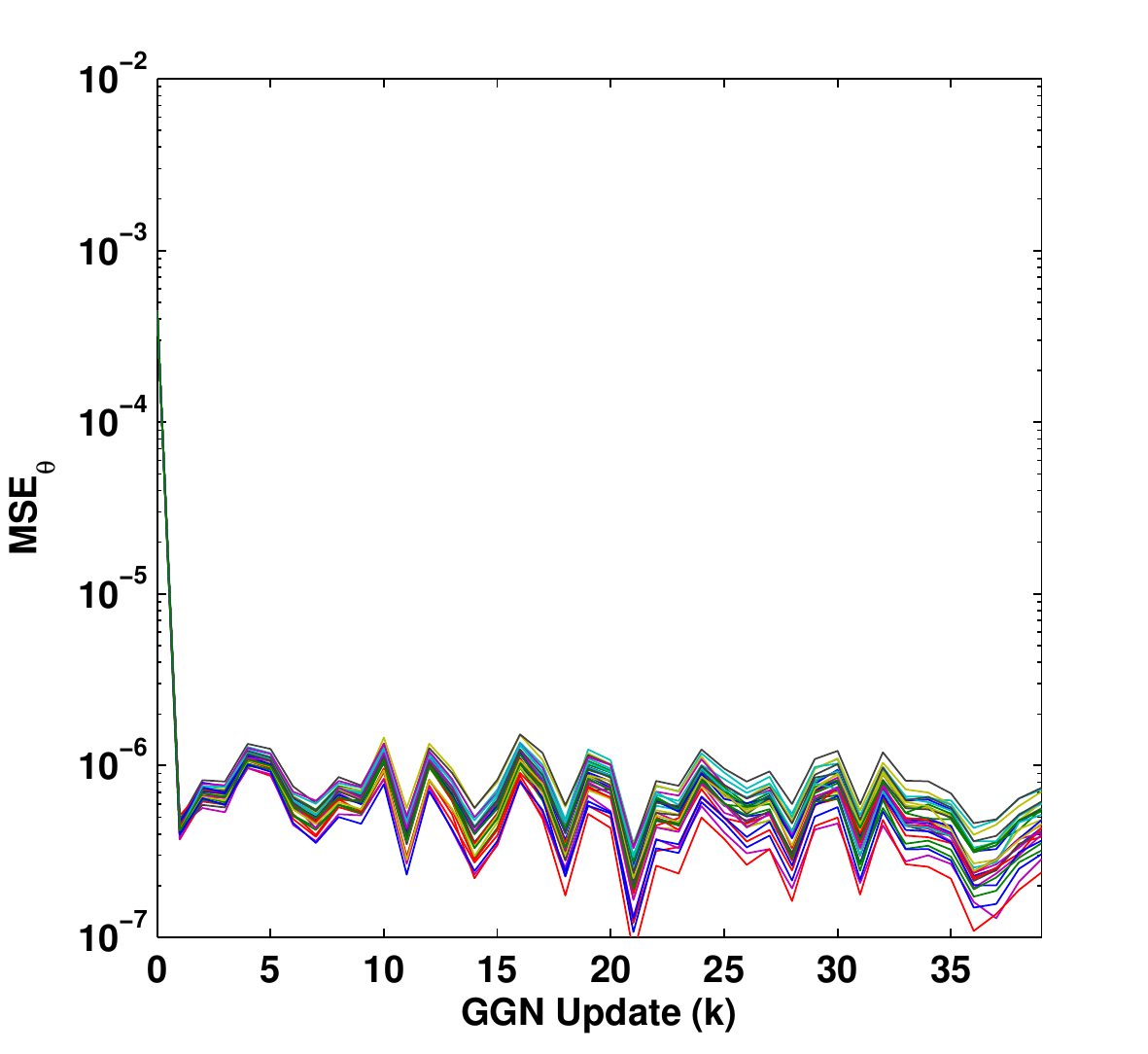}}}}\\
\vspace{-0.3cm}
{\subfigure[][Objective value]{\resizebox{0.32\textwidth}{!}{\includegraphics{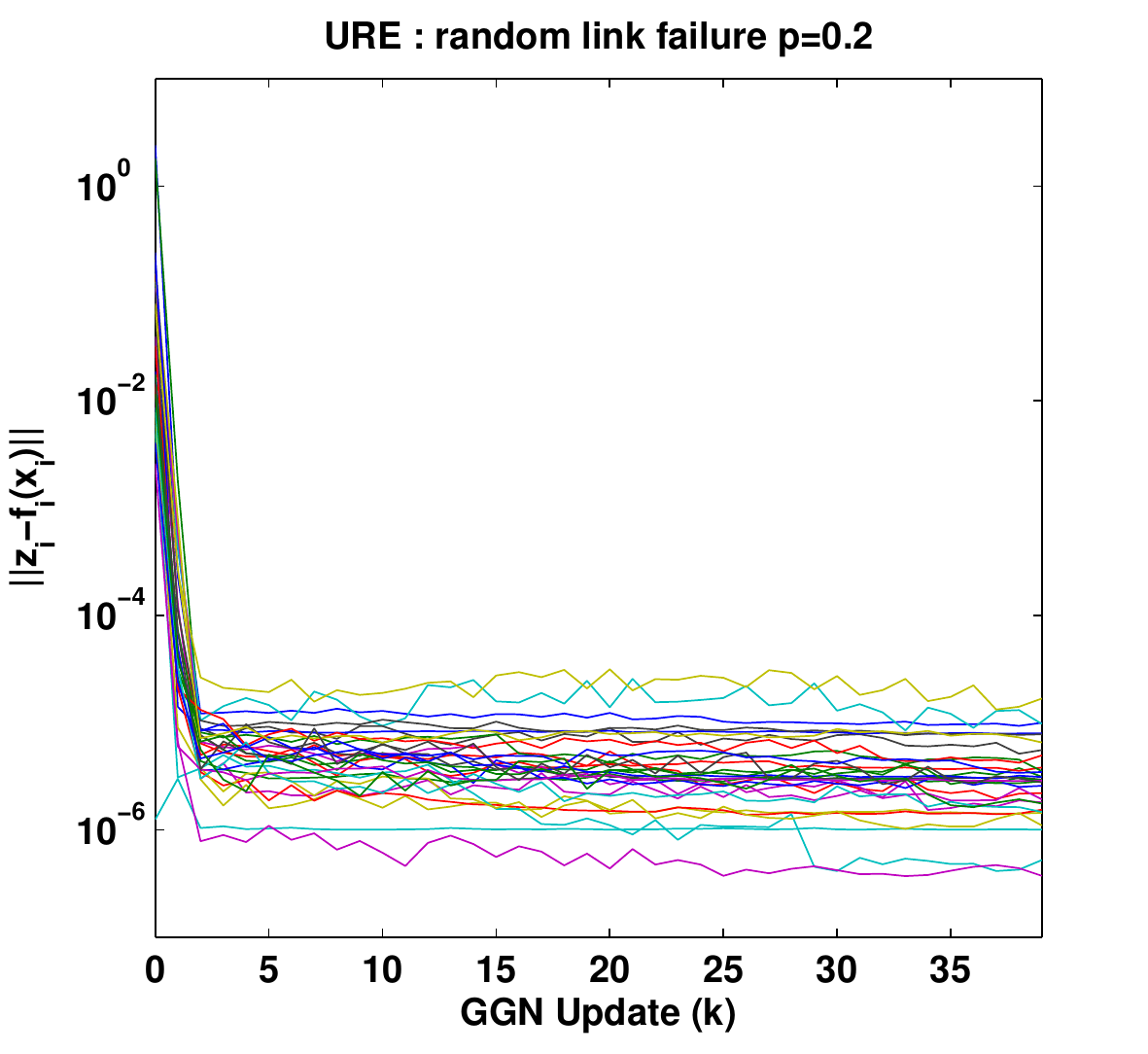}}}}
{\subfigure[][$\mathrm{MSE}_V$ Comparison]{\resizebox{0.32\textwidth}{!}{\includegraphics{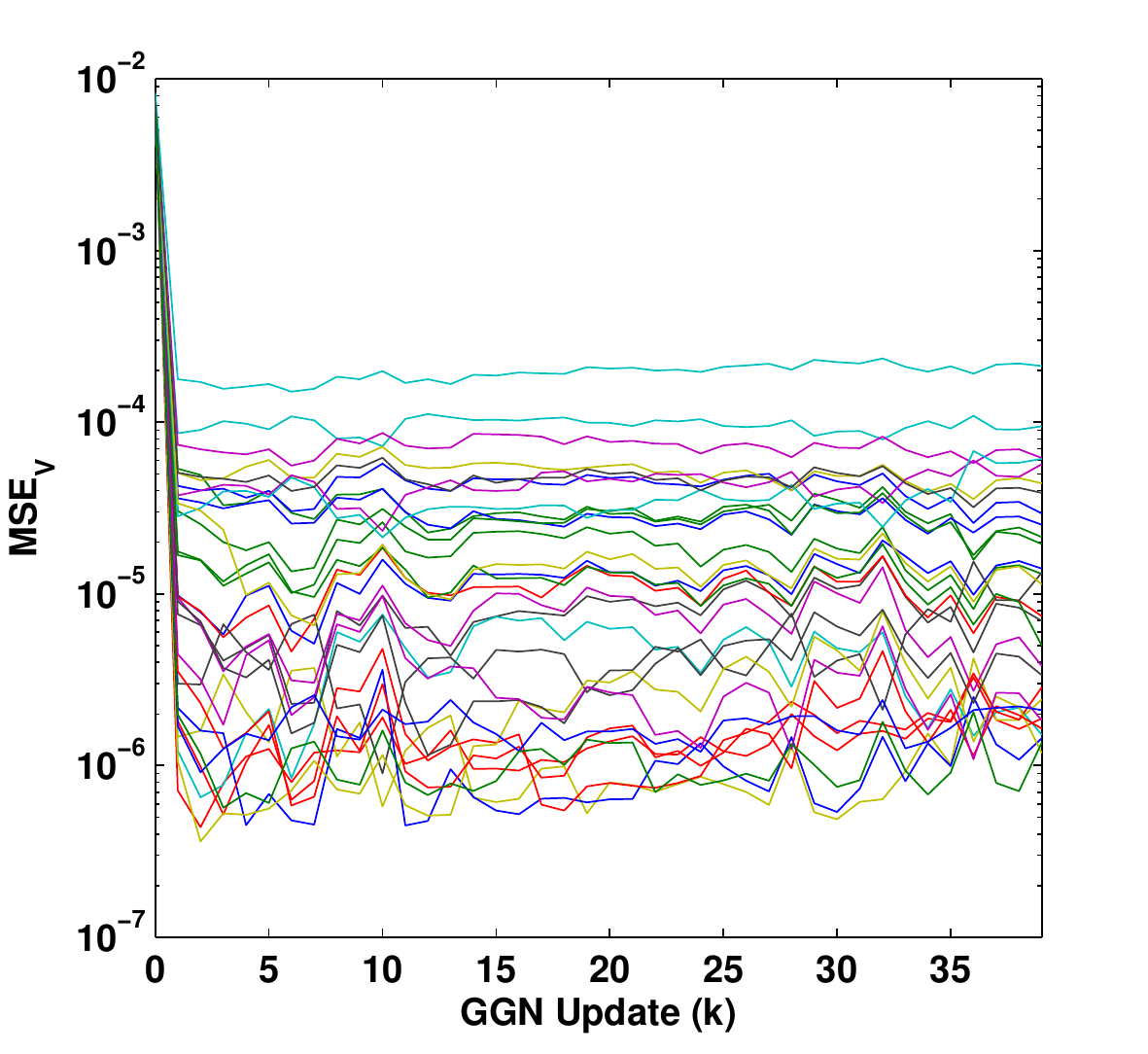}}}}
{\subfigure[][$\mathrm{MSE}_\Theta$ Comparison]{\resizebox{0.32\textwidth}{!}{\includegraphics{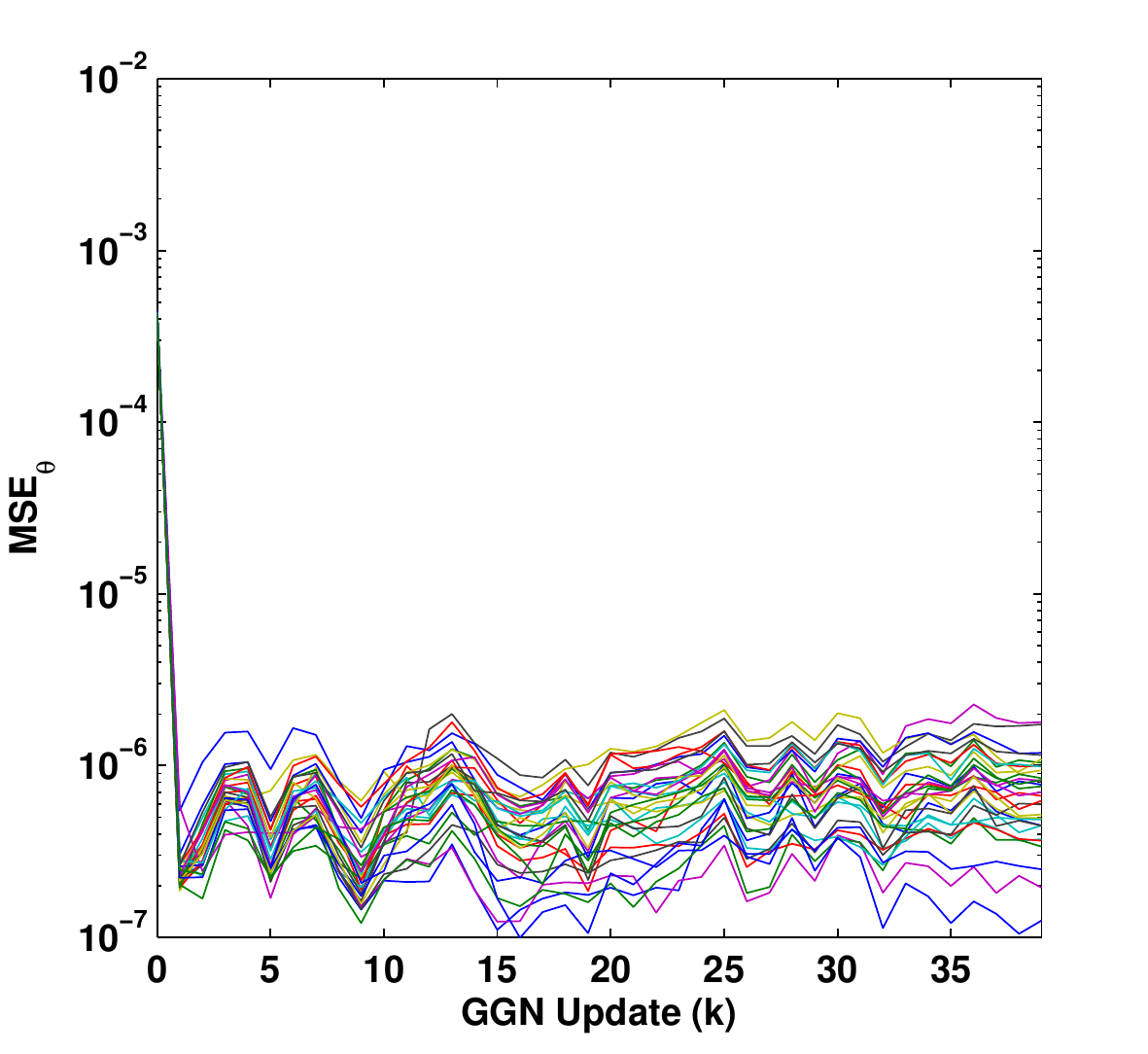}}}}
\vspace{0.2cm}
\end{center}
\vspace{-0.5cm}
\caption{MSE performance of GGN (URE Protocol) in IEEE-30 bus system with $I=N=30$ agents and $\mathcal{O}(N)$ pair-wise gossip exchanges. (top) : perfect communication (bottom) : $p=0.3$ random link failures (each line corresponds to one agent). }
\label{fig.GGN_jerf_IEEE30}
\end{figure*}

Clearly, the GGN algorithm converges much faster since it reaches the steady state error after $k =10$ updates (i.e., $k\ell_{\min} = 30$ exchanges). It is observed in Fig. \ref{fig.diff_obj1} to \ref{fig.diff_grad} that although the gossip exchange per update $\ell_{\min}=3$ is small and does not satisfy Assumption \ref{ell_ass}, both $\mathrm{Val}_k$ and the $\mathrm{Grad}_k$ still decrease exponentially as the iterations progress. On the other hand, the objective value and the gradient norm of the diffusion algorithm in \cite{xie2012fully} decrease slowly. Furthermore, the update of diffusion algorithms exhibit more fluctuations especially in the beginning, while the GGN algorithm conditions the gradient by the GN Hessian and therefore the update tends to be smooth and continues to lie in the proximity of the desired solution with high accuracy. Furthermore, the performances of the diffusion algorithm are sensitive to the step-size $\alpha_{\textrm{diff},\ell}$, since $\alpha_{\textrm{diff},\ell}=0.01\ell^{-1}$ is better initially due to less fluctuations as a result of the small step-size, while $0.3\ell^{-1}$ gradually outperforms $0.01\ell^{-1}$ due to the progress made by the larger step-size. However, when the step-size continues to increase, the performance starts to deteriorate from $\alpha_{\textrm{diff},\ell}=0.5\ell^{-1}$ to $\ell^{-1}$, and even diverges beyond a certain value.

\subsubsection{Estimation via Adaptive Processing}
Here we show numerically the applicability of the GGN algorithm to adaptive processing as described in \eqref{adaptive} against the method proposed in \cite{kar2008distributed} with the same network setting and step-sizes. Furthermore, we compare the global MSE performance of the GGN algorithm against the diffusion algorithm, given by
\begin{align}
	\mathrm{MSE}_V^{(k)} &= \frac{1}{I}\sum_{i=1}^I  \mathrm{MSE}_{V, i}^{(k)},~
	\mathrm{MSE}_\Theta^{(k)} = \frac{1}{I}\sum_{i=1}^I  \mathrm{MSE}_{\Theta,i}^{(k)}.
\end{align}

We generate $3$ snapshots of measurements $\{\mathbf{z}_i[t]\}_{i=1}^I$ for $t=1,\cdots,3$ based on the same state $\bar{\mathbf{x}}[t]=\bar{\mathbf{x}}$ by adding independent Gaussian noise with variance $\sigma^2=10^{-6}$, similar to the adaptive setting considered in \cite{kar2008distributed}. More specifically, we use $\ell_{\min}=3$ gossip exchanges between every two algorithm updates until $k=10$, thus leading to a total number of $30$ exchanges per snapshot. It can be seen from Fig. \ref{fig.diff_obj_dynamic} to \ref{fig.diff_Va_dynamic} that the proposed GGN algorithm tracks the state accurately when new measurements stream in, where the spikes observed in the plots are caused by the new measurements. Since the number of gossip exchanges is limited, the diffusion algorithm in \cite{xie2012fully} and \cite{kar2008distributed} convergence slowly and fail to track the state.

\subsection{MSE Performance under URE Protocol with Link Failures}
In this section, we examine the MSE performance of the GGN algorithm under the URE protocol with a fixed number of algorithm updates $K=40$. The performance is evaluated with a demanding setting, where we divide the $N$-bus system into $N$ sites and each site only communicates with one of its neighbors $10$ times on average. The network-wide communication volume in this scenario is on the order of the network diameter $\mathcal{O}(N)$, which implies the number of transmissions in the centralized scheme as if the local measurements are relayed and routed through the entire network. For simplicity, we simulate that at each exchange, the $i$-th distributed agents wakes up with uniform probability $1/I$ and picks a neighbor with equal probability $1/I$.

In order to show the robustness of the proposed algorithm, we examine the performance of the GGN algorithm for cases with random link failures, where any established link $\{i,j\}\in\mathcal{M}$ fails with probability $p=0.3$ independently. It is clear that this communication model with link failures may not satisfy Assumption \ref{connectivity_frequency}, but it is shown below that our approach is robust to the random setting and degrades gracefully with the probability of failures. In Fig. \ref{fig.GGN_jerf_IEEE30}, we track both the individual objective $\mathrm{Val}_i^{(k)}=\|\mathbf{z}_i-\mathbf{f}_i(\mathbf{x}_i^k)\|^2$ as well as the individual $\mathrm{MSE}_{V,i}^{(k)}$ and $\mathrm{MSE}_{\Theta,i}^{(k)}$ defined in \eqref{MSE_V} and \eqref{MSE_Theta}. It can be observed from the figure that the MSE curves of state estimates of different sites are highly consistent and they all converge asymptotically when there is no link failures. Similar behaviors can be observed for the case with random link failures, where the local estimate at each site is not in perfect consistence with the others, but the accuracy remains satisfactory compared to the perfect case and degrades gracefully with the probability of link failures. 

\section{Conclusions}
In this paper, we study the convergence and performance of the GGN algorithm and discuss its application in power system state estimation. The numerical results suggest that  that the proposed algorithm leads to accurate state estimates across the distributed areas,  is robust to link/node failures, with polinomial communication and computation cost.

\section{Acknowledgements}
We wish to thank the Associate Editor the anonymous Reviewers for their comments. Their suggestions helped
significantly in improving this article.

\appendices
\section{Proof of Lemma \ref{lem_error_recursion}}\label{proof_lem_error_recursion}
	To study the convergence of the GGN algorithm, we examine the update in \eqref{local_estimate} and re-write it with respect to the exact descent $\mathbf{d}_i^k$ in \eqref{exact_local_descent}
\begin{align}
	\mathbf{x}_i^{k+1} = P_{\mathbb{X}}\left[\mathbf{x}_i^k -\alpha\mathbf{d}_i^k + \alpha\left(\mathbf{d}_i^k-\mathbf{d}_i^k(\ell_k)\right)\right].
\end{align}
By subtracting the fixed point $\mathbf{x}^\star$ and using the non-expansive property of the operator $P_{\mathbb{X}}(\cdot)$ on the closed convex set $\mathbb{X}$, we have the following recursion
\begin{align*}
	\left\|\mathbf{x}_i^{k+1}-\mathbf{x}^\star\right\| \leq \left\|\mathbf{x}_i^k-\mathbf{x}^\star -\alpha \mathbf{d}_i^k\right\| +\alpha \left\|\mathbf{d}_i^k(\ell_k)-\mathbf{d}_i^k\right\|.
\end{align*}
For convenience, we denote $\mathbf{G}^\dagger(\cdot)$ as the pseudo-inverse of $ \mathbf{G}(\cdot)$. For any fixed point $\mathbf{x}^\star\in\mathbb{X}$ in \eqref{fixed_joint} such that $\mathbf{G}^\dagger (\mathbf{x}^\star)\mathbf{g}(\mathbf{x}^\star)=\mathbf{0}$, the first term can be equivalently written by substituting \eqref{decentralized_descent} as follows
\begin{align}\label{orig_update}
	\mathbf{x}_i^k-\mathbf{x}^\star - \alpha\mathbf{d}_i^k
	&=\mathbf{x}_i^k-\mathbf{x}^\star \\
	&~~~ - \alpha\mathbf{G}^\dagger (\mathbf{x}_i^k)\mathbf{g}(\mathbf{x}_i^k)
	 + \alpha\mathbf{G}^\dagger (\mathbf{x}^\star)\mathbf{g}(\mathbf{x}^\star).\nonumber
\end{align}
Using \eqref{central_descent} together with the invertibility condition of $\mathbf{G}(\mathbf{x})$ over $\mathbf{x}\in\mathbb{X}$ in Assumption \ref{lipshitz}, we have
\begin{align}\label{dummy_x}
    \mathbf{x}_i^k-\mathbf{x}^\star = \mathbf{G}^\dagger (\mathbf{x}_i^k)\mathbf{G} (\mathbf{x}_i^k)\left(\mathbf{x}_i^k-\mathbf{x}^\star\right).
\end{align}
Then by substituting \eqref{dummy_x} into \eqref{orig_update}, and meanwhile adding and subtracting simultaneously a term $\alpha\mathbf{G}^\dagger (\mathbf{x}_i^k)\mathbf{g}(\mathbf{x}^\star)$, we have the following expression
\begin{align}\label{recursion_term1}
	&\mathbf{x}_i^k-\mathbf{x}^\star - \alpha\mathbf{d}_i^k\\
	&=
	\mathbf{G}^\dagger (\mathbf{x}_i^k)\left[\mathbf{G}(\mathbf{x}_i^k)\left(\mathbf{x}_i^k-\mathbf{x}^\star\right) - \alpha\mathbf{g}(\mathbf{x}_i^k) + \alpha\mathbf{g}(\mathbf{x}^\star)\right]\\
	&~~~ + \alpha\left[ \mathbf{G}^\dagger (\mathbf{x}^\star) - \mathbf{G}^\dagger (\mathbf{x}_i^k)\right]\mathbf{g}(\mathbf{x}^\star).
\end{align}
The expression in the first term can be re-written with the mean-value theorem as follows
\begin{align}
	&\alpha\mathbf{g}(\mathbf{x}^\star)-\alpha\mathbf{g}(\mathbf{x}) - \mathbf{G}(\mathbf{x})(\mathbf{x}^\star-\mathbf{x})\\
	&=\alpha\left[\int_{0}^1\mathbf{G}(\mathbf{x} + t(\mathbf{x}^\star-\mathbf{x}))(\mathbf{x}^\star-\mathbf{x}) \mathrm{d}t\right] - \mathbf{G}(\mathbf{x})(\mathbf{x}^\star-\mathbf{x})\nonumber\\
	&=\alpha\left(\int_{0}^1\left[\mathbf{G}(\mathbf{x} + t(\mathbf{x}^\star-\mathbf{x}))- \mathbf{G}(\mathbf{x})\right](\mathbf{x}^\star-\mathbf{x}) \mathrm{d}t\right) \nonumber\\
	&~~~ - (1-\alpha) \mathbf{G}(\mathbf{x})(\mathbf{x}^\star-\mathbf{x}),\nonumber
\end{align}
whose norm can be bounded by using Assumption \ref{lipshitz} as
\begin{align}
	&\left\|\alpha\mathbf{g}(\mathbf{x}^\star)-\alpha\mathbf{g}(\mathbf{x}) - \mathbf{G}(\mathbf{x})(\mathbf{x}^\star-\mathbf{x})\right\|\\
	&\leq
	\alpha\left[\int_{0}^1\left\|\mathbf{G}(\mathbf{x} + t(\mathbf{x}^\star-\mathbf{x}))- \mathbf{G}(\mathbf{x})\right\|\mathrm{d}t\right]\left\|\mathbf{x}-\mathbf{x}^\star\right\|\nonumber\\
	&~~~ + (1-\alpha)\sigma_{\max}\left\|\mathbf{x}-\mathbf{x}^\star\right\|.\nonumber
\end{align}
From the Lipschitz condition in Assumption \ref{lipshitz}, we have
\begin{align*}
	\int_{0}^1\left\|\mathbf{G}(\mathbf{x} + t(\mathbf{x}^\star-\mathbf{x}))- \mathbf{G}(\mathbf{x})\right\|\mathrm{d}t
	\leq \omega \left\|\mathbf{x}-\mathbf{x}^\star\right\| \int_{0}^1 t\mathrm{d}t.
\end{align*}
Thus, if condition (3) of Assumption \ref{lipshitz} holds, we have
\begin{align}\label{term1}
	&\left\|\alpha\mathbf{g}(\mathbf{x}^\star)-\alpha\mathbf{g}(\mathbf{x}) - \mathbf{G}(\mathbf{x})(\mathbf{x}^\star-\mathbf{x})\right\|\\
	&\leq \frac{\alpha \omega}{2}\left\|\mathbf{x}-\mathbf{x}^\star\right\|^2+ (1-\alpha)\sigma_{\max}\left\|\mathbf{x}-\mathbf{x}^\star\right\|,\nonumber
\end{align}
and finally according to \cite[Lemma 1]{salzo2011convergence}, we have 
\begin{align}\label{term2}
	\|\mathbf{G}^{\dagger} (\mathbf{x})-\mathbf{G}^{\dagger} (\mathbf{x}^\star)\|
	& \leq \sqrt{2}\|\mathbf{G}^{\dagger} (\mathbf{x})\|\|\mathbf{G}^{\dagger} (\mathbf{x}^\star)\|\|\mathbf{G}(\mathbf{x})-\mathbf{G}(\mathbf{x}^\star)\|\nonumber\\
	&\leq \frac{\sqrt{2}\omega}{\sigma_{\min}^2}\|\mathbf{x}-\mathbf{x}^\star\|.
\end{align}
By definition we have $\|\mathbf{G}^\dagger(\boldsymbol{x})\|^2 =\|\left(\mathbf{G}^T(\boldsymbol{x})\mathbf{G}(\boldsymbol{x})\right)^{-1}\|$. Also Assumption \ref{lipshitz} implies $\|\mathbf{G}^\dagger(\boldsymbol{x})\|\leq 1/\sigma_{\min}$.

For convenience, we let $\epsilon_{\min}\triangleq\left\|\mathbf{g}(\mathbf{x}^\star)\right\|$ be the goodness of fit at $\mathbf{x}^\star$ and define the following constants
\begin{align}
    T_1 &\triangleq \frac{\alpha\omega}{2\sigma_{\min}},\quad
    T_2 \triangleq (1-\alpha)\frac{\sigma_{\max}}{\sigma_{\min}}+\frac{\sqrt{2}\alpha\omega\epsilon_{\min}}{\sigma_{\min}^2}.
\end{align}
Then, substituting $\|\mathbf{G}^\dagger (\mathbf{x})\|\leq 1/\sigma_{\min}$ and \eqref{term1}\eqref{term2} back to \eqref{recursion_term1} and using \eqref{T_constants}, we have
\begin{align*}
	\left\|\mathbf{x}_i^k-\mathbf{x}^\star - \alpha\mathbf{d}_i^k\right\|
	&\leq
	T_1
    \left\|\mathbf{x}_i^k-\mathbf{x}^\star\right\|^2
	+
    T_2\left\|\mathbf{x}_i^k-\mathbf{x}^\star\right\|,
\end{align*}
Therefore, we have the error recursion \eqref{distributed_error_recursion}.

\section{Proof of Theorem \ref{theorem_convergence}}\label{proof_theorem_convergence}
If the discrepancy error is upper bounded by a constant $\kappa\geq 0$ such that $\|\mathbf{d}_i^k(\ell_k)-\mathbf{d}_i^k\| \leq \kappa$, then from Lemma \ref{lem_error_recursion}, the recursion can be simplified as
\begin{align}
	\left\|\mathbf{x}_i^{k+1}-\mathbf{x}^\star\right\|
	&\leq
	T_1 \left\|\mathbf{x}_i^k-\mathbf{x}^\star\right\|^2
	+T_2\left\|\mathbf{x}_i^k-\mathbf{x}^\star\right\|
	 + \alpha\kappa.
\end{align}
where $T_1$ and $T_2$ are given in \eqref{T_constants}. Let $\zeta_{i,k}=\left\|\mathbf{x}_i^k-\mathbf{x}^\star\right\|$, then the error recursion can be expressed as a dynamical system as
\begin{align}
	\zeta_{i,k+1} \leq T_1 \zeta_{i,k}^2 + T_2 \zeta_{i,k} + \alpha\kappa, \quad \zeta_{i,k}>0.
\end{align}
Since $\zeta_{i,k}$ is non-negative, this error dynamic can be upper bounded by the dynamical system of $\rho_{k+1}=\psi(\rho_k)$ with
\begin{align}
	\psi(\rho_k)= T_1 \rho_k^2 + T_2 \rho_k + \alpha\kappa, \quad \rho_k>0,
\end{align}
whose equilibrium points are obtained by solving
\begin{align}\label{contraction}
	\rho  = T_1 \rho^2 + T_2 \rho +\alpha \kappa.
\end{align}
When $\kappa$ satisfies
\begin{align}\label{Delta_eq}
    (1-T_2)^2 - 4\alpha T_1\kappa\geq 0,
\end{align}
the equilibrium points of \eqref{contraction} exist and are obtained as  \eqref{rho_def}.

Now let $\dot{\psi}(\rho)\triangleq {\mathrm{d}\psi(\rho)}/{\mathrm{d}\rho}$ be the first order derivative of the dynamics. According to \cite[cf. Proposition 1.9]{galor2007discrete}, an equilibrium point is a stable sink if $|\dot{\psi}(\cdot)|<1$ and unstable otherwise. Thus, the equilibrium point $\rho_{\max}$ is unstable since the following is always true
\begin{align}
    \left|\dot{\psi}(\rho_{\max})\right| &= |2T_1 \rho_{\max} +T_2|\\
    &=\left|1+\sqrt{(1-T_2)^2-4\alpha T_1\kappa}\right|>1,
\end{align}
while the point $\rho_{\min}$ is a sink if
\begin{align}
    \left|\dot{\psi}(\rho_{\min})\right|
    &=\left|1-\sqrt{(1-T_2)^2-4\alpha T_1\kappa}\right|<1.
\end{align}
To guarantee $\left|\dot{\psi}(\rho_{\min})\right|<1$, it requires
\begin{align}
    0 < (1-T_2)^2 - 4\alpha T_1\kappa < 4,
\end{align}
which together with \eqref{Delta_eq} leads to the following condition on the bounded perturbation $\kappa$
\begin{align}
    \frac{T_2^2-2T_2-3}{4\alpha T_1}< \kappa <\frac{T_2^2-2T_2+1}{4\alpha T_1}.
\end{align}
Clearly, given an arbitrary $\alpha\in(0,1]$, the lower bound on $\kappa$ is unrealistic if $T_2^2-2T_2-3>0$ since the lower bound could approach infinity as $\alpha\rightarrow 0$. Therefore, to ensure convergence with an arbitrarily small perturbation, it is sufficient to have
\begin{align}
    T_2^2-2T_2-3<0 \Longrightarrow -1<T_2<3.
\end{align}
Since $T_2\geq 0$ by definition \eqref{T_constants}, the condition becomes
\begin{align}
    0\leq(1-\alpha)\frac{\sigma_{\max}}{\sigma_{\min}}+\frac{\sqrt{2}\alpha\omega\epsilon_{\min}}{\sigma_{\min}^2}<3.
\end{align}
By re-arranging the terms, this condition is equivalent to
\begin{align}
    &
    \begin{cases}
       \displaystyle \frac{\sqrt{2}\alpha\omega\epsilon_{\min}}{\sigma_{\min}^2}
       <3-(1-\alpha)\frac{\sigma_{\max}}{\sigma_{\min}}\\
       \displaystyle 3-(1-\alpha)\frac{\sigma_{\max}}{\sigma_{\min}}>0
    \end{cases},
\end{align}
which can be simplified as
\begin{align}
    \begin{cases}
        \displaystyle \omega\epsilon_{\min} < \frac{\sigma_{\min}^2}{\sqrt{2}\alpha}\left[ 3-(1-\alpha)\frac{\sigma_{\max}}{\sigma_{\min}}\right]\\
        \displaystyle \max\left\{1-\frac{3\sigma_{\min}}{\sigma_{\max}},0\right\}<\alpha\leq 1
    \end{cases}.
\end{align}
Thus, if the initial error $\zeta_{i,0}>\rho_{\max}$, the error keeps growing. On the other hand, if the errors are bounded by $0<\zeta_{i,k}<\rho_{\max}$ for all $i$'s and $k$'s, the algorithm reaches the equilibrium error floor $\rho_{\min}$. Thus, as long as the initialization error $\zeta_{i,0}$ satisfies $0< \zeta_{i,0} < \rho_{\max}$ for $i=1,\cdots,I$, the algorithm progresses with contracting error until reaching the error floor $\rho_{\min}$ due to the constant bounded perturbation $\kappa$. 

As a result, as long as the initial condition $\mathbf{x}_i^0$ satisfies $\left\|\mathbf{x}_i^0-\mathbf{x}^\star\right\|<\rho_{\max}$ with respect to a certain fixed point $\mathbf{x}^\star$, the error norm is upper bounded by
\begin{align*}
	\underset{k\rightarrow\infty}{\limsup} \left\|\mathbf{x}_i^k-\mathbf{x}^\star\right\| \leq \rho_{\min}.
\end{align*}
Instead, if $\left\|\mathbf{x}_i^0-\mathbf{x}^\star\right\|>\rho_{\max}$, the error grows without bound.

\section{Proof of Lemma \ref{lem_gossip_error}}\label{proof_lem_gossip_error}
Using \eqref{gossip_exchange_matrix}, we evaluate the deviation of $\bdsb{\mathcal{H}}_k(\ell)$ from the average $\bar{\bdsb{\mathcal{H}}}_k = \left[\mathbf{1}^T\otimes\mathbf{I}_{N_{\mathcal{H}}}\right]\bdsb{\mathcal{H}}_k(0)/I$ for a finite $\ell$. By subtracting the average $\bar{\bdsb{\mathcal{H}}}_k$ on both sides of \eqref{gossip_exchange_matrix}, we have
\begin{align*}
	&\bdsb{\mathcal{H}}_k(\ell) - \bar{\bdsb{\mathcal{H}}}_k\\
	&=
	 \left[\mathbf{W}_k(\ell)\otimes \mathbf{I}_{N_{\mathcal{H}}}\right] \bdsb{\mathcal{H}}_k(\ell-1) - \frac{\mathbf{1}\mathbf{1}^T\otimes\mathbf{I}_{N_{\mathcal{H}}}}{I} \bdsb{\mathcal{H}}_k(0)\\
	&=
	\left[\prod_{\ell'=0}^{\ell}\mathbf{W}_k(\ell')\otimes \mathbf{I}_{N_{\mathcal{H}}}\right] \bdsb{\mathcal{H}}_k(0) - \frac{\mathbf{1}\mathbf{1}^T\otimes\mathbf{I}_{N_{\mathcal{H}}}}{I} \bdsb{\mathcal{H}}_k(0)\\
	&=
	\left[\left(\prod_{\ell'=0}^{\ell}\mathbf{W}_k(\ell') - \frac{\mathbf{1}\mathbf{1}^T}{I}\right) \otimes \mathbf{I}_{N_{\mathcal{H}}}\right]  \bdsb{\mathcal{H}}_k(0).
\end{align*}
Then, we bound the norms of the above equation as
\begin{align}
	\left\|\bdsb{\mathcal{H}}_k(\ell) - \bar{\bdsb{\mathcal{H}}}_k\right\|
	&\leq \left\|\prod_{\ell'=0}^{\ell}\mathbf{W}_k(\ell') - \frac{\mathbf{1}\mathbf{1}^T}{I}\right\|\left\|\bdsb{\mathcal{H}}_k(0)\right\|.
\end{align}
Using Lemma \ref{weight_matrix_jroperty} and the norm inequality $\left\|\cdot\right\|\leq \left\|\cdot\right\|_F$, we have
\begin{align*}
	\left\|\bdsb{\mathcal{H}}_k(\ell) - \bar{\bdsb{\mathcal{H}}}_k\right\|
	&\leq \left\|\prod_{\ell'=0}^{\ell}\mathbf{W}_k(\ell') - \frac{\mathbf{1}\mathbf{1}^T}{I}\right\|_F\left\|\bdsb{\mathcal{H}}_k(0)\right\|\\
	&\leq \left[2I\left(\frac{1+\eta^{-L_0}}{1-\eta^{L_0}}\right)\lambda_\eta^\ell\right]\left\|\bdsb{\mathcal{H}}_k(0)\right\|.
\end{align*}
The quantity $\left\|\bdsb{\mathcal{H}}_k(0)\right\|$ is by definition \eqref{information_vec} determined as
\begin{align}
	\left\|\bdsb{\mathcal{H}}_k(0)\right\|^2
	&=
	\sum_{i=1}^I \left\| \mathbf{h}_{k,i}(0)\right\|^2 + \sum_{i=1}^I\left\| \mathbf{H}_{k,i}(0) \right\|_F^2\nonumber\\
	&=
	\sum_{i=1}^I \left(\left\| \mathbf{G}_i^T(\mathbf{x}_i^k)\mathbf{g}_i(\mathbf{x}_i^k) \right\|^2 +
	\left\| \mathbf{G}_i^T(\mathbf{x}_i^k)\mathbf{G}_i(\mathbf{x}_i^k) \right\|_F^2\right)\nonumber\\
	&\leq
	I\sigma_{\max}^2( \epsilon_{\max} ^2 + N\sigma_{\max}^2),
\end{align}
where the norm inequality is used
\begin{align*}
	\| \mathbf{G}_i^T(\mathbf{x})\mathbf{G}_i(\mathbf{x}) \|_F^2
	&\leq N\| \mathbf{G}^T(\mathbf{x})\mathbf{G}(\mathbf{x}) \|_2^2= N\sigma_{\max}^4.
\end{align*}
Letting $C=2I\sqrt{I\sigma_{\max}^2( \epsilon_{\max} ^2 + N\sigma_{\max}^2)}\left(\frac{1+\eta^{-L_0}}{1-\eta^{L_0}}\right)$, then the error is bounded as $\left\|\bdsb{\mathcal{H}}_k(\ell) -  \bar{\bdsb{\mathcal{H}}}_k\right\|
	\leq C\lambda_\eta^{\ell}$.

By definition of $\mathbf{e}_{k,i}(\ell)$ and $\mathbf{E}_{k,i}(\ell)$, we have
\begin{align}
	\bdsb{\mathcal{H}}_k(\ell) - \bar{\bdsb{\mathcal{H}}}_k
	=
	\begin{bmatrix}
		\mathbf{e}_{k,1}(\ell)\\
		\mathrm{vec}\left[\mathbf{E}_{k,1}(\ell)\right]\\
		\vdots\\
		\mathbf{e}_{k,I}(\ell)\\
		\mathrm{vec}\left[\mathbf{E}_{k,I}(\ell)\right]
	\end{bmatrix},
\end{align}
and hence the norm of each component is bounded by the total norm $\left\|\mathbf{e}_k(\ell)\right\| < C\lambda_\eta^{\ell}$ and $\left\|\mathbf{E}_k(\ell)\right\|_F < C\lambda_\eta^{\ell}$.

\section{Proof of Lemma \ref{lem_disagreement}}\label{proof_lem_disagreement}
We prove this result by mathematical induction. We will repetitively use matrix expansion \cite{horntopics} for any $\mathbf{Z}$ and $\delta\mathbf{Z}$,
\begin{align}\label{expansion}
	 (\mathbf{Z}+\delta\mathbf{Z})^{-1}=\sum_{q=0}^{\infty}(-1)^q\left(\mathbf{Z}^{-1}\delta\mathbf{Z}\right)^q\mathbf{Z}^{-1}
\end{align}
as long as $\left\|\mathbf{Z}^{-1}\delta\mathbf{Z}\right\|<1$

\subsection{Initial Case: $k=1$}
Given $\mathbf{x}_i^0=\mathbf{x}^0$ for all $i$, then for any $i\neq j$ we have
\begin{align}\label{upperbound_disagreement_0}
	\left\|\mathbf{x}_i^1-\mathbf{x}_j^1\right\| \leq
	\left\|\mathbf{d}_i^0(\ell_0)-\mathbf{d}_j^0(\ell_0)\right\|,
\end{align}
where the discrepancy is expressed explicitly as
\begin{align}
	\mathbf{d}_i^0(\ell_0)-\mathbf{d}_j^0(\ell_0)
	&=
	\left[\bar{\mathbf{H}}_0 +\mathbf{E}_{0,i}(\ell_0) \right]^{-1}\left[\bar{\mathbf{h}}_0 +  \mathbf{e}_{0,i}(\ell_0)\right]\nonumber\\
	&-\left[\bar{\mathbf{H}}_0 +\mathbf{E}_{0,j}(\ell_0) \right]^{-1}\left[\bar{\mathbf{h}}_0 +  \mathbf{e}_{0,j}(\ell_0)\right].
\end{align}
Thus, if $\mathbf{E}_{0,i}(\ell_0)$, $\mathbf{E}_{0,j}(\ell_0)$ are small enough, the expansion in \eqref{expansion} can be applied here to simplify the expression.

\subsubsection{Matrix series expansion}
Since $\mathbf{x}_i^0=\mathbf{x}^0$ for all $i$ such that $\bar{\mathbf{H}}_0=\mathbf{Q}(\mathbf{x}_i^0) $ and $\bar{\mathbf{h}}_0=\mathbf{q}(\mathbf{x}_i^0)$, they can be bounded based on Assumption \ref{lipshitz} as follows
\begin{align}\label{h_H_0_bound}
	\left\|\bar{\mathbf{h}}_0\right\| &= \left\|\mathbf{q}(\mathbf{x}_i^0)\right\|\\
                                    &=\frac{1}{I}\left\|\mathbf{G}^T(\mathbf{x}_i^0)\mathbf{g}(\mathbf{x}_i^0)\right\| \leq \frac{\sigma_{\max}  \epsilon_{\max} }{I},\\
	\left\|\bar{\mathbf{H}}_0^{-1}\right\| &=\left\|\mathbf{Q}^{-1}(\mathbf{x}_i^0)\right\| \\
                                            &= I\left\|\left(\mathbf{G}^T(\mathbf{x}_i^0)\mathbf{G}(\mathbf{x}_i^0)\right)^{-1} \right\| \leq \frac{I}{\sigma_{\min}^2}.
\end{align}
Note that from the norm inequality of sub-matrices
\begin{align*}
	\left\|\bar{\mathbf{H}}_0^{-1} \mathbf{E}_{0,i}(\ell_0) \right\| &\leq \left\|\bar{\mathbf{H}}_0^{-1}\right\|\left\| \mathbf{E}_{0,i}(\ell_0) \right\|\leq \left\|\bar{\mathbf{H}}_0^{-1}\right\|\left\|\mathbf{E}_0(\ell_0)\right\|_F\\
	\left\|\bar{\mathbf{H}}_0^{-1} \mathbf{E}_{0,j}(\ell_0) \right\| &\leq \left\|\bar{\mathbf{H}}_0^{-1}\right\|\left\| \mathbf{E}_{0,j}(\ell_0) \right\|\leq \left\|\bar{\mathbf{H}}_0^{-1}\right\|\left\|\mathbf{E}_0(\ell_0)\right\|_F,
\end{align*}	
and by Assumption \ref{ell_ass} we have $\ell_0\geq \ell_{\min}$. From Lemma \ref{lem_gossip_error} and Assumption \ref{ell_ass}, the above inequalities can be bounded as
\begin{align}\label{ratio2} \left\|\bar{\mathbf{H}}_0^{-1}\right\|\left\|\mathbf{E}_0(\ell_0)\right\|_F
	&\leq \frac{I}{\sigma_{\min}^2} C \lambda_\eta^{\ell_0}
    =\lambda_\eta^{(\ell_0-\ell_{\min})}\frac{IC}{\sigma_{\min}^2}\lambda_\eta^{\ell_{\min}}.
\end{align}
Choosing $\ell_{\min}$ according to \eqref{denominator}, we have $\lambda_\eta^{(\ell_0-\ell_{\min})}<1$ and
\begin{align*}
    \ell_{\min}> \log\left(\frac{\xi}{4D}\right)/\log\lambda_\eta
    ~~
    \Longrightarrow
    ~~
    \frac{IC}{\sigma_{\min}^2}\lambda_\eta^{\ell_{\min}}
    < \frac{IC}{4\sigma_{\min}^2 D}\xi.
\end{align*}
For notation convenience, we define
\begin{align}\label{tildexi}
    \tilde{\xi} = \frac{IC}{\sigma_{\min}^2 D}\xi,
\end{align}
and clearly, we have $0<\tilde{\xi}<\xi<1/2$ according to the definition of $D$ in \eqref{denominator} by Assumption \ref{ell_ass}. Therefore, letting $\delta\mathbf{Z}=\mathbf{E}_0(\ell_0)$ and $\mathbf{Z}=\bar{\mathbf{H}}_0$, it follows from Lemma \ref{lem_gossip_error} that
\begin{align}\label{ratio2}
    \left\|\mathbf{Z}^{-1}\delta\mathbf{Z}\right\|=\left\|\bar{\mathbf{H}}_0^{-1}\right\|\left\|\mathbf{E}_0(\ell_0)\right\|_F
	&\leq \frac{1}{4}\lambda_\eta^{(\ell_0-\ell_{\min})} \tilde{\xi} < \frac{1}{8}.
\end{align}
and the expansion holds. By the matrix series expansion and grouping all the high order terms
$q\geq 1$, we have
\begin{align}\label{gossip_error_0}
	&\mathbf{d}_i^0(\ell_0)-\mathbf{d}_j^0(\ell_0)\\
	&=
    \left[\bar{\mathbf{H}}_0^{-1}-\sum_{q=1}^\infty (-1)^q\left(\bar{\mathbf{H}}_0^{-1}\mathbf{E}_{0,i}(\ell_0)\right)^q\bar{\mathbf{H}}_0^{-1}\right]
	\left[\bar{\mathbf{h}}_0+\mathbf{e}_{0,i}(\ell_0)\right]\nonumber\\
    &~~ -\left[\bar{\mathbf{H}}_0^{-1}-\sum_{q=1}^\infty (-1)^q\left(\bar{\mathbf{H}}_0^{-1}\mathbf{E}_{0,j}(\ell_0)\right)^q\bar{\mathbf{H}}_0^{-1}\right]
	\left[\bar{\mathbf{h}}_0+\mathbf{e}_{0,j}(\ell_0)\right].\nonumber
\end{align}
To simplify the above expression, we write it in three terms $\mathbf{D}_1(\ell_0)$, $\mathbf{D}_2(\ell_0)$ and $\mathbf{D}_3(\ell_0)$ as follows
\begin{align*}
	\mathbf{d}_i^0(\ell_0)-\mathbf{d}_j^0(\ell_0)
	&= \mathbf{D}_1(\ell_0) + \mathbf{D}_2(\ell_0) + \mathbf{D}_3(\ell_0),
\end{align*}
where $\mathbf{D}_1(\ell_0)\triangleq \bar{\mathbf{H}}_0^{-1}\left[\mathbf{e}_{0,i}(\ell_0)-\mathbf{e}_{0,j}(\ell_0)\right]$ and
\begin{align*}
    \mathbf{D}_2(\ell_0) &\triangleq \sum_{q=1}^\infty(-1)^q\left(\bar{\mathbf{H}}_0^{-1}\mathbf{E}_{0,j}(\ell_0)\right)^q\bar{\mathbf{H}}_0^{-1}\bar{\mathbf{h}}_0\\
    &~~~~-\sum_{q=1}^\infty(-1)^q\left(\bar{\mathbf{H}}_0^{-1}\mathbf{E}_{0,i}(\ell_0)\right)^q\bar{\mathbf{H}}_0^{-1}\bar{\mathbf{h}}_0\\
    \mathbf{D}_3(\ell_0) &\triangleq \sum_{q=1}^\infty(-1)^q\left(\bar{\mathbf{H}}_0^{-1}\mathbf{E}_{0,j}(\ell_0)\right)^q\bar{\mathbf{H}}_0^{-1}\mathbf{e}_{0,j}(\ell_0)\\
    &~~~~ -\sum_{q=1}^\infty(-1)^q\left(\bar{\mathbf{H}}_0^{-1}\mathbf{E}_{0,i}(\ell_0)\right)^q\bar{\mathbf{H}}_0^{-1}\mathbf{e}_{0,i}(\ell_0).
\end{align*}

\subsubsection{Proof of success when $k=1$}
According to the triangular inequality for norms, we can bound
\begin{align*}
	\left\|\mathbf{e}_{0,i}(\ell_0) - \mathbf{e}_{0,j}(\ell_0)\right\| &\leq 2\left\|\mathbf{e}_0(\ell_0)\right\|\\
	\left\|\mathbf{E}_{0,i}(\ell_0) - \mathbf{E}_{0,j}(\ell_0)\right\| &\leq 2\left\|\mathbf{E}_0(\ell_0)\right\|_F.
\end{align*}

Using \eqref{ratio2}, we can bound the norm of the first term as
\begin{align}\label{first_term}
    \left\|\mathbf{D}_1(\ell_0)\right\|
    &\leq 2\left\|\bar{\mathbf{H}}_0^{-1}\right\|\left\|\mathbf{e}_0(\ell_0)\right\|
    \leq \frac{1}{2}\tilde{\xi}\lambda_\eta^{(\ell_0-\ell_{\min})}.
\end{align}
Similarly, the infinite sum in the second term is bounded as
\begin{align}\label{second_term}
    &\left\|\sum_{q=1}^\infty(-1)^q\left[\left(\bar{\mathbf{H}}_0^{-1}\mathbf{E}_{0,j}(\ell_0)\right)^q-\left(\bar{\mathbf{H}}_0^{-1}\mathbf{E}_{0,i}(\ell_0)\right)^q\right]\right\|\nonumber\\
    &\leq
    2\sum_{q=1}^\infty \left(\left\|\bar{\mathbf{H}}_0^{-1}\right\|\left\|\mathbf{E}_0(\ell_0)\right\|_F\right)^q\nonumber\\
    &\leq 2\sum_{q=1}^\infty \left(\frac{1}{4}\tilde{\xi}\lambda_\eta^{(\ell_0-\ell_{\min})}\right)^q
    =
    \frac{1}{2}\frac{\tilde{\xi}\lambda_\eta^{(\ell_0-\ell_{\min})}}{\left(1-\frac{1}{4}\tilde{\xi}\lambda_\eta^{(\ell_0-\ell_{\min})}\right)},
\end{align}
where the last equality comes from the convergence of geometric series $\lim_{K\rightarrow\infty}\sum_{k=1}^{K}a^k=a/(1-a)$ for any $|a|<1$. Since $0<\tilde{\xi}<\xi<1/2$ and $\lambda_\eta^{(\ell_0-\ell_{\min})}\leq 1$, then
\begin{align}\label{trick}
    \frac{\tilde{\xi}\lambda_\eta^{(\ell_0-\ell_{\min})}}{\left(1-\frac{1}{4}\tilde{\xi}\lambda_\eta^{(\ell_0-\ell_{\min})}\right)}<2\tilde{\xi}\lambda_\eta^{(\ell_0-\ell_{\min})}
\end{align}
and thus the norm of the second term is bounded as
\begin{align*}
    \left\|\mathbf{D}_2(\ell_0)\right\|
    &\leq \tilde{\xi}\lambda_\eta^{(\ell_0-\ell_{\min})} \left\|\bar{\mathbf{H}}_0^{-1}\right\|\left\|\bar{\mathbf{h}}_0\right\|\\
    &\leq  \frac{\sigma_{\max} \epsilon_{\max} }{\sigma_{\min}^2}\tilde{\xi}\lambda_\eta^{(\ell_0-\ell_{\min})}, 
\end{align*}
where the last inequality comes from \eqref{h_H_0_bound}.

Following the same rationale, the norm of the third term can be bounded as
\begin{align}
    \left\|\mathbf{D}_3(\ell_0)\right\|
    &\leq 2 \sum_{q=1}^\infty \left(\left\|\bar{\mathbf{H}}_0^{-1}\right\|\left\|\mathbf{E}_0(\ell_0)\right\|_F\right)^q\left\|\bar{\mathbf{H}}_0^{-1}\mathbf{e}_0(\ell_0)\right\|\nonumber\\
    &\leq 2 \sum_{q=1}^\infty \left(\left\|\bar{\mathbf{H}}_0^{-1}\right\|\left\|\mathbf{E}_0(\ell_0)\right\|_F\right)^{q+1}
\end{align}
which leads to
\begin{align}\label{third_term}
    \left\|\mathbf{D}_3(\ell_0)\right\|
    &\leq 2\sum_{q=1}^\infty \left(\frac{1}{4}\tilde{\xi}\lambda_\eta^{(\ell_0-\ell_{\min})}\right)^{q+1}\\
    &=
    \frac{\tilde{\xi}\lambda_\eta^{(\ell_0-\ell_{\min})}}{\left(1-\frac{1}{4}\tilde{\xi}\lambda_\eta^{(\ell_0-\ell_{\min})}\right)}\cdot
    \frac{1}{8}\tilde{\xi}\lambda_\eta^{(\ell_0-\ell_{\min})}\\
    &<\frac{1}{2}\tilde{\xi}\lambda_\eta^{(\ell_0-\ell_{\min})}.\nonumber
\end{align}
where the last inequality has used the results in \eqref{trick}. Note that this bound is very loose since we bound a second order term with the first order term.

Substituting $\tilde{\xi}=IC\xi/(\sigma_{\min}^2D)$ defined in \eqref{tildexi} back to \eqref{first_term}, \eqref{second_term}, \eqref{third_term} and summing them up, we have
\begin{align*}
	\left\|\mathbf{d}_i^0(\ell_0)-\mathbf{d}_j^0(\ell_0)\right\|
	&\leq
	\left(1+\frac{\sigma_{\max} \epsilon_{\max} }{\sigma_{\min}^2}\right)\frac{IC}{\sigma_{\min}^2D}\xi\lambda_\eta^{(\ell_0-\ell_{\min})}
\end{align*}
Introducing the constants $C_1$ and $C_2$ defined in \eqref{C_infty} and the inequality in \eqref{upperbound_disagreement_0}, we have
\begin{align}
	\left\|\mathbf{x}_i^1-\mathbf{x}_j^1\right\|
    \leq
	\xi \left(\frac{CC_1C_2}{D}\right)\lambda_\eta^{(\ell_0-\ell_{\min})}.
\end{align}
and therefore the result holds for $k=1$.

\subsection{Induction: $k=K$ and $k=K+1$}

Let the error bound hold for $k=K$ such that for any $i\neq j$
\begin{align}\label{bound_x_K}
	\left\|\mathbf{x}_i^{K}-\mathbf{x}_j^{K}\right\|
	\leq
    \xi \left(\frac{CC_1C_2}{D}\right)\sum_{k=0}^K\lambda_\eta^{(\ell_k-\ell_{\min})}.
\end{align}
with $C_1,C_2$ given in \eqref{C_infty}. The inequality below holds
\begin{align*}
	\left\|\mathbf{x}_i^{K+1}-\mathbf{x}_j^{K+1}\right\|
	&\leq \left\|\mathbf{x}_i^{K}-\mathbf{x}_j^{K}\right\|
		   +\left\|\mathbf{d}_i^{K}(\ell_K)-\mathbf{d}_j^{K}(\ell_K)\right\|,
\end{align*}
where
\begin{align}\label{H_E_k}
	\mathbf{d}_i^{K}(\ell_K)-\mathbf{d}_j^{K}(\ell_K)
	&=
	\left[\bar{\mathbf{H}}_{K} +\mathbf{E}_{K,i}(\ell_K) \right]^{-1}\left[\bar{\mathbf{h}}_{K}  +  \mathbf{e}_{K,i}(\ell_K)\right]\nonumber\\
	&-\left[\bar{\mathbf{H}}_{K} +\mathbf{E}_{K,j}(\ell_K) \right]^{-1}\left[\bar{\mathbf{h}}_{K}  +  \mathbf{e}_{K,j}(\ell_K)\right].
\end{align}
Similar to the case when $k=1$, if the perturbations $\mathbf{E}_{K,i}(\ell_K)$, $\mathbf{E}_{K,j}(\ell_K)$ are small enough, the expansion in \eqref{expansion} can be applied here to simplify the expression.

\subsubsection{Matrix series expansion}
By definition \eqref{definition_delta_Delta}, we have
\begin{align}\label{H_k}
	\left\|\bar{\mathbf{H}}_{K}^{-1}\right\|
	=\left\|\left[\mathbf{Q}(\mathbf{x}_i^{K})+\bdsb{\Delta}_{K,i}\right]^{-1}\right\|,
\end{align}
which is another perturbed inverse. Thus we first examine whether this inverse can be expanded using the series expansion in \eqref{expansion}. From \eqref{lipschitz_delta_Delta} and \eqref{bound_x_K}, we have
\begin{align}\label{Delta_bound}
	\left\|\bdsb{\Delta}_{K,i}\right\| &< \nu_{\Delta}  \xi \left(\frac{CC_1C_2}{D}\right)\sum_{k=0}^K\lambda_\eta^{(\ell_k-\ell_{\min})}\\
    &< \xi\left(\frac{\nu_{\Delta} C C_1 C_2}{D}\right)\lambda_\infty,
\end{align}
where the last inequality comes from the non-negativity of $\lambda_\eta$ (i.e., $
\lambda_\infty>\sum_{k=0}^K\lambda_\eta^{(\ell_k-\ell_{\min})}$ for all finite $K$). By the definition of $D$ in \eqref{denominator} in Assumption \ref{ell_ass}, we have
\begin{align}\label{inequality_RDelta}
	\left\|\mathbf{Q}^{-1}(\mathbf{x}_i^{K})\bdsb{\Delta}_{K,i}\right\|
	&\leq \left\|\mathbf{Q}^{-1}(\mathbf{x}_i^{K})\right\|\left\|\bdsb{\Delta}_{K,i}\right\|\\
    &\leq \underbrace{\frac{I}{\sigma_{\min}^2} \left(\frac{\nu_{\Delta}  C C_1 C_2}{D}\right)\lambda_\infty}_{<1,~\textrm{from}~\eqref{denominator}} \xi < \xi
<1/2,\nonumber
\end{align}
where we have used the fact that $\left\|\mathbf{Q}^{-1}(\mathbf{x}_i^K)\right\|\leq I/\sigma_{\min}^2$ (see \eqref{h_H_0_bound}). Therefore, the matrix series expansion holds for \eqref{H_k}. Then using the above calculations, we have
\begin{align}\label{H_k_bound}
	\left\|\bar{\mathbf{H}}_{K}^{-1}\right\|
	&\leq \left\|\mathbf{Q}^{-1}(\mathbf{x}_i^{K})\right\|\\
	&~~~~ + \sum_{q=1}^\infty\left(\left\|\mathbf{Q}^{-1}(\mathbf{x}_i^{K})\right\|\left\|\bdsb{\Delta}_{K,i}\right\|\right)^q\left\|\mathbf{Q}^{-1}(\mathbf{x}_i^{K})\right\|\nonumber\\
	&\leq \frac{I}{\sigma_{\min}^2} +  \frac{I}{\sigma_{\min}^2}\sum_{q=1}^\infty\xi^q\nonumber\\
       &= \frac{I}{\sigma_{\min}^2}\left(1+\frac{\xi}{1-\xi}\right)=\frac{I}{\sigma_{\min}^2}\frac{1}{1-\xi}<\frac{2I}{\sigma_{\min}^2}.\nonumber
\end{align}
Similar to the case with $k=1$, we have
\begin{align*}
	\left\|\bar{\mathbf{H}}_{K}^{-1} \mathbf{E}_{K,i}(\ell_K)\right\| &\leq \left\|\bar{\mathbf{H}}_{K}^{-1}\right\|\left\|\mathbf{E}_{K,i}(\ell_K)\right\|\leq  \left\|\bar{\mathbf{H}}_{K}^{-1}\right\|\left\|\mathbf{E}_K(\ell_K)\right\|_F\\
	\left\|\bar{\mathbf{H}}_{K}^{-1} \mathbf{E}_{K,j}(\ell_K)\right\| &\leq \left\|\bar{\mathbf{H}}_{K}^{-1}\right\|\left\|\mathbf{E}_{K,j}(\ell_K)\right\|\leq  \left\|\bar{\mathbf{H}}_{K}^{-1}\right\|\left\|\mathbf{E}_K(\ell_K)\right\|_F.
\end{align*}
From Lemma \ref{lem_gossip_error} and Assumption \ref{ell_ass}, the above bound can be further bounded using \eqref{H_k_bound} as
\begin{align*}
    \left\|\bar{\mathbf{H}}_{K}^{-1}\right\|\left\|\mathbf{E}_K(\ell_K)\right\|_F
	&\leq \frac{2I}{\sigma_{\min}^2} C \lambda_\eta^{\ell_K}
    =\lambda_\eta^{(\ell_K-\ell_{\min})}\frac{IC}{2\sigma_{\min}^2 D}\xi.
\end{align*}
For notation convenience, we again let $\tilde{\xi} = {IC\xi}/(\sigma_{\min}^2 D)$ in \eqref{tildexi} with $\tilde{\xi}<\xi<1/2$ and let $\delta\mathbf{Z}=\mathbf{E}_{K,i}(\ell_K)$ or $\mathbf{E}_{K,j}(\ell_K)$ and $\mathbf{Z}=\bar{\mathbf{H}}_K$. As a result, we have
\begin{align}\label{ratio4}
    \left\|\mathbf{Z}^{-1}\delta\mathbf{Z}\right\|&=
    \left\|\bar{\mathbf{H}}_K^{-1}\right\|\left\|\mathbf{E}_K(\ell_K)\right\|_F\\
	&\leq \frac{1}{2}\lambda_\eta^{(\ell_K-\ell_{\min})} \tilde{\xi} < \frac{1}{4}.
\end{align}
Therefore, the matrix expansion holds. By grouping all the high order terms
$q\geq 1$ in the matrix expansion, we have
\begin{align*}
	&\mathbf{d}_i^K(\ell_K)-\mathbf{d}_j^K(\ell_K)\\
	&=
    \left[\bar{\mathbf{H}}_K^{-1}-\sum_{q=1}^\infty (-1)^q\left(\bar{\mathbf{H}}_K^{-1}\mathbf{E}_{K,i}(\ell_K)\right)^q\bar{\mathbf{H}}_K^{-1}\right]
	\left[\bar{\mathbf{h}}_K+\mathbf{e}_{K,i}(\ell_K)\right]\\
    & -\left[\bar{\mathbf{H}}_K^{-1}-\sum_{q=1}^\infty (-1)^q\left(\bar{\mathbf{H}}_K^{-1}\mathbf{E}_{K,j}(\ell_K)\right)^q\bar{\mathbf{H}}_K^{-1}\right]
	\left[\bar{\mathbf{h}}_K+\mathbf{e}_{K,j}(\ell_K)\right].
\end{align*}
To simplify the above expression, we write it in three terms $\mathbf{D}_1(\ell_K)$, $\mathbf{D}_2(\ell_K)$ and $\mathbf{D}_3(\ell_K)$ as follows
\begin{align}
    \mathbf{d}_i^K(\ell_K)-\mathbf{d}_j^K(\ell_K)
    = \mathbf{D}_1(\ell_K) + \mathbf{D}_2(\ell_K) + \mathbf{D}_3(\ell_K),
\end{align}
where $\mathbf{D}_1(\ell_K) \triangleq \bar{\mathbf{H}}_K^{-1}\left[\mathbf{e}_{K,i}(\ell_K)-\mathbf{e}_{K,j}(\ell_K)\right]$ and
\begin{align*}
	\mathbf{D}_2(\ell_K)
    &\triangleq
    \sum_{q=1}^\infty(-1)^q\left(\bar{\mathbf{H}}_K^{-1}\mathbf{E}_{K,j}(\ell_K)\right)^q\bar{\mathbf{H}}_K^{-1}\bar{\mathbf{h}}_K\\
    &~~~-\sum_{q=1}^\infty(-1)^q\left(\bar{\mathbf{H}}_K^{-1}\mathbf{E}_{K,i}(\ell_K)\right)^q\bar{\mathbf{H}}_K^{-1}\bar{\mathbf{h}}_K\nonumber\\
    \mathbf{D}_3(\ell_K) &\triangleq \sum_{q=1}^\infty(-1)^q\left(\bar{\mathbf{H}}_K^{-1}\mathbf{E}_{K,j}(\ell_K)\right)^q\bar{\mathbf{H}}_K^{-1}\mathbf{e}_{K,j}(\ell_K)\\
    &~~~~~ -\sum_{q=1}^\infty(-1)^q\left(\bar{\mathbf{H}}_K^{-1}\mathbf{E}_{K,i}(\ell_K)\right)^q\bar{\mathbf{H}}_K^{-1}\mathbf{e}_{K,i}(\ell_K).
\end{align*}

\subsubsection{Proof of success when $k=K+1$}
According to the triangular inequality for norms, we can bound
\begin{align*}
	\left\|\mathbf{e}_{K,i}(\ell_K) - \mathbf{e}_{K,j}(\ell_K)\right\| &\leq
2\left\|\mathbf{e}_K(\ell_K)\right\|\\
	\left\|\mathbf{E}_{K,i}(\ell_K) - \mathbf{E}_{K,j}(\ell_K)\right\| &\leq 2\left\|\mathbf{E}_K(\ell_K)\right\|_F
\end{align*}

Using \eqref{ratio4}, we can bound the norm of the first term as
\begin{align}\label{first_term_K}
    \left\|\mathbf{D}_1(\ell_K)\right\|
    &\leq 2\left\|\bar{\mathbf{H}}_K^{-1}\right\|\left\|\mathbf{e}_K(\ell_K)\right\|
    \leq \tilde{\xi}\lambda_\eta^{(\ell_K-\ell_{\min})}.
\end{align}
Similarly, the infinite sum in the second term is bounded as
\begin{align}\label{second_term_K}
    &\left\|\sum_{q=1}^\infty(-1)^q\left[\left(\bar{\mathbf{H}}_K^{-1}\mathbf{E}_{K,j}(\ell_K)\right)^q-\left(\bar{\mathbf{H}}_K^{-1}\mathbf{E}_{K,i}(\ell_K)\right)^q\right]\right\|\nonumber\\
    &\leq
    2\sum_{q=1}^\infty \left(\left\|\bar{\mathbf{H}}_K^{-1}\right\|\left\|\mathbf{E}_K(\ell_K)\right\|_F\right)^q\\
    &\leq 2\sum_{q=1}^\infty \left(\frac{1}{2}\tilde{\xi}\lambda_\eta^{(\ell_K-\ell_{\min})}\right)^q
    =
    \frac{\tilde{\xi}\lambda_\eta^{(\ell_K-\ell_{\min})}}{\left(1-\frac{1}{2}\tilde{\xi}\lambda_\eta^{(\ell_K-\ell_{\min})}\right)},\nonumber
\end{align}
where the last equality comes from the convergence of geometric series $\lim_{K\rightarrow\infty}\sum_{k=1}^{K}a^k=a/(1-a)$ for any $|a|<1$. Since $0<\tilde{\xi}<\xi<1/2$ and $\lambda_\eta^{(\ell_K-\ell_{\min})}<1$, then
\begin{align}\label{trick_K}
    \frac{\tilde{\xi}\lambda_\eta^{(\ell_K-\ell_{\min})}}{\left(1-\frac{1}{2}\tilde{\xi}\lambda_\eta^{(\ell_K-\ell_{\min})}\right)}<2\tilde{\xi}\lambda_\eta^{(\ell_K-\ell_{\min})}
\end{align}
and thus the norm of the second term is bounded as
\begin{align}
    \left\|\mathbf{D}_2(\ell_K)\right\|
    &\leq 2\tilde{\xi}\lambda_\eta^{(\ell_K-\ell_{\min})} \left\|\bar{\mathbf{H}}_K^{-1}\right\|\left\|\bar{\mathbf{h}}_K\right\|\\
    &\leq  \frac{2\sigma_{\max} \epsilon_{\max} }{\sigma_{\min}^2}\tilde{\xi}\lambda_\eta^{(\ell_K-\ell_{\min})}, 
\end{align}
where the last inequality comes from \eqref{h_H_0_bound}. Following the same rationale, the norm of the third term can be bounded as
\begin{align}\label{third_term_K}
    \left\|\mathbf{D}_3(\ell_K)\right\|
    &\leq 2 \sum_{q=1}^\infty \left(\left\|\bar{\mathbf{H}}_K^{-1}\right\|\left\|\mathbf{E}_K(\ell_K)\right\|_F\right)^q\left\|\bar{\mathbf{H}}_K^{-1}\mathbf{e}_K(\ell_K)\right\|\nonumber\\
    &\leq 2 \sum_{q=1}^\infty \left(\left\|\bar{\mathbf{H}}_K^{-1}\right\|\left\|\mathbf{E}_K(\ell_K)\right\|_F\right)^{q+1}\nonumber\\
    &\leq 2\sum_{q=1}^\infty \left(\frac{1}{2}\tilde{\xi}\lambda_\eta^{(\ell_K-\ell_{\min})}\right)^{q+1}\nonumber\\
    &=
    \frac{\tilde{\xi}\lambda_\eta^{(\ell_K-\ell_{\min})}}{\left(1-\frac{1}{2}\tilde{\xi}\lambda_\eta^{(\ell_K-\ell_{\min})}\right)}\cdot
    \frac{1}{2}\tilde{\xi}\lambda_\eta^{(\ell_K-\ell_{\min})}
    <\tilde{\xi}\lambda_\eta^{(\ell_K-\ell_{\min})}.\nonumber
\end{align}
where the last inequality has used the results in \eqref{trick_K}. Note that this is again a very loose bound.

Substituting $\tilde{\xi}=IC\xi/(\sigma_{\min}^2D)$ in \eqref{tildexi} back to \eqref{first_term_K}, \eqref{second_term_K} and \eqref{third_term_K} and using the constants $C_1$ and $C_2$, we have
\begin{align*}
	\left\|\mathbf{d}_i^K(\ell_K)-\mathbf{d}_j^K(\ell_K)\right\|
	&\leq
\xi \left(\frac{CC_1C_2}{D}\right) \lambda_\eta^{(\ell_K-\ell_{\min})}
\end{align*}
Similarly, based on \eqref{C_infty} and \eqref{upperbound_disagreement_0}, we have
\begin{align*}
	&\left\|\mathbf{x}_i^{K+1}-\mathbf{x}_j^{K+1}\right\|\\
    &\leq
    \left\|\mathbf{x}_i^K-\mathbf{x}_j^K\right\|+\left\|\mathbf{d}_i^K(\ell_K)-\mathbf{d}_j^K(\ell_K)\right\|\\
    &\leq
	  \xi \left(\frac{CC_1C_2}{D}\right) \sum_{k=0}^{K-1}\lambda_\eta^{(\ell_k-\ell_{\min})}
      + \xi \left(\frac{CC_1C_2}{D}\right)\lambda_\eta^{(\ell_K-\ell_{\min})}\\
    &= \xi \left(\frac{CC_1C_2}{D}\right) \sum_{k=0}^K\lambda_\eta^{(\ell_k-\ell_{\min})},
\end{align*}
and therefore given that the recursion holds for $k=K$, it holds true for $k=K+1$. The induction is complete. Given \eqref{denominator}, we have $\xi \leq 4D \lambda_\eta^{(\ell_{\min}+1)}$, and
\begin{align}
	\left\|\mathbf{x}_i^{K+1}-\mathbf{x}_j^{K+1}\right\|
    &\leq
    4 CC_1C_2\sum_{k=0}^K\lambda_\eta^{\ell_k+1}.
\end{align}

\section{Proof of Theorem \ref{proposition_discrepancy}}\label{proof_jroposition_discrepancy}
By the decomposition in \eqref{h_H_decomp}, we have
\begin{align}\label{discrepancy}
	&\mathbf{d}_i^k(\ell_k) - \mathbf{d}_i^k\\
	&=\left[\mathbf{Q}(\mathbf{x}_i^k)+\bdsb{\Delta}_{k,i} +\mathbf{E}_{k,i}(\ell_k) \right]^{-1}
	\left[\mathbf{q}(\mathbf{x}_i^k) +\bdsb{\delta}_{k,i}  +  \mathbf{e}_{k,i}(\ell_k)\right] \nonumber\\
	&~~~ - \mathbf{Q}(\mathbf{x}_i^k)^{-1}\mathbf{q}(\mathbf{x}_i^k).\nonumber
\end{align}
Now that we verify that the matrix series expansion holds for similar approximations. First of all, from Lemma \ref{lem_disagreement} and in particular \eqref{Delta_bound}, we have $\left\|\bdsb{\Delta}_{k,i}(\ell_k)\right\|\leq \nu_{\Delta}C_1\lambda_\infty\xi/D$. The expansion depends on the quantity
\begin{align*}
	&\left\|\mathbf{Q}^{-1}(\mathbf{x}_i^k)\left(\bdsb{\Delta}_{k,i}+\mathbf{E}_{k,i}(\ell_k)\right)\right\|\\
	&\leq \left\|\mathbf{Q}^{-1}(\mathbf{x}_i^k)\right\|\left\|\bdsb{\Delta}_{k,i}\right\| + \left\|\mathbf{Q}^{-1}(\mathbf{x}_i^k)\right\|\left\|\mathbf{E}_{K,i}(\ell_k)\right\|.
\end{align*}
Using the derivation in \eqref{inequality_RDelta} and $C_1,C_2$ in \eqref{C_infty}, we have
\begin{align}\label{R_bound}
	&\left\|\mathbf{Q}^{-1}(\mathbf{x}_i^k) \left(\bdsb{\Delta}_{k,i}+\mathbf{E}_{k,i}(\ell_k)\right)\right\|\nonumber\\
	&<C_2\left(\frac{\nu_{\Delta}  C C_1 C_2}{D}\right)\lambda_\infty\xi + \left(\frac{CC_2}{4D}\right)\xi\lambda_\eta^{(\ell_k-\ell_0)}\nonumber\\
    &= \frac{CC_2}{D} \left(\nu_\Delta C_1C_2 \lambda_\infty+\frac{1}{4}\lambda_\eta^{(\ell_k-\ell_0)}\right)\xi\nonumber\\
    &<\underbrace{\frac{CC_2\left(\nu\lambda_\infty C_1C_2+1\right)}{D}}_{=1,~\textrm{from}~\eqref{denominator}}\xi<\xi<\frac{1}{2},
\end{align}
where the last inequality is by the definition of $D$ in \eqref{denominator}. Then \eqref{discrepancy} can be re-written as
\begin{align*}
	&\mathbf{d}_i^k(\ell_k) - \mathbf{d}_i^k\\
	&=
	\left[\mathbf{Q}^{-1}(\mathbf{x}_i^k)-\sum_{q=1}^\infty\left(\mathbf{Q}^{-1}(\mathbf{x}_i^k)(\bdsb{\Delta}_{k,i} +\mathbf{E}_{k,i}(\ell_k))\right)^q\mathbf{Q}^{-1}(\mathbf{x}_i^k)\right]\\
    &~~~~ \times\left[\mathbf{q}(\mathbf{x}_i^k) +\bdsb{\delta}_{k,i}  +  \mathbf{e}_{k,i}(\ell_k)\right] - \mathbf{Q}^{-1}(\mathbf{x}_i^k)\mathbf{q}(\mathbf{x}_i^k)\\
    &=\mathbf{Q}^{-1}(\mathbf{x}_i^k)\left[\bdsb{\delta}_{k,i}  +  \mathbf{e}_{k,i}(\ell_k)\right]\\
    &~~~~ -\sum_{q=1}^\infty\left(\mathbf{Q}^{-1}(\mathbf{x}_i^k)(\bdsb{\Delta}_{k,i} +\mathbf{E}_{k,i}(\ell_k))\right)^q\mathbf{Q}^{-1}(\mathbf{x}_i^k)\mathbf{q}(\mathbf{x}_i^k)\\
    &~~~~ -\sum_{q=1}^\infty\left(\mathbf{Q}^{-1}(\mathbf{x}_i^k)(\bdsb{\Delta}_{k,i} +\mathbf{E}_{k,i}(\ell_k))\right)^q\mathbf{Q}^{-1}(\mathbf{x}_i^k)\left[\bdsb{\delta}_{k,i}  +  \mathbf{e}_{k,i}(\ell_k)\right]
\end{align*}
According to Lemma \ref{lem_disagreement} and Assumption \ref{ell_ass}, we have $\left\|\bdsb{\delta}_{k,i}(\ell_k)\right\|\leq \nu_{\delta} C C_1 C_2\lambda_\infty\xi/D$, and the norm of the first term above can be bounded similarly as \eqref{R_bound}
\begin{align}
    \left\|\mathbf{Q}^{-1}(\mathbf{x}_i^k)\left[\bdsb{\delta}_{k,i}  +  \mathbf{e}_{k,i}(\ell_k)\right]\right\|
    &<\xi.
\end{align}
Likewise, the norm of the second term is bounded as
\begin{align*}
    &\left\|\sum_{q=1}^\infty\left(\mathbf{Q}^{-1}(\mathbf{x}_i^k)(\bdsb{\Delta}_{k,i} +\mathbf{E}_{k,i}(\ell_k))\right)^q\mathbf{Q}^{-1}(\mathbf{x}_i^k)\mathbf{q}(\mathbf{x}_i^k)\right\|\\
    &\leq \sum_{q=1}^\infty\left(\left\|\mathbf{Q}^{-1}(\mathbf{x}_i^k)(\bdsb{\Delta}_{k,i} +\mathbf{E}_{k,i}(\ell_k))\right\|\right)^q\left\|\mathbf{Q}^{-1}(\mathbf{x}_i^k)\mathbf{q}(\mathbf{x}_i^k)\right\|\\
    &<\frac{\sigma_{\max} \epsilon_{\max} }{\sigma_{\min}^2}\sum_{q=1}^\infty \xi^q = \frac{\sigma_{\max} \epsilon_{\max} }{\sigma_{\min}^2} \frac{\xi}{1-\xi}
    <2\frac{\sigma_{\max} \epsilon_{\max} }{\sigma_{\min}^2}\xi,
\end{align*}
and similarly for the third term
\begin{align*}
    &\left\|\sum_{q=1}^\infty\left(\mathbf{Q}^{-1}(\mathbf{x}_i^k)(\bdsb{\Delta}_{k,i} +\mathbf{E}_{k,i}(\ell_k))\right)^q\mathbf{Q}^{-1}(\mathbf{x}_i^k)\left[\bdsb{\delta}_{k,i}  +  \mathbf{e}_{k,i}(\ell_k)\right]\right\|\\
    &<\sum_{q=1}^\infty \left\|\mathbf{Q}^{-1}(\mathbf{x}_i^k)(\bdsb{\Delta}_{k,i} +\mathbf{E}_{k,i}(\ell_k))\right\|^{q+1}
    <\sum_{q=1}^\infty\xi^{q+1} = \frac{\xi^2}{1-\xi}.
\end{align*}
Furthermore, since $\xi\in(0,1/2)$, the above expression can be simplified as $\xi^2/(1-\xi)<2\xi$. Finally, summing them up and using the constant $C_1$ we have
\begin{align}
	\left\|\mathbf{d}_i^k(\ell_k) - \mathbf{d}_i^k\right\|\leq 2\left(1+\frac{\sigma_{\max} \epsilon_{\max} }{\sigma_{\min}^2}\right)\xi=C_1\xi
\end{align}
for all $i$ and $k$. Now we have established that the discrepancy between the decentralized descent and the exact descent can be bounded by an arbitrarily small error $\xi$ specified by the system. Given \eqref{denominator}, we have
\begin{align}
    \xi < 4D \lambda_\eta^{(\ell_{\min}+1)},
\end{align}
and therefore, the perturbation bound $\kappa$ on the error recursion in Lemma \ref{lem_error_recursion} can be obtained as
\begin{align}
    \kappa \triangleq 4C_1 D \lambda_\eta^{(\ell_{\min}+1)}.
\end{align}

\bibliographystyle{IEEEtran}


\bibliography{../ref_general,../ref_power_system_SE,../ref_dist_opt}

\end{document}